%
%================================================================
\documentclass[12pt]{amsart}       %was 12pt
\usepackage{txfonts}
\usepackage{amssymb}
\usepackage{eucal}
\usepackage{graphicx}
\usepackage{amsmath}
\usepackage{amscd}
\usepackage[all]{xy}           %xypic macro for latex2.09
\usepackage{amsfonts,latexsym}
\usepackage{xspace}
\usepackage{epsfig}
\usepackage{float}
\usepackage{color}
\usepackage{fancybox}
\usepackage{colordvi}
\usepackage{multicol}
\usepackage[active]{srcltx} %SRC Specials for DVI Searching

\usepackage{tikz}

\usepackage{graphicx}

\ifpdf
\usepackage[colorlinks,final,backref=page,hyperindex]{hyperref}
%\else
%\usepackage[colorlinks,final,backref=page,hyperindex,hypertex]{hyperref}
%\fi
%\usepackage[active]{srcltx} %SRC Specials for DVI Searching

%======================================================================
    %was    1, 1.5 for double sp
%=====================================================================
\topmargin -.8cm \textheight 22.8cm \oddsidemargin 0cm \evensidemargin -0cm \textwidth 16.3cm
%========================================================================================%%wide

%%%%%%%%%%%%%%%%%%%%%%%% Statements
\newtheorem{theorem}{Theorem}[section]
\newtheorem{prop}[theorem]{Proposition}
\theoremstyle{definition}
\newtheorem{defn}[theorem]{Definition}
\newtheorem{lemma}[theorem]{Lemma}
\newtheorem{coro}[theorem]{Corollary}
\newtheorem{prop-def}{Proposition-Definition}[section]
\newtheorem{coro-def}{Corollary-Definition}[section]

\newtheorem{remark}[theorem]{Remark}

\newtheorem{exam}[theorem]{Example}

%==========================================================================

\newcommand{\nc}{\newcommand}
%==========================================================================
\nc{\tred}[1]{\textcolor{red}{#1}}
\nc{\tblue}[1]{\textcolor{blue}{#1}}
\nc{\tgreen}[1]{\textcolor{green}{#1}}
\nc{\tpurple}[1]{\textcolor{purple}{#1}}
\nc{\btred}[1]{\textcolor{red}{\bf #1}}
\nc{\btblue}[1]{\textcolor{blue}{\bf #1}}
\nc{\btgreen}[1]{\textcolor{green}{\bf #1}}
\nc{\btpurple}[1]{\textcolor{purple}{\bf #1}}
\nc{\NN}{{\mathbb N}}
\nc{\ncsha}{{\mbox{\cyr X}^{\mathrm NC}}} \nc{\ncshao}{{\mbox{\cyr
			X}^{\mathrm NC}_0}}

%=========================================================================

\newcommand{\efootnote}[1]{}
%========================================================================

\renewcommand{\textbf}[1]{}
%========================================================================

\newcommand{\delete}[1]{}

%\delete{
	\nc{\mlabel}[1]{\label{#1}}  % Use this to suppress names
	\nc{\mcite}[1]{\cite{#1}}  % Use this to suppress names
	\nc{\mref}[1]{\ref{#1}}  % Use this to suppress names
	\nc{\mbibitem}[1]{\bibitem{#1}} % Use this to show number
	%}

\delete{% Use the next two lines to show names
	\nc{\mlabel}[1]{\label{#1}{\hfill \hspace{1cm}{\bf{{\ }\hfill(#1)}}}}
	\nc{\mcite}[1]{\cite{#1}{{\bf{{\ }(#1)}}}}  % Use this lines to show names
	\nc{\mref}[1]{\ref{#1}{{\bf{{\ }(#1)}}}}  % Use this lines to show names
	\nc{\mbibitem}[1]{\bibitem[\bf #1]{#1}} % Use this to show name
}

%=========================================================================

%=========================================================================
%%%%%%%%%%%%%%%%%%%% new symbols
\nc{\opa}{\ast} \nc{\opb}{\odot} \nc{\op}{\bullet} \nc{\pa}{\frakL}
\nc{\arr}{\rightarrow} \nc{\lu}[1]{(#1)} \nc{\mult}{\mrm{mult}}
\nc{\diff}{\mathfrak{Diff}}
\nc{\opc}{\sharp}\nc{\opd}{\natural}
\nc{\ope}{\circ}
\nc{\dpt}{\mathrm{d}}
\nc{\hck}{H_{RT}}
\nc{\vdf}{\calf}
\nc{\ldf}{\calf_\ell}
\nc{\hlf}{H_\ell}
\nc{\onek}{\mathbf{1}_\bfk}
%%%%%%%%%%%%%%%%%%%%%%% symbols
\nc{\diam}{alternating\xspace}
\nc{\Diam}{Alternating\xspace}
\nc{\cdiam}{canonical alternating\xspace}
\nc{\Cdiam}{Canonical alternating\xspace}
\nc{\AW}{\mathcal{A}}

\nc{\ari}{\mathrm{ar}}

\nc{\lef}{\mathrm{lef}}

\nc{\Sh}{\mathrm{ST}}

\nc{\Cr}{\mathrm{Cr}}

\nc{\st}{{Schr\"oder tree}\xspace}
\nc{\sts}{{Schr\"oder trees}\xspace}

%%%%%%%%%%%%%%%%%%%%%%%%%%%%%%%%%%%%%%%%%%%%%
\nc{\vertset}{\Omega} % set of vertex decorations

\nc{\assop}{\quad \begin{picture}(5,5)(0,0)
		\line(-1,1){10}
		\put(-2.2,-2.2){$\bullet$}
		\line(0,-1){10}\line(1,1){10}
	\end{picture} \quad \smallskip}

\nc{\operator}{\begin{picture}(5,5)(0,0)
		\line(0,-1){6}
		\put(-2.6,-1.8){$\bullet$}
		\line(0,1){9}
\end{picture}}

\nc{\idx}{\begin{picture}(6,6)(-3,-3)
		\put(0,0){\line(0,1){6}}
		\put(0,0){\line(0,-1){6}}
\end{picture}}

\nc{\pb}{{\mathrm{pb}}}
\nc{\Lf}{{\mathrm{Lf}}}

\nc{\lft}{{left tree}\xspace}
\nc{\lfts}{{left trees}\xspace}

\nc{\fat}{{fundamental averaging tree}\xspace}

\nc{\fats}{{fundamental averaging trees}\xspace}
\nc{\avt}{\mathrm{Avt}}

\nc{\rass}{{\mathit{RAss}}}

\nc{\aass}{{\mathit{AAss}}}

\nc{\vin}{{\mathrm Vin}}    %decoration set of indices
\nc{\lin}{{\mathrm Lin}}    %decoration set of leaves
\nc{\inv}{\mathrm{I}n}
\nc{\gensp}{V} % space of generators
\nc{\genbas}{\mathcal{V}} % basis of the space of generators
\nc{\bvp}{V_P}     % Rota-Baxter generating space
\nc{\gop}{{\,\omega\,}}     % generic binary operation

\nc{\bin}[2]{ (_{\stackrel{\scs{#1}}{\scs{#2}}})}  %binomial coeff
\nc{\binc}[2]{ \left (\!\! \begin{array}{c} \scs{#1}\\
		\scs{#2} \end{array}\!\! \right )}  %binomial coeff
\nc{\bincc}[2]{  \left ( {\scs{#1} \atop
		\vspace{-1cm}\scs{#2}} \right )}  %binomial coeff
\nc{\bs}{\bar{S}} \nc{\cosum}{\sqsubset} \nc{\la}{\longrightarrow}
\nc{\rar}{\rightarrow} \nc{\dar}{\downarrow} \nc{\dprod}{**}
\nc{\dap}[1]{\downarrow \rlap{$\scriptstyle{#1}$}}
\nc{\md}{\mathrm{dth}} \nc{\uap}[1]{\uparrow
	\rlap{$\scriptstyle{#1}$}} \nc{\defeq}{\stackrel{\rm def}{=}}
\nc{\disp}[1]{\displaystyle{#1}} \nc{\dotcup}{\
	\displaystyle{\bigcup^\bullet}\ } \nc{\gzeta}{\bar{\zeta}}
\nc{\hcm}{\ \hat{,}\ } \nc{\hts}{\hat{\otimes}}
\nc{\barot}{{\otimes}} \nc{\free}[1]{\bar{#1}}
\nc{\uni}[1]{\tilde{#1}} \nc{\hcirc}{\hat{\circ}} \nc{\lleft}{[}
\nc{\lright}{]} \nc{\lc}{\lfloor} \nc{\rc}{\rfloor}
\nc{\curlyl}{\left \{ \begin{array}{c} {} \\ {} \end{array}
	\right .  \!\!\!\!\!\!\!}
\nc{\curlyr}{ \!\!\!\!\!\!\!
	\left . \begin{array}{c} {} \\ {} \end{array}
	\right \} }
\nc{\longmid}{\left | \begin{array}{c} {} \\ {} \end{array}
	\right . \!\!\!\!\!\!\!}
\nc{\onetree}{\bullet} \nc{\ora}[1]{\stackrel{#1}{\rar}}
\nc{\ola}[1]{\stackrel{#1}{\la}}%${\Bbb Z}$
\nc{\ot}{\otimes} \nc{\mot}{{{\boxtimes\,}}}
\nc{\otm}{\overline{\boxtimes}} \nc{\sprod}{\bullet}
\nc{\scs}[1]{\scriptstyle{#1}} \nc{\mrm}[1]{{\rm #1}}
\nc{\margin}[1]{\marginpar{\rm #1}}   %{\rm #1}}
\nc{\dirlim}{\displaystyle{\lim_{\longrightarrow}}\,}
\nc{\invlim}{\displaystyle{\lim_{\longleftarrow}}\,}
\nc{\mvp}{\vspace{0.3cm}} \nc{\tk}{^{(k)}} \nc{\tp}{^\prime}
\nc{\ttp}{^{\prime\prime}} \nc{\svp}{\vspace{2cm}}
\nc{\vp}{\vspace{8cm}} \nc{\proofbegin}{\noindent{\bf Proof: }}
\nc{\proofend}{$\blacksquare$ \vspace{0.3cm}}
\nc{\modg}[1]{\!<\!\!{#1}\!\!>}
\nc{\intg}[1]{F_C(#1)} \nc{\lmodg}{\!
<\!\!} \nc{\rmodg}{\!\!>\!}
\nc{\cpi}{\widehat{\Pi}}
\nc{\sha}{{\mbox{\cyr X}}}  %used to be \cyr
\nc{\shap}{{\mbox{\cyrs X}}} %sha as product
\nc{\shpr}{\diamond}    %Shuffle product
\nc{\shp}{\ast} \nc{\shplus}{\shpr^+}
\nc{\shprc}{\shpr_c}    %Cartier's product
\nc{\msh}{\ast} \nc{\zprod}{m_0} \nc{\oprod}{m_1}
\nc{\vep}{\epsilon} \nc{\labs}{\mid\!} \nc{\rabs}{\!\mid}
\nc{\sqmon}[1]{\langle #1\rangle}
%==========================================================================

%==========================================================================
%%%%%%%%%%%%%%%%%%%% roman fonts, in alphabetic order
\nc{\mmbox}[1]{\mbox{\ #1\ }} \nc{\dep}{\mrm{dep}} \nc{\fp}{\mrm{FP}}
\nc{\rchar}{\mrm{char}} \nc{\End}{\mrm{End}} \nc{\Fil}{\mrm{Fil}}
\nc{\Mor}{Mor\xspace} \nc{\gmzvs}{gMZV\xspace}
\nc{\gmzv}{gMZV\xspace} \nc{\mzv}{MZV\xspace}
\nc{\mzvs}{MZVs\xspace} \nc{\Hom}{\mrm{Hom}} \nc{\id}{\mrm{id}}
\nc{\im}{\mrm{im}} \nc{\incl}{\mrm{incl}} \nc{\map}{\mrm{Map}}
\nc{\mchar}{\rm char} \nc{\nz}{\rm NZ} \nc{\supp}{\mathrm Supp}

%=======================================================================
%%%%%%%%%%%%%%%%%% bold face
\nc{\Alg}{\mathbf{Alg}} \nc{\Bax}{\mathbf{Bax}} \nc{\bff}{\mathbf f}
\nc{\bfk}{{\bf k}} \nc{\bfone}{{\bf 1}} \nc{\bfx}{\mathbf x}
\nc{\bfy}{\mathbf y}
\nc{\base}[1]{\bfone^{\otimes ({#1}+1)}} %{{a_{#1}}}
\nc{\Cat}{\mathbf{Cat}}

\nc{\detail}{\marginpar{\bf More detail}
\noindent{\bf Need more detail!}
\svp}
\nc{\Int}{\mathbf{Int}} \nc{\Mon}{\mathbf{Mon}}
\nc{\rbtm}{{shuffle }} \nc{\rbto}{{Rota-Baxter }}
\nc{\remarks}{\noindent{\bf Remarks: }} \nc{\Rings}{\mathbf{Rings}}
\nc{\Sets}{\mathbf{Sets}} \nc{\wtot}{\widetilde{\odot}}
\nc{\wast}{\widetilde{\ast}} \nc{\bodot}{\bar{\odot}}
\nc{\bast}{\bar{\ast}} \nc{\hodot}[1]{\odot^{#1}}
\nc{\hast}[1]{\ast^{#1}} \nc{\mal}{\mathcal{O}}
\nc{\tet}{\tilde{\ast}} \nc{\teot}{\tilde{\odot}}
\nc{\oex}{\overline{x}} \nc{\oey}{\overline{y}}
\nc{\oez}{\overline{z}} \nc{\oef}{\overline{f}}
\nc{\oea}{\overline{a}} \nc{\oeb}{\overline{b}}
\nc{\weast}[1]{\widetilde{\ast}^{#1}}
\nc{\weodot}[1]{\widetilde{\odot}^{#1}} \nc{\hstar}[1]{\star^{#1}}
\nc{\lae}{\langle} \nc{\rae}{\rangle}
\nc{\lf}{\lfloor}
\nc{\rf}{\rfloor}
\nc{\dpl}{\varepsilon_{RT}}
%=======================================================================

%%%%%%%%%%%%%%%%%%%%%% math bb fonts

\nc{\QQ}{{\mathbb Q}}
\nc{\RR}{{\mathbb R}} \nc{\ZZ}{{\mathbb Z}}

%==========================================================================
%%%%%%%%%%%%%%%%%%% cal fonts

\nc{\cala}{{\mathcal A}} \nc{\calb}{{\mathcal B}}
\nc{\calc}{{\mathcal C}}
\nc{\cald}{{\mathcal D}} \nc{\cale}{{\mathcal E}}
\nc{\calf}{{\mathcal F}} \nc{\calg}{{\mathcal G}}
\nc{\calh}{{\mathcal H}} \nc{\cali}{{\mathcal I}}
\nc{\call}{{\mathcal L}} \nc{\calm}{{\mathcal M}}
\nc{\caln}{{\mathcal N}} \nc{\calo}{{\mathcal O}}
\nc{\calp}{{\mathcal P}} \nc{\calr}{{\mathcal R}}
\nc{\cals}{{\mathcal S}} \nc{\calt}{{\mathcal T}}
\nc{\calu}{{\mathcal U}} \nc{\calw}{{\mathcal W}} \nc{\calk}{{\mathcal K}}
\nc{\calx}{{\mathcal X}} \nc{\CA}{\mathcal{A}}

%==========================================================================
%%%%%%%%%%%%%%%%%%  frak fonts
\nc{\fraka}{{\mathfrak a}} \nc{\frakA}{{\mathfrak A}}
\nc{\frakb}{{\mathfrak b}} \nc{\frakB}{{\mathfrak B}}
\nc{\frakD}{{\mathfrak D}} \nc{\frakF}{\mathfrak{F}}
\nc{\frakf}{{\mathfrak f}} \nc{\frakg}{{\mathfrak g}}
\nc{\frakH}{{\mathfrak H}} \nc{\frakL}{{\mathfrak L}}
\nc{\frakM}{{\mathfrak M}} \nc{\bfrakM}{\overline{\frakM}}
\nc{\frakm}{{\mathfrak m}} \nc{\frakP}{{\mathfrak P}}
\nc{\frakN}{{\mathfrak N}} \nc{\frakp}{{\mathfrak p}}
\nc{\frakS}{{\mathfrak S}} \nc{\frakT}{\mathfrak{T}}
\nc{\frakX}{{\mathfrak X}}
\nc{\BS}{\mathbb{S
}}

\font\cyr=wncyr10 \font\cyrs=wncyr7
%=========================================================================
\nc{\li}[1]{\textcolor{red}{Li:#1}}
\nc{\yi}[1]{\textcolor{blue}{Yi: #1}}
\nc{\xing}[1]{\textcolor{purple}{Xing:#1}}
\nc{\revise}[1]{\textcolor{red}{#1}}

%%%%%%%%%
\nc{\ID}{{\rm I}}\nc{\lbar}[1]{\overline{#1}}\nc{\bre}{{\rm bre}}
\nc{\sd}{\cals}\nc{\rb}{\rm RB}\nc{\A}{\rm A}\nc{\LL}{\rm L}\nc{\tx}{\tilde{X}}
\nc{\col}{\Delta_{\lambda,\mu}}\nc{\mul}{m_{\mathrm{RT}}}\nc{\ul}{u_{RT}}\nc{\epl}{\epsilon_{RT}}
\nc{\hl}{H_{RT}}\nc{\arro}[1]{#1}\nc{\px}{P_{\tx}}\nc{\pw}{P_{\mathfrak{w}}}\nc{\pl}{B^+}
\nc{\pp}{\pl}\nc{\ppp}[1]{B^+(#1)}\nc{\dw}{\diamond_{\mathfrak{w}}}\nc{\dl}{\diamond_{\rm \ell}}
\nc{\ncshaw}{\sha^{{\rm NC}}_{\mathfrak{w}}}\nc{\ncshal}{\sha^{{\rm NC}}_{{\rm \ell}}}
\nc{\ver}{\rm V}\nc{\ld}{l}\nc{\del}{\Delta_{{\rm \ell}}}\nc{\epsl}{\epsilon_{{\rm \ell}}}
\nc{\uul}{u_{{\rm \ell}}}\nc{\oneh}{\mathbf{1}}\nc{\onew}{\mathbf{1}}
\nc{\etree}{1} \nc{\conc}{m_{RT}} \nc{\subq}{\bfk Q_l} \nc{\fid}{\unlhd}  \nc{\sfid}{\lhd}
\nc{\lhl}{\leq_{h,l}} \nc{\ghl}{\geq_{hl}}
\nc{\RT}{\mathrm{RT}}

%the following new def are defined by yi
\nc{\hrtb}{\mathcal{H}_{RT}(X\sqcup\Omega)} \nc{\hrts}{\mathcal{H}_{\mathrm{RT}}(X, \Omega)}\nc{\rts}{\mathcal{T}(X, \Omega)}\nc{\rfs}{\mathcal{F}(X, \Omega)} \nc{\counit}{\varepsilon_{\mathrm{RT}}}
%=========================================================================

%the following picture are written by pro. L.Foissy
\newcommand{\tun}{\begin{picture}(5,0)(-2,-1)
	\put(0,0){\circle*{2}}
	\end{picture}}
	
	\newcommand{\tdeux}{\begin{picture}(7,7)(0,-1)
	\put(3,0){\circle*{2}}
	\put(3,0){\line(0,1){5}}
	\put(3,5){\circle*{2}}
	\end{picture}}
	
	\newcommand{\ttroisun}{\begin{picture}(15,8)(-5,-1)
	\put(3,0){\circle*{2}}
	\put(-0.65,0){$\vee$}
	\put(6,7){\circle*{2}}
	\put(0,7){\circle*{2}}
	\end{picture}}
	\newcommand{\ttroisdeux}{\begin{picture}(5,12)(-2,-1)
	\put(0,0){\circle*{2}}
	\put(0,0){\line(0,1){5}}
	\put(0,5){\circle*{2}}
	\put(0,5){\line(0,1){5}}
	\put(0,10){\circle*{2}}
	\end{picture}}
	
	\newcommand{\tquatreun}{\begin{picture}(15,12)(-5,-1)
	\put(3,0){\circle*{2}}
	\put(-0.65,0){$\vee$}
	\put(6,7){\circle*{2}}
	\put(0,7){\circle*{2}}
	\put(3,7){\circle*{2}}
	\put(3,0){\line(0,1){7}}
	\end{picture}}
	\newcommand{\tquatredeux}{\begin{picture}(15,18)(-5,-1)
	\put(3,0){\circle*{2}}
	\put(-0.65,0){$\vee$}
	\put(6,7){\circle*{2}}
	\put(0,7){\circle*{2}}
	\put(0,14){\circle*{2}}
	\put(0,7){\line(0,1){7}}
	\end{picture}}
	\newcommand{\tquatretrois}{\begin{picture}(15,18)(-5,-1)
	\put(3,0){\circle*{2}}
	\put(-0.65,0){$\vee$}
	\put(6,7){\circle*{2}}
	\put(0,7){\circle*{2}}
	\put(6,14){\circle*{2}}
	\put(6,7){\line(0,1){7}}
	\end{picture}}
	\newcommand{\tquatrequatre}{\begin{picture}(15,18)(-5,-1)
	\put(3,5){\circle*{2}}
	\put(-0.65,5){$\vee$}
	\put(6,12){\circle*{2}}
	\put(0,12){\circle*{2}}
	\put(3,0){\circle*{2}}
	\put(3,0){\line(0,1){5}}
	\end{picture}}
	\newcommand{\tquatrecinq}{\begin{picture}(9,19)(-2,-1)
	\put(0,0){\circle*{2}}
	\put(0,0){\line(0,1){5}}
	\put(0,5){\circle*{2}}
	\put(0,5){\line(0,1){5}}
	\put(0,10){\circle*{2}}
	\put(0,10){\line(0,1){5}}
	\put(0,15){\circle*{2}}
	\end{picture}}

	\newcommand{\tcinqdeux}{\begin{picture}(15,14)(-5,-1)
	\put(3,0){\circle*{2}}
	\put(-0.65,0){$\vee$}
	\put(6,7){\circle*{2}}
	\put(0,7){\circle*{2}}
	\put(3,7){\circle*{2}}
	\put(3,0){\line(0,1){7}}
	\put(0,7){\line(0,1){7}}
	\put(0,14){\circle*{2}}
	\end{picture}}
	\newcommand{\tcinqtrois}{\begin{picture}(15,15)(-5,-1)
	\put(3,0){\circle*{2}}
	\put(-0.65,0){$\vee$}
	\put(6,7){\circle*{2}}
	\put(0,7){\circle*{2}}
	\put(3,7){\circle*{2}}
	\put(3,0){\line(0,1){7}}
	\put(3,7){\line(0,1){7}}
	\put(3,14){\circle*{2}}
	\end{picture}}
	\newcommand{\tcinqquatre}{\begin{picture}(15,14)(-5,-1)
	\put(3,0){\circle*{2}}
	\put(-0.65,0){$\vee$}
	\put(6,7){\circle*{2}}
	\put(0,7){\circle*{2}}
	\put(3,7){\circle*{2}}
	\put(3,0){\line(0,1){7}}
	\put(6,7){\line(0,1){7}}
	\put(6,14){\circle*{2}}
	\end{picture}}

	% Arbres décorés
	
	\newcommand{\tdun}[1]
	{\begin{picture}(10,5)(-2,-1)
	\put(0,0){\circle*{2}}
	\put(3,-2){\tiny #1}
	\end{picture}}
	
	\newcommand{\tddeux}[2]{\begin{picture}(12,5)(0,-1)
	\put(3,0){\circle*{2}}
	\put(3,0){\line(0,1){5}}
	\put(3,5){\circle*{2}}
	\put(6,-3){\tiny #1}
	\put(6,3){\tiny #2}
	\end{picture}}
	
	\newcommand{\tdtroisun}[3]{\begin{picture}(20,12)(-5,-1)
	\put(3,0){\circle*{2}}
	\put(-0.65,0){$\vee$}
	\put(6,7){\circle*{2}}
	\put(0,7){\circle*{2}}
	\put(5,-2){\tiny #1}
	\put(8,5){\tiny #2}
	\put(-6,5){\tiny #3}
	\end{picture}}
	\newcommand{\tdtroisdeux}[3]{\begin{picture}(12,12)(-2,-1)
	\put(0,0){\circle*{2}}
	\put(0,0){\line(0,1){5}}
	\put(0,5){\circle*{2}}
	\put(0,5){\line(0,1){5}}
	\put(0,10){\circle*{2}}
	\put(3,-2){\tiny #1}
	\put(3,3){\tiny #2}
	\put(3,9){\tiny #3}
	\end{picture}}
	
	\newcommand{\tdquatreun}[4]{\begin{picture}(20,12)(-5,-1)
	\put(3,0){\circle*{2}}
	\put(-0.6,0){$\vee$}
	\put(6,7){\circle*{2}}
	\put(0,7){\circle*{2}}
	\put(3,7){\circle*{2}}
	\put(3,0){\line(0,1){7}}
	\put(5,-2){\tiny #1}
	\put(8.5,5){\tiny #2}
	\put(1,10){\tiny #3}
	\put(-5,5){\tiny #4}
	\end{picture}}
	\newcommand{\tdquatredeux}[4]{\begin{picture}(20,20)(-5,-1)
	\put(3,0){\circle*{2}}
	\put(-.65,0){$\vee$}
	\put(6,7){\circle*{2}}
	\put(0,7){\circle*{2}}
	\put(0,14){\circle*{2}}
	\put(0,7){\line(0,1){7}}
	\put(5,-2){\tiny #1}
	\put(9,5){\tiny #2}
	\put(-6,5){\tiny #3}
	\put(-6,12){\tiny #4}
	\end{picture}}
	\newcommand{\tdquatretrois}[4]{\begin{picture}(20,20)(-5,-1)
	\put(3,0){\circle*{2}}
	\put(-.65,0){$\vee$}
	\put(6,7){\circle*{2}}
	\put(0,7){\circle*{2}}
	\put(6,14){\circle*{2}}
	\put(6,7){\line(0,1){7}}
	\put(5,-2){\tiny #1}
	\put(8,5){\tiny #2}
	\put(-6,5){\tiny #4}
	\put(8,12){\tiny #3}
	\end{picture}}
	\newcommand{\tdquatrequatre}[4]{\begin{picture}(20,14)(-5,-1)
	\put(3,5){\circle*{2}}
	\put(-.65,5){$\vee$}
	\put(6,12){\circle*{2}}
	\put(0,12){\circle*{2}}
	\put(3,0){\circle*{2}}
	\put(3,0){\line(0,1){5}}
	\put(6,-3){\tiny #1}
	\put(6,4){\tiny #2}
	\put(9,12){\tiny #3}
	\put(-5,12){\tiny #4}
	\end{picture}}
	\newcommand{\tdquatrecinq}[4]{\begin{picture}(12,19)(-2,-1)
	\put(0,0){\circle*{2}}
	\put(0,0){\line(0,1){5}}
	\put(0,5){\circle*{2}}
	\put(0,5){\line(0,1){5}}
	\put(0,10){\circle*{2}}
	\put(0,10){\line(0,1){5}}
	\put(0,15){\circle*{2}}
	\put(3,-2){\tiny #1}
	\put(3,3){\tiny #2}
	\put(3,9){\tiny #3}
	\put(3,14){\tiny #4}
	\end{picture}}

	\begin{document}

\title[Cocycle weighted infinitesimal  bialgebras of rooted forests]{Cocycle weighted infinitesimal  bialgebras and pre-Lie algebras on rooted forests}
%
%=========================================================================

\author{Lo\"\i c Foissy}
\address{Univ. Littoral C\^ote d'Opale, UR 2597 LMPA, Laboratoire de Math\'ematiques Pures et Appliqu\'ees Joseph Liouville F-62100 Calais, France}
\email{loic.foissy@univ-littoral.fr}

\author{Yunzhou Xie}
\address{Department of Mathematics, Imperial College London, London SW7 2AZ, UK}
\email{yx3021@ic.ac.uk}

\author{Dawei Zhang}
\address{School of Mathematics and Statistics,
	Nanjing University of Information Science \& Technology, Nanjing, Jiangsu 210044, P.\,R. China}
\email{zhangdw2025@nuist.edu.cn}

\author{Yi Zhang}
\address{School of Mathematics and Statistics,
	Nanjing University of Information Science \& Technology, Nanjing, Jiangsu 210044, P.\,R. China}
\email{zhangy2016@nuist.edu.cn}

%========================================================================
\date{\today}
%========================================================================
\begin{abstract}
	The concept of weighted infinitesimal bialgebras provides an algebraic framework for understanding the non-homogeneous associative Yang-Baxter equation. In this paper, we endow the space of decorated planar rooted forests with a two-parameters family of coproducts, making it into a weighted infinitesimal bialgebra. A combinatorial characterization of the coproducts is given via the notion of forest biideals. Furthermore, by constructing a bilinear symmetric form and introducing a new grafting operation on rooted forests, we describe the associated dual products.
	
	We also introduce the notion of the pair-weight 1-cocycle condition and investigate the universal properties of decorated planar rooted forests satisfying this condition. This leads to the definition of a weighted $\Omega$-cocycle infinitesimal unitary bialgebra. As applications, we identify the initial object in the category of free cocycle infinitesimal unitary bialgebras on undecorated planar rooted forests, corresponding to the well-known noncommutative Connes-Kreimer Hopf algebra. In addition, we establish isomorphisms between different coproduct structures and construct a pre-Lie algebra structure on decorated planar rooted forests.
	
\end{abstract}

\subjclass[2010]{
	16W99, %Rings and algebras with additional structure
	05C05, %trees
	16S10, %Rings determined by universal properties (free algebras, coproducts, adjunction of inverses, etc.)
	%13P10, %Grobner bases; other bases for ideals and modules
	%16S15, %Finite generation, finite presentability, normal forms
	%12H05, %Differential algebra
	%08A70, %application of universal algebra to computer science
	%08B20, %free algebras
	%16R99 %Rings with polynomial identities\none of the above, but in this section
	%06F05,  %Ordered semigroups and monoids
	%20M05 %Free semigroups, generators and relations, word problems
	16T10, %Bialgebras
	%16T05,  %Hopf algebras and their applications
	16T30,  %Connections with combinatorics
	17B60, %Lie (super)algebras associated with other structures (associative,Jordan, etc.)
	%17D25  	%Lie-admissible algebras
	%81T15  %Perturbative methods of renormalization
	%81R10,  %Infinite-dimensional groups and algebras motivated by physics, including Virasoro, Kac-Moody, $W$-algebras and other current algebras and their representations [See also 17B65, 17B67, 22E65, 22E67, 22E70]
	%81R15  %	Operator algebra methods [See also 46Lxx, 81T05]
}

\keywords{Rooted forest; Infinitesimal bialgebra; Cocycle condition; Pre-Lie algebra}

\maketitle

\tableofcontents

\setcounter{section}{0}

\allowdisplaybreaks

%========================================================================
\section{Introduction}

The main aim of this paper is to explore the relationship between weighted infinitesimal bialgebras, operated algebras and pre-Lie algebras. By introducing a pair weight Hochschild 1-cocycle condition, we construct a cocycle weighted infinitesimal  bialgebras of rooted forests, which  generalizes both the infinitesimal  bialgebras introduced  by Gao-Wang~\mcite{GW19} and the infinitesimal algebras proposed by Foissy~\mcite{Foi09, Foi10}.
\subsection{Weighted infinitesimal bialgebras}

A weighted infinitesimal unitary bialgebra is a module $A$ which is simultaneously an algebra (possibly without a unit) and a coalgebra (possibly without a counit) such that the coproduct $\Delta$  satisfy  a weighted derivation property
$$\Delta(ab)=a\cdot\Delta(b)+\Delta(a)\cdot b+\lambda (a\ot b)\,\text{ for } a, b\in A,$$
where $\lambda$ is a fixed constant.

Weighted infinitesimal unitary bialgebras were initially introduced in~\mcite{Fard06}, where they served to provide an algebraic interpretation of the non-homogeneous associative classical Yang-Baxter equations. Subsequent developments can be found in~\mcite{AGR25, ZCGL18, ZZL19}. This structure can be regarded as a unified generalization of two distinct types of infinitesimal bialgebras. The first type, formulated by Joni and Rota~\mcite{JR}, was motivated by the desire to formalize the algebraic underpinning of Newton's divided difference calculus.

Later, Aguiar~\mcite{MA} enriched the theory by introducing an antipode, thus defining what he termed an infinitesimal Hopf algebra. This enhancement preserved many of the essential combinatorial features~\mcite{Agu02} and facilitated applications in several areas, including associative Yang-Baxter equations, Drinfeld doubles, and pre-Lie algebras~\mcite{MA, Agu01, Aguu02}.

Moreover, infinitesimal bialgebras have a profound connection to Drinfeld's notion of Lie bialgebras. This connection is made precise through the framework of balanced infinitesimal bialgebras, as discussed in~\mcite{Agu01, Que23}, which provides a conceptual bridge between these two algebraic structures.

Recent studies on infinitesimal bialgebras have advanced along several key directions. One major line involves Hom-type generalizations. Using twisting maps, Yau~\mcite{Ya} introduced infinitesimal Hom-bialgebras and examined subclasses such as those on quivers, upper-edge structures, and quasitriangular cases. The connections among infinitesimal Hom-bialgebras, the Hom-Yang-Baxter equation, and Hom-Lie bialgebras were further explored in~\mcite{Ya} and extended in~\mcite{LMMP, MM25, MM23}.
Another approach centers on $O$-operators. Bai~\mcite{Bai10} introduced antisymmetric infinitesimal bialgebras and revealed their close relations with Dendriform $D$-bialgebras, Frobenius algebras, $O$-operators, and the generalized associative Yang-Baxter equation. This approach makes it possible to establish  bialgebra theory for different algebraic structures, such as differential algebras~\mcite{LLB23}, Rota-Baxter (Lie) bialgebras~\mcite{BGM24, BGM242}, conformal algebras~\mcite{HB21},  noncommunicative Novikov algebras~\mcite{ HBG24, ZLYG25}, perm algebra~\mcite{BLZ25}, quasi-triangular algebras~\mcite{SW}  and some other Rota-Baxter type algebras~\mcite{HC24}.

In 2006, Loday and Ronco~\mcite{LR06} introduced infinitesimal bialgebras and infinitesimal Hopf algebras from a different perspective, and established a structure theorem for connected infinitesimal bialgebras. Later, Foissy~\mcite{Foi09} applied combinatorial algebra techniques to construct infinitesimal Hopf algebras of the Loday-Ronco on rooted forests, and further explored their connections with the  operads~\mcite{Foi10}. These developments sparked significant interest in the study of infinitesimal bialgebras. However, the coexistence of two distinct approaches to infinitesimal bialgebras posed challenges for researchers. To address this issue, Ebrahimi-Fard~\mcite{Fard06} employed weighted derivations to unify the compatibility conditions of both versions, thereby introducing the notion of weighted infinitesimal bialgebras.

A second motivation for introducing weighted infinitesimal bialgebras arises from the study of the weighted associative Yang-Baxter equation~\mcite{Fard06}, which was referred to by Ogievetsky and Popov~\mcite{OP10} as the non-homogeneous associative classical Yang-Baxter equation. Analogous to the well-known fact that solutions to the classical Yang-Baxter equation give rise to Lie bialgebras, it has been shown that solutions to the weighted associative Yang-Baxter equation naturally induce weighted infinitesimal bialgebras~\mcite{Fard06}. As such, weighted infinitesimal bialgebras can be regarded as an algebraic abstraction of the weighted associative Yang-Baxter equation~\mcite{OP10}.

\subsection{Cocycle operated algebras and rooted forests}

Motivated by Higgins'~\mcite{Hig56} foundational work on multi-operator groups, Kurosh~\mcite{Kur60} was among the first to explore algebras endowed with linear operators. Despite its conceptual novelty, this line of research remained relatively dormant for decades until it was revitalized through the systematic study by Guo~\mcite{Gub, Guo09}; see also~\mcite{BCQ10}. In his work, Guo~\mcite{Guo09} constructed free algebras with linear operators by leveraging various combinatorial structures, such as Motzkin paths, rooted forests, and bracketed words. These algebras, defined over a set $\Omega$ indexing the operators, are now commonly referred to as $\Omega$-operated algebras, or more broadly, multi-operated algebras.

The rooted forest is a significant object studied in combinatorics and algebra.
In 2016, Zhang, Gao, and Guo~\mcite{ZGG16} introduced the notion of operated Hopf algebras by combining the theories of operated algebras and Hopf algebras on decorated rooted forests.
Their study provided a systematic construction of free objects in this new category. This framework was subsequently expanded in~\mcite{ZGG22} using the Gr\"obner-Shirshov basis method, leading to the development of a general theory of $\Omega$-operated cocycle Hopf algebras.

A particularly noteworthy example within this framework is the Connes-Kreimer Hopf algebra on rooted forests. When equipped with the grafting operation $B^{+}$ which satisfies the  1-cocycle condition
\begin{equation}
	\Delta B^{+}:= B^{+} \otimes \etree+ (\id\otimes B^{+})\Delta,
\end{equation}
it exemplifies a specific case of an cocycle operated algebra. We would like to point out that Bruned, Hairer and Zambotti~\mcite{BHZ} used in  typed decorated rooted trees to give a description of a renormalisation procedure of stochastic PDEs and a generalize Connes-Kreimer Hopf algebra on typed decorated tooted trees was studied by Foissy~\mcite{Foi18}.

The Loday-Ronco Hopf algebra~\mcite{LR98} can also be studied within the unified framework of operated algebras. In ~\mcite{ZG20}, Zhang and Gao introduced the
concepts of $\vee_\Omega$-Hopf algebra and proved that the Loday-Ronco Hopf algebra can be characterized as the free multiple 1-cocycle $\vee_\Omega$-Hopf algebra generated by the empty set. Later, Marcolli, Berwick, and Chomsky~\mcite{MBC23} have identified applications of this algebra in the Minimalist program within generative linguistics. In~\cite[Page 72]{MBC25}, it was observed that the external merge in Stabler's Minimalism can be interpreted as a cocycle $\vee_\Omega$-algebraic structure on the Loday-Ronco Hopf algebra. Building on this perspective, Marcolli-Berwick-Chomsky formalized syntactic merge in the Minimalist Program as algebraic operations within cocycle Hopf algebras and further investigated its computational properties~\mcite{MBC23, MBC25}.

Hence, the construction of bialgebra structures on  rooted forests is of particular interest, not only due to their rich underlying algebraic properties but also because of their wide ranging applications in mathematical physics and logical linguistics.

In this paper we introduce the notion of a generalized Hochschild 1-cocycle condition of pair weight $(\lambda, \mu)$:
\begin{align*}
	\col B_{\omega}^{+}=-\lambda B_{\omega}^{+} \otimes \etree+\mu  \id \otimes \etree + (\id\otimes B_{\omega}^{+})\col, \forall \lambda, \mu \in \bfk,
\end{align*}
where $(B_{\omega}^{+})_{\omega\in \Omega}$ is a family of grafting operators.
When $(\lambda, \mu) = (0, 1)$, the condition reduces to the infinitesimal 1-cocycle condition, which was employed by Gao and Wang~\mcite{GW19} in the construction of infinitesimal unitary Hopf algebras on rooted forests.

When $(\lambda, \mu) = (-1, 0)$, it recovers the classical Hochschild 1-cocycle condition, which was utilized by Foissy and Holtkamp to construct a Hopf algebra~\mcite{Foi02, Hol03} as well as an infinitesimal bialgebra structure~\mcite{Foi09} on rooted forests.
In the case $(\lambda, \mu) = (\lambda, 0)$, it gives rise to the Hochschild 1-cocycle condition of weight $\lambda$, as introduced in~\mcite{ZCGL18}, which serves as the foundation for constructing weighted infinitesimal unitary bialgebras on rooted forests.
For $(\lambda, \mu) = (-1, \lambda)$, the condition was adopted by Zheng and Liu~\mcite{ZL25} in the construction of cocycle Hopf algebra structures on free modified Rota-Baxter algebras.
Finally, the case $(\lambda, \mu) = (0, 0)$ corresponds to the setting used in~\mcite{ZhG20} for the construction of left counital Hopf algebra structures on free commutative Nijenhuis algebras.

Having this generalized 1-cocycle condition in hand, we combine the $\Omega$-operated algebra $(\mathcal{H_{\mathrm{RT}}}(X, \Omega), \{B^+_\omega\mid \omega\in \Omega\})$
with weighted  unitary bialgebras and introduce the concept of $\Omega$-cocycle weighted bialgebras. We prove that the decorated planar rooted forests $\mathcal{H_{\mathrm{RT}}}(X, \Omega)$ is the free objects in these categories provided suitable operations are equipped. What's more, we give a combinatorial description of the coproduct of this cocycle weighted bialgebras by using Foissy' s rooted forests biideals.

\subsection{Pre-Lie algebras}

Our third source of inspiration stems from pre-Lie algebras, which exhibit profound connections across various domains in mathematics and mathematical physics. These include Lie groups and Lie algebras, the classical and quantum Yang-Baxter equations, vertex algebras, quantum field theory, and the theory of operads (see~\mcite{Bai} and the references therein). Also known as Vinberg algebras, pre-Lie algebras were originally introduced in Vinberg's study of convex homogeneous cones~\mcite{Vin63}, and emerged independently in the context of associative algebra deformations and cohomology~\mcite{Ger63}. A prominent example of their appearance is in perturbative quantum field theory~\mcite{Kre98}. In a notable contribution, Chapoton and Livernet~\mcite{CL01} described the operad associated with pre-Lie algebras in terms of rooted trees equipped with a grafting operation.

Our exploration of pre-Lie algebra structures on decorated planar rooted forests is driven not merely by an interest in infinitesimal bialgebras, but also by deeper combinatorial and algebraic motivations. Within the framework developed by Aguiar~\mcite{Aguu02} for infinitesimal bialgebras, one can construct a natural pre-Lie algebra structure from any given infinitesimal bialgebra. Building upon this approach, a pre-Lie algebra arising from a general infinitesimal unitary bialgebra of weight~$\lambda$ was formulated in~\mcite{CLPZ}. As an application, Theorem~\mref{thm:rt2} establishes a new pre-Lie structure defined on decorated planar rooted forests. Furthermore, leveraging the notion of forest biideals, a combinatorial interpretation of the binary bilinear operation defining this pre-Lie algebra is provided. These results emphasize the rich combinatorial underpinnings of pre-Lie algebras and open new avenues for their systematic study.

{\bf Structure of the Paper.}
In Section~\mref{sec:ibw}, we recall the concept of a weighted infinitesimal (unitary) bialgebra and show that some well-known algebras possess a weighted infinitesimal (unitary) bialgebra.

In Section~\mref{sec:infbi}, after summarizing concepts and basic facts on rooted forests, we give a new way to decorate planar rooted forests, which makes it possible to construct more general free objects.  By posing a pair weighted version of a Hochschild 1-cocycle condition (Eq.~(\mref{eq:cdbp})), we construct a new coproduct on decorated planar rooted forests $H_{\mathrm{RT}}(X,\Omega)$ to equip it with a new coalgebra structure (Theorem~\mref{thm:rt1}). Further $H_{\mathrm{RT}}(X,\Omega)$ can be turned into an infinitesimal unitary bialgebra of weight $\lambda$ with respect to the concatenation product and the empty tree as its unit (Theorem~\mref{thm:rt2}). By the forests biideals, we  give a combinatorial description of this new coproduct (Theorem~\mref{thm:comb}). We end this section by giving a description for dual products by construction of a bilinear symmetric form and a new grafting on rooted forests (Theorem~\mref{theoproduitdual}).

Combining weighted infinitesimal bialgebras with operated algebras, Section~\mref{uni} propose the concept of weighted $\Omega$-operated infinitesimal bialgebras (Definition~\mref{defn:xcobi}~(\mref{it:def1})).
When a pair weighted 1-cocycle condition is involved,  the concept of weighted $\Omega$-cocycle infinitesimal unitary bialgebras is also introduced~(Definition~\mref{defn:xcobi}~(\mref{it:def3})).
Thanks to these concepts, we show that $H_{\RT}(X,\Omega)$ is the free $\Omega$-cocycle infinitesimal unitary bialgebra of weight $\lambda$ on a set $X$ (Theorem~\mref{thm:propm}). As an application, we obtain that the undecorated planar rooted forests is the free cocycle infinitesimal unitary bialgebra of weight $\lambda$ on the empty set (Corollary~\mref{coro:rt16}).

In Section~\mref{sec:preLie}, by investigating the relationship between weighted infinitesimal unitary bialgebras and pre-Lie algebras (Theorem~\mref{thm:preL}), we equip $\hrts$ with a pre-Lie algebraic structure $(\hrts, \rhd_{\RT})$ and a Lie algebraic structure $(\hrts, [_{-}, _{-}]_{\RT})$ (Theorem~\mref{thm:preope}). The combinatorial descriptions of $\rhd_{\RT}$ and $[_{-}, _{-}]_{\RT}$ are also given (Corollary~\mref{coro:preLcomb}).

{\bf Notation.}
Throughout this paper, let $\bfk$ be a unitary commutative ring unless the contrary is specified,
which will be the base ring of all modules, algebras, coalgebras, bialgebras, tensor products, as well as linear maps.
By an algebra we mean an associative \bfk-algebra (possibly without unit)
and by a coalgebra we mean a coassociative \bfk-coalgebra (possibly without counit).
We use Sweedler notation:$$\Delta(a) = \sum_{(a)} a^{(1)} \ot a^{(2)}.$$
For an algebra $A$, $A\ot A$ is viewed as an $(A,A)$-bimodule in the standard way
\begin{equation}
	a\cdot(b\otimes c):=ab\otimes c\,\text{ and }\, (b\otimes c)\cdot a:= b\otimes ca,
	\mlabel{eq:dota}
\end{equation}
where $a,b,c\in A$.

\section{Weighted infinitesimal unitary bialgebras and some examples}\label{sec:ibw}

In this section, we begin by recalling the notion of weighted infinitesimal (unitary) bialgebras, as introduced in~\mcite{Fard06}. This framework serves as a unifying generalization of the constructions originally proposed by Joni and Rota~\mcite{JR}, as well as by Loday and Ronco~\mcite{LR06}.

\begin{defn}\mcite{ZCGL18}
	Let $\lambda$ be a given element of $\bfk$.
	\begin{enumerate}
		\item An {\bf infinitesimal bialgebra} (abbreviated {\bf $\epsilon$-bialgebra}) {\bf of weight $\lambda$} is a triple $(A,m,\Delta)$, where
		\begin{enumerate}
			\item $(A,m)$ is an algebra (possibly without unit),
			\item $(A,\Delta)$ is a coalgebra (possibly without counit),
		\end{enumerate}
		and the coproduct $\Delta$ satisfies the  weighted derivation rule on $A$ in the sense that
		\begin{equation}
			\Delta (ab)=a\cdot \Delta(b)+\Delta(a) \cdot b+\lambda (a\ot b),\quad \forall a, b\in A.
			\mlabel{eq:cocycle}
		\end{equation}
		
		\item If further $(A,m,1)$ is a unitary algebra and $(A,\Delta ,\varepsilon)$ is a counitary coalgebra , then the quintuple $(A, m, 1,\Delta, \varepsilon)$ is called an {\bf infinitesimal unitary counitary bialgebra} (abbreviated {\bf $\epsilon$-unitary counitary bialgebra}) {\bf of weight $\lambda$}.
		\mlabel{def:iub}
		\item Let $A$ and  $B$ be two $\epsilon$-bialgebras of weight $\lambda$.
		A map $\phi : A\rightarrow B$ is called an {\bf infinitesimal bialgebra morphism} (abbreviated $\epsilon$-bialgebra morphism) if $\phi$ is an algebra morphism and a coalgebra morphism. The concept of {\bf infinitesimal unitary bialgebra morphism} can be defined in the same way.
	\end{enumerate}
\end{defn}
We shall use the infix notation $\epsilon$- interchangeably with the adjective ``infinitesimal" throughout the rest of this paper.
\begin{remark}\label{remk:4rem}
	\begin{enumerate}
		\item \label{remk:units}Let $(A,m,1, \Delta)$ be an $\epsilon$-unitary bialgebra of weight $\lambda$. Then $\Delta(1)=-\lambda(1\ot1)$ by taking $a=b=1$ in Eq.~(\mref{eq:cocycle}).
		\item  $\epsilon$-bialgebras introduced by Joni and Rota~\mcite{JR} are  $\epsilon$-bialgebra of weight 0,
		and $\epsilon$-bialgebras originated from Loday and Ronco~\mcite{LR06} are $\epsilon$-bialgebra of weight -1.
		
		%\item Note that our base ring $\bfk$ is a unitary commutative ring, if the base ring $\bfk$ is further a field, then the $\epsilon$-bialgebra of weight $\lambda$ is equivalent to the one introduced by Loday and Ronco. Indeed, if the coproduct $\Delta$ satisfies Eq.~(\mref{eq:cocycle}) for $\lambda \neq 0$, then the linear map $\Delta'=-\Delta/ \lambda$ satisfies the relation
		%    \begin{align*}
			%    \Delta'(ab)=a \cdot \Delta'(b)+\Delta'(a)\cdot b -a\ot b.
			%    \end{align*}
		
	\end{enumerate}
\end{remark}

%\begin{defn}\mcite{GZ}
%Let $A$ and  $B$ be two $\epsilon$-bialgebras of weight $\lambda$.
%A map $\phi : A\rightarrow B$ is called an {\bf infinitesimal bialgebra morphism} (abbreviated $\epsilon$-bialgebra morphism) if $\phi$ is an algebra morphism and a coalgebra morphism. The concept of {\bf infinitesimal unitary bialgebra morphism} can be defined in the same way.
%\end{defn}

\begin{exam}\label{exam:bialgebras}
	Here are some examples of weighted $\epsilon$-unitary bialgebras.
	\begin{enumerate}
		\item Any unitary algebra $(A, \mu,1)$ is an $\epsilon$-unitary bialgebra of weight $\lambda$ by taking $$\Delta(a)=-\lambda(a\ot 1) \, \text{ for }\,  a\in A.$$
		
		\item \cite[Example~2.4]{CLPZ} The  polynomial algebra $\bfk [x]$ is an $\epsilon$-unitary bialgebra of weight $\lambda$ with the coproduct defined by
		\begin{align*}
			\Delta(1)&=-\lambda (1\ot 1) &&\text{ and }&
			\Delta(x^n)&=\sum_{i=0}^{n-1}x^{i}\ot x^{n-1-i}+\lambda \sum_{i=1}^{n-1}x^{i}\ot x^{n-i} \text { for } n\geq 1.
		\end{align*}
		
		%\item \cite[Example~2.3.5]{MA} The polynomial algebra $\bfk \langle x_1, x_2, x_3,\ldots \rangle$ is an $\epsilon$-unitary bialgebra of weight zero with the coproduct $\Delta$
		%given by Eq.~(\mref{eq:cocycle}) and
		% \begin{align*}
			%    \Delta (x_n)=\sum_{i=0}^{n-1}x_{i}\ot x_{n-1-i}=1\ot x_{n-1}+x_1\ot x_{n-2}+ \cdots +x_{n-1}\ot 1,
			%    \end{align*}
		%where we set $x_0=1$.
		\item \cite{ZZL19} The matrix algebra $M_n(\bfk)$ is an $\epsilon$-unitary bialgebra of weight zero with the coproduct defined by
		\begin{align*}
			\col (E_{ij}):=\begin{cases}\displaystyle\sum_{s=i}^{j-1}E_{is}\otimes E_{(s+1)j} &\text{ if } i< j,\\
				0 &\text{ if }i= j,\\
				\displaystyle-\sum_{s=j}^{i-1}E_{is}\otimes E_{(s+1)j} &\text{ if } i> j.
			\end{cases}
		\end{align*}
		
		\item \cite[Section~2.3]{LR06}\label{exam:tensor}
		Let $V$ denote a vector space. Recall that the tensor algebra $T(V)$ over $V$ is the tensor module,
		\begin{align*}
			T(V)=\bfk \oplus V\oplus V^{\ot 2}\oplus \cdots \oplus V^{\ot n}\oplus \cdots,
		\end{align*}
		equipped with the associative multiplication called concatenation defined by
		\begin{align*}
			v_1\cdots v_i\ot v_{i+1}\cdots v_n \mapsto v_1\cdots v_i v_{i+1}\cdots v_n \quad \text{ for } 0 \leq i\leq n,
		\end{align*}
		and with the convention that $v_1v_0=1$ and $v_{n+1}v_{n}=1$. It is a well-known free associative algebra. The
		tensor algebra $T(V)$ is an $\epsilon$-unitary bialgebra of weight $-1$ with the coassociative coproduct defined by
		\begin{align*}
			\Delta(v_1\cdots v_n):=\sum_{i=0}^{n}v_1\cdots v_i\ot v_{i+1}\cdots v_n.
		\end{align*}
		
		\item \cite[Example~2.4]{CLPZ}
		The free algebra $\bfk \langle X\rangle$ generated by a set $X$ (this algebra is isomorphic to tensor algebra $T(\bfk X)$) can be turned into an $\epsilon$-unitary bialgebra of weight $\lambda$ with the  coproduct defined by
		\begin{align*}
			\Delta(x_{1}x_{2}\cdots {x_{n}}):&= \sum_{i=1}^n{x_{1}}\cdots{x_{i-1}}\otimes{x_{i+1}}\cdots{x_{n}}+
			\lambda\sum_{i=1}^{n-1}{x_{1}}\cdots{x_{i}}\otimes{x_{i+1}}\cdots{x_{n}}.
		\end{align*}
	\end{enumerate}
\end{exam}

\section{Weighted infinitesimal unitary bialgebras of  rooted forests}
\label{sec:infbi}
In this section, we first recall the concepts of planar rooted forests~\mcite{Sta97} and decorated planar rooted forests~\mcite{Foi02,ZCGL18, Guo09}. We then equip the space of decorated planar rooted forests with a coproduct which makes it a weighted infinitesimal bialgebra. We give a combinatorial description of the coproduct by the concept of forests biideals. By construction of a bilinear symmetric form and a new grafting on rooted forests, we give a description for dual products.

\subsection{Decorated planar rooted forests}\mlabel{sucsec:deco}

A $\mathbf{rooted\ tree}$ is a finite, connected, and acyclic graph that contains a distinguished vertex referred to as the $\mathbf{root}$. A $\mathbf{planar\ rooted\ tree}$ is a rooted tree equipped with a fixed planar embedding, meaning its structure is preserved under a specific drawing in the plane.
The initial collection of planar rooted trees includes the following examples:
$$\tun,\ \tdeux,\ \ttroisun,\ \ttroisdeux,\ \tquatreun,\  \tquatretrois,\ \tquatredeux,\ \tquatrequatre,\ \tquatrecinq, \, \tcinqdeux,\tcinqtrois,\tcinqquatre$$
In each case, the root is positioned at the bottom of the tree.

We now recall several foundational definitions and properties of decorated planar rooted trees and forests that will be employed throughout this paper. For further details, see~\mcite{Foi02, ZCGL18, Guo09, PZGL}.

\begin{enumerate}
	\item Let $\calt$ denote the collection of planar rooted trees, and let $M(\calt)$ be the free monoid generated by $\calt$ under concatenation, denoted by $m_{\mathrm{RT}}$, which is usually omitted for simplicity. The identity element (also called the {\bf empty tree}) in $M(\calt)$ is denoted by $1$.
	
	\item Any element in $M(\calt)$, referred to as a {\bf planar rooted forest}, is a noncommutative concatenation of planar rooted trees. It is typically written in the form $F = T_1 \cdots T_n$ with $T_1, \ldots, T_n \in \calt$. We adopt the convention that $F = 1$ when $n = 0$.
	
	\item Let $\Omega$ be a nonempty set, and let $X$ be another set disjoint from $\Omega$. Define $\rts$ (resp.~$\rfs$) to be the set of planar rooted trees (resp.~forests) in which {\bf internal vertices} (i.e., non-leaf vertices) are decorated exclusively by elements of $\Omega$, while {\bf leaf vertices} may be decorated by elements from $X \sqcup \Omega$. The trivial tree $\bullet$, which contains only a single vertex, is regarded as having a leaf vertex. Elements in $\rfs$ are called {\bf decorated planar rooted forests}.
	
	\item Define
	\begin{align*}
		\hrts:= \bfk \rfs=\bfk M(\rts)
	\end{align*}
	as the {\bf free $\bfk$-module} with basis given by $\rfs$.
	
	\item For each $\omega \in \Omega$, define the linear operator
	$$B^+_\omega:\hrts\to \hrts$$
	to be the {\bf grafting operation}, which sends the empty forest $1$ to the tree $\bullet_\omega$, and for a general forest, returns the tree formed by attaching all roots to a new root decorated by $\omega$.
	
	\item Given $F = T_1 \cdots T_n \in \rfs$ with $n \geq 0$ and $T_1, \ldots, T_n \in \rts$, define $\bre(F) := n$ to be the {\bf breadth} of $F$. In particular, we set $\bre(\etree) = 0$ when $F$ is the empty forest.
	
	\item Let $\bullet_X := \{ \bullet_x \mid x \in X \}$ and define
	\begin{align*}
		\calf_0 := M(\bullet_X) = S(\bullet_X) \sqcup \{\etree\},
	\end{align*}
	where $M(\bullet_X)$ (resp.~$S(\bullet_X)$) denotes the submonoid (resp.~subsemigroup) of $\rfs$ generated by $\bullet_X$.
	Suppose that $\calf_n$ has been constructed for some $n \geq 0$. Then define
	\begin{align*}
		\calf_{n+1} := M\left(\bullet_X \sqcup \left( \bigsqcup_{\omega \in \Omega} B^+\omega(\calf_n) \right) \right).
	\end{align*}
	This gives rise to an ascending chain $\calf_n \subseteq \calf{n+1}$, and we obtain
	\begin{align*}
		\rfs = \lim_{\longrightarrow} \calf_n = \bigcup_{n=0}^{\infty} \calf_n.
		%\mlabel{eq:rdeff}
	\end{align*}
	An element $F \in \calf_n \setminus \calf_{n-1}$ is said to have {\bf depth} $n$, which we denote by $\dep(F) = n$.
\end{enumerate}

\begin{exam}
	The following are some examples in $\rts$:
	$$\tdun{$\alpha$},\ \, \tdun{$x$},\ \, \tddeux{$\alpha$}{$\beta$},\ \,  \tddeux{$\alpha$}{$x$}, \ \, \tdtroisun{$\alpha$}{$\beta$}{$\gamma$},\ \,\tdtroisun{$\alpha$}{$x$}{$\gamma$}, \ \,\tdtroisun{$\alpha$}{$x$}{$y$}, \ \, \tdquatretrois{$\alpha$}{$\beta$}{$\gamma$}{$\beta$},\ \, \tdquatretrois{$\alpha$}{$\beta$}{$\gamma$}{$x$}, \ \, \tdquatretrois{$\alpha$}{$\beta$}{$x$}{$y$},$$
	with $\alpha,\beta,\gamma\in \Omega$ and $x, y \in X$.
\end{exam}

\begin{exam}
	The following are some grafting operations:
	\begin{align*}
		B_{\omega}^{+}(\etree)&=\tdun{$\omega$}\:,& B_{\omega}^{+}(\tdun{$x$}\tddeux{$\alpha$}{$y$})&=\tdquatretrois{$\omega$}{$\alpha$}{$y$}{$x$},&  B_{\omega}^{+}(\tddeux{$\beta$}{$\alpha$}\tdun{$x$})&=
		\tdquatredeux{$\omega$}{$x$}{$\beta$}{$\alpha$},
	\end{align*}
	where $\alpha, \beta, \omega\in \Omega$ and $x, y \in X$.
\end{exam}

\begin{exam}
	Here are some examples about the depths of some decorated planar rooted forests.
	\begin{align*}
		\dep(\etree) =\ \dep(\bullet_x) &=0,& \dep(\bullet_\omega)=\dep(B^+_{\omega}(\etree)) &= 1,\\
		\dep(\tddeux{$\omega$}{$\alpha$})= \dep(B^+_{\omega}(B^+_{\alpha}(\etree))) &=2, &
		\dep(\tdun{$x$}\tddeux{$\omega$}{$y$}\tdun{$y$}) =\ \dep(\tddeux{$\omega$}{$y$})=
		\dep(B^+_{\omega}(\bullet_y)) &=1,\\ \dep(\tdtroisun{$\omega$}{$x$}{$\alpha$}) = \dep(B^+_{\omega}(B^+_{\alpha}(\etree) \bullet_x)) &= 2,
	\end{align*}
	where $\alpha,\omega\in \Omega$ and $x, y \in X$.
\end{exam}

\begin{remark}\mcite{ZCGL18}\label{re:3ex}
	We now present several special cases of the decorated planar rooted forests in $\mathcal{F}(X, \Omega)$:
	
	\begin{enumerate}
		\item If $X = \emptyset$ and $\Omega$ is a singleton set, then all decorated planar rooted forests in $\mathcal{F}(X, \Omega)$ share the same decoration. Hence, the decorations can be omitted, and the forests can be identified with planar rooted forests without decorations. This case corresponds to the well-known Foissy-Holtkamp Hopf algebra, which is a noncommutative analogue of the Connes-Kreimer Hopf algebra~\mcite{Foi02, Hol03}.
		
		\item If $X = \emptyset$, then $\mathcal{F}(X, \Omega)$ reduces to $\Omega$-decorated planar rooted forests with no leaf decorations. This setting was studied by the first author~\mcite{Foi02}, where a decorated noncommutative version of the Connes-Kreimer Hopf algebra was constructed. \mlabel{it:2ex}
		
		\item If $\Omega$ is a singleton set, then $\rfs$ consists of planar rooted forests decorated only at the leaves. This case was introduced and studied in~\mcite{ZGG16} to construct a cocycle Hopf algebra structure on decorated planar rooted forests.
	\end{enumerate}
\end{remark}

\subsection{Weighted infinitesimal unitary bialgebras on decorated planar rooted forests}
In this subsection, we shall define a weighted infinitesimal unitary bialgebraic structure on decorated planar rooted forests.

Let $\lambda, \mu $ be  given elements of $\bfk$. We now define a new coproduct $\col$ on $\hrts$ by induction on depth.
By linearity, we only need to define $\col(F)$ for basis elements $F\in \rfs$.
For the initial step of $\dep(F)=0$, we  define
%\begin{equation}
%\col(F) :=
%\left\{
%\begin{array}{ll}
%-\lambda (\etree \ot \etree) & \text{ if } F = \etree, \\
%\mu (\bullet_{x}\ot \bullet_{x})-\lambda(\bullet_{x}\ot \etree+\etree \ot \bullet_{x}) & \text{ if } F = \bullet_x \text{ for some } x \in X,\\
%\bullet_{x_{1}}\cdot \col(\bullet_{x_{2}}\cdots\bullet_{x_{m}})+ \col(\bullet_{x_{1}}) \cdot (\bullet_{x_{2}}\cdots\bullet_{x_{m}})\\
%+\lambda\bullet_{x_{1}}\ot \bullet_{x_{2}}\cdots\bullet_{x_{m}}
%& \text{ if }  F=\bullet_{x_{1}}\cdots \bullet_{x_{m}} \text{ with } m\geq 2.
%\end{array}
%\right.
%
% \mlabel{eq:dele}
%\end{equation}
\begin{equation}
	\col(F) :=
	\left\{
	\begin{array}{ll}
		-\lambda (\etree \ot \etree) & \text{ if } F = \etree, \\
		\mu (\etree \ot \etree)-\lambda(\bullet_{x}\ot \etree+\etree \ot \bullet_{x}) & \text{ if } F = \bullet_x \text{ for some } x \in X,\\
		\bullet_{x_{1}}\cdot \col(\bullet_{x_{2}}\cdots\bullet_{x_{m}})+ \col(\bullet_{x_{1}}) \cdot (\bullet_{x_{2}}\cdots\bullet_{x_{m}})\\
		+\lambda\bullet_{x_{1}}\ot \bullet_{x_{2}}\cdots\bullet_{x_{m}}
		& \text{ if }  F=\bullet_{x_{1}}\cdots \bullet_{x_{m}} \text{ with } m\geq 2.
	\end{array}
	\right.
	\mlabel{eq:dele}
\end{equation}

For the induction step of $\dep(F)\geq 1$, we reduce the definition to induction on breadth.
If $\bre(F) = 1$, we
write $F=B_{\omega}^{+}(\lbar{F})$ for some $\omega\in \Omega$ and $\lbar{F}\in \rfs$, and define
\begin{equation}
	\col(F)=\col B_{\omega}^{+}(\lbar{F}) :=-\lambda B_{\omega}^{+}(\lbar{F}) \otimes \etree+ \mu\lbar{F} \otimes \etree + (\id\otimes B_{\omega}^{+})\col(\lbar{F}).
	\mlabel{eq:dbp}
\end{equation}
In other words
\begin{align}
	\col B_{\omega}^{+}=-\lambda B_{\omega}^{+} \otimes \etree+\mu  \id \otimes \etree + (\id\otimes B_{\omega}^{+})\col.
	\mlabel{eq:cdbp}
\end{align}
we call Eq.~(\mref{eq:cdbp}) the {\bf pair weight $(\lambda, \mu)$ of 1-cocycle condition}.
If $\bre(F) \geq 2$, we write $F=T_{1}T_{2}\cdots T_{m}$ with $m\geq 2$ and $T_1, \ldots, T_m \in \rts$, and define
\begin{equation}
	\col(F)=T_{1}\cdot \col(T_{2}\cdots T_{m})+\col(T_{1})\cdot (T_{2}\cdots T_{m})+\lambda T_{1} \ot T_{2}\cdots T_{m},
	\mlabel{eq:delee1}
\end{equation}
where the left and right actions are given in Eq.~(\mref{eq:dota}).

\begin{exam}\mlabel{exam:cop}
	Let $x,y \in X$ and $\alpha, \beta \in \Omega$. Then
	\begin{align*}
		\col(\tdun{$x$})=&\ \mu(\etree \ot \etree)-\lambda(\tdun{$x$}\ot \etree+\etree \ot \tdun{$x$}),\\
		\col(\tdun{$\alpha$})=&\ \mu(\etree \ot \etree)-\lambda(\tdun{$\alpha$}\ot \etree+\etree \ot \tdun{$\alpha$}),\\
		\col(\tddeux{$\alpha$}{$x$})=&\ \mu (\tdun{$x$} \ot \etree + \etree \ot \tdun{$\alpha$})
		-\lambda(\tddeux{$\alpha$}{$x$}\ot \etree +\tdun{$x$}\ot \tdun{$\alpha$}+ \etree \ot \tddeux{$\alpha$}{$x$}),\\
		\col(\tdun{$y$}\tddeux{$\alpha$}{$x$})=&\ \mu (\tdun{$y$}\tdun{$x$} \ot \etree + \tdun{$y$}\ot\tdun{$\alpha$}+\etree \ot \tddeux{$\alpha$}{$x$})\\
		&-\lambda (\tdun{$y$}\tddeux{$\alpha$}{$x$}\ot \etree + \tdun{$y$}\tdun{$x$}\ot \tdun{$\alpha$}
		+\tdun{$y$}\ot \tddeux{$\alpha$}{$x$}+1\ot \tdun{$y$}\tddeux{$\alpha$}{$x$}),\\
		\col(\tdquatretrois{$\alpha$}{$\beta$}{$x$}{$y$})=&\mu(\etree\ot \tdtroisdeux{$\alpha$}{$\beta$}{$x$}+\tdun{$y$}\ot \tddeux{$\alpha$}{$\beta$}+\tdun{$y$}\tdun{$x$}\ot \tdun{$\alpha$}+ \tdun{$y$}\tddeux{$\beta$}{$x$}\ot \etree)\\
		&-\lambda(\tdquatretrois{$\alpha$}{$\beta$}{$x$}{$y$}\ot \etree +\tdun{$y$}\tddeux{$\beta$}{$x$}\ot \tdun{$\alpha$}+\tdun{$y$}\tdun{$x$}\ot\tddeux{$\alpha$}{$\beta$}+\tdun{$y$}\ot\tdtroisdeux{$\alpha$}{$\beta$}{$x$}+\etree \otimes \tdquatretrois{$\alpha$}{$\beta$}{$x$}{$y$} ).
		%
		%\col(~ \tdtroisun{$\alpha$}{$y$}{$x$})&=\mu(\tdun{$x$}\tdun{$x$} \ot \tddeux{$\alpha$}{$y$}+\tdun{$x$}\ot \tdtroisun{$\alpha$}{$y$}{$x$})-\lambda (~\tdtroisun{$\alpha$}{$y$}{$x$}\ot 1+\tdun{$x$}\ot \tddeux{$\alpha$}{$y$}+\tdun{$x$}\tdun{$y$}\ot \tdun{$\alpha$}+1\ot \tdtroisun{$\alpha$}{$y$}{$x$}).
	\end{align*}
\end{exam}

To show $(\hrts, \col)$ is a coalgebra, we record the following two lemmas as a preparation.

\begin{lemma}\label{lem:rt11}
	Let $\bullet_{x_{1}}\cdots \bullet_{x_{m}}\in \hrts$ with $m\geq 1$ and $x_1, \ldots, x_m\in X$. Then
	$$\col(\bullet_{x_{1}}\cdots \bullet_{x_{m}})=\mu \sum_{i=1}^{m}\bullet_{x_{1}}\cdots\bullet_{x_{i-1}}\otimes
	\bullet_{x_{i+1}}\cdots\bullet_{x_{m}}
	-\lambda \sum_{i=0}^{m}\bullet_{x_{1}}\cdots\bullet_{x_{i}}\otimes
	\bullet_{x_{i+1}}\cdots\bullet_{x_{m}},$$
	with the convention that $\bullet_{x_{1}}\bullet_{x_{0}}=1$ and $\bullet_{x_{m+1}}\bullet_{x_{m}}=1$.
\end{lemma}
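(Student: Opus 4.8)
The plan is to prove the closed formula for $\col(\bullet_{x_{1}}\cdots \bullet_{x_{m}})$ by induction on $m$, using the recursive definition in the third case of Eq.~(\mref{eq:dele}) as the induction step. The base cases $m=1$ and $m=2$ are established by directly reading off the definitions: for $m=1$ the formula must reduce to $\mu(\etree\ot\etree)-\lambda(\bullet_{x_{1}}\ot\etree+\etree\ot\bullet_{x_{1}})$, which is exactly the claimed sums with the stated conventions $\bullet_{x_{1}}\bullet_{x_{0}}=1$ and $\bullet_{x_{m+1}}\bullet_{x_{m}}=1$ accounting for the endpoint terms. I would verify that these conventions make the $i=0$ and $i=m$ terms of the $\lambda$-sum equal $\etree\ot\bullet_{x_{1}}\cdots\bullet_{x_{m}}$ and $\bullet_{x_{1}}\cdots\bullet_{x_{m}}\ot\etree$ respectively, and that the $\mu$-sum has no such boundary contributions of the wrong shape.

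For the induction step, assume the formula holds for $\bullet_{x_{2}}\cdots\bullet_{x_{m}}$ (a forest of breadth $m-1$) and for the single vertex $\bullet_{x_{1}}$. I would substitute these into
\[
\col(\bullet_{x_{1}}\cdots\bullet_{x_{m}})
=\bullet_{x_{1}}\cdot\col(\bullet_{x_{2}}\cdots\bullet_{x_{m}})
+\col(\bullet_{x_{1}})\cdot(\bullet_{x_{2}}\cdots\bullet_{x_{m}})
+\lambda\,\bullet_{x_{1}}\ot\bullet_{x_{2}}\cdots\bullet_{x_{m}},
\]
and then compute each piece using the bimodule actions of Eq.~(\mref{eq:dota}), namely $\bullet_{x_{1}}\cdot(b\ot c)=\bullet_{x_{1}}b\ot c$ and $(b\ot c)\cdot(\bullet_{x_{2}}\cdots\bullet_{x_{m}})=b\ot c\,\bullet_{x_{2}}\cdots\bullet_{x_{m}}$. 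The first term contributes the portion of each sum in which the left tensor factor begins with $\bullet_{x_{1}}$ (i.e.\ the cut point lies strictly after the first vertex), while the term $\col(\bullet_{x_{1}})\cdot(\cdots)$ supplies the terms where the cut lies at position $0$ or $1$. The key bookkeeping is to check that the explicit $\lambda\,\bullet_{x_{1}}\ot\bullet_{x_{2}}\cdots\bullet_{x_{m}}$ correction precisely converts the shifted $\lambda$-index range coming from $\bullet_{x_{1}}\cdot\col(\cdots)$ into the full range $i=0,\dots,m$, and that the $\mu$-terms from the two actions reassemble into the single sum $\mu\sum_{i=1}^{m}$ with no double-counting or omission at $i=1$.

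The main obstacle will be the careful index reconciliation at the boundary: one must track how the $i=0$ and $i=m$ endpoint terms (governed by the conventions on empty products) are produced, and confirm that the coefficient $\mu$ attached to $\col(\bullet_{x_{1}})=\mu(\etree\ot\etree)-\lambda(\cdots)$ generates exactly the $i=1$ term of the $\mu$-sum while its $\lambda$-part merges cleanly with the shifted $\lambda$-sum. I expect the computation to be routine once the index shifts are organized, but it is precisely this reindexing—aligning the inductively given ranges against the target ranges—that requires the most care, so I would lay out the two sums side by side and match terms one cut-position at a time. Coassociativity is not needed here; this lemma is purely a combinatorial identity for the coproduct on products of leaf-decorated single vertices, and it will feed into the subsequent verification that $(\hrts,\col)$ is a coalgebra.
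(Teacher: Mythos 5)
Your proposal is correct and follows essentially the same route as the paper's proof: induction on $m$ with the base case read off from Eq.~(\mref{eq:dele}), the induction step driven by the recursion $\col(\bullet_{x_{1}}\cdots\bullet_{x_{m}})=\bullet_{x_{1}}\cdot\col(\bullet_{x_{2}}\cdots\bullet_{x_{m}})+\col(\bullet_{x_{1}})\cdot(\bullet_{x_{2}}\cdots\bullet_{x_{m}})+\lambda\,\bullet_{x_{1}}\ot\bullet_{x_{2}}\cdots\bullet_{x_{m}}$, the bimodule actions of Eq.~(\mref{eq:dota}), and the same endpoint bookkeeping (in the paper the explicit $+\lambda\,\bullet_{x_{1}}\ot\bullet_{x_{2}}\cdots\bullet_{x_{m}}$ cancels the $-\lambda\,\bullet_{x_{1}}\ot\bullet_{x_{2}}\cdots\bullet_{x_{m}}$ arising from $\col(\bullet_{x_{1}})\cdot(\cdots)$, whose remaining pieces supply exactly the $i=0$ term of the $\lambda$-sum and the $i=1$ term of the $\mu$-sum, as your term-by-term matching would reveal).
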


\begin{proof}
	We prove the result by induction on $m\geq 1$. For the initial step of $m=1$, we have
	$$\col(\bullet_{x_{1}})=\mu (\etree \ot \etree)-\lambda(\bullet_{x_{1}} \ot \etree+\etree\ot\bullet_{x_{1}}) ,$$
	and the result is true trivially. For the induction step of $m\geq 2$, we get
	\allowdisplaybreaks{
		\begin{align*}
			\col(\bullet_{x_{1}}\cdots \bullet_{x_{m}})
			=&\ \bullet_{x_{1}} \cdot \col(\bullet_{x_{2}}\cdots\bullet_{x_{m}})+ \col(\bullet_{x_{1}})\cdot (\bullet_{x_{2}}\cdots\bullet_{x_{m}})
			+\lambda(\bullet_{x_{1}}\ot \bullet_{x_{2}}\cdots\bullet_{x_{m}})  \quad (\text{by Eq.~(\ref{eq:dele})})\\
			=&\ \bullet_{x_{1}} \cdot \col(\bullet_{x_{2}}\cdots\bullet_{x_{m}})+\Big(\mu (\etree \ot \etree)-\lambda (\bullet_{x_{1}}\ot \etree+\etree\ot \bullet_{x_{1}})\Big)\cdot (\bullet_{x_{2}}\cdots\bullet_{x_{m}})\\
			& +\lambda(\bullet_{x_{1}}\ot \bullet_{x_{2}}\cdots\bullet_{x_{m}}) \quad (\text{by Eq.~(\mref{eq:dele})})\\
			=&\ \bullet_{x_{1}} \cdot \col(\bullet_{x_{2}}\cdots\bullet_{x_{m}})
			+\mu(\etree \ot \bullet_{x_{2}} \cdots\bullet_{x_{m}})
			-\lambda(\bullet_{x_{1}}\ot \bullet_{x_{2}} \cdots\bullet_{x_{m}})
			\\
			&-\lambda(\etree \otimes\bullet_{x_{1}}\bullet_{x_{2}} \cdots\bullet_{x_{m}}) +\lambda(\bullet_{x_{1}}\ot \bullet_{x_{2}}\cdots\bullet_{x_{m}}) \quad (\text{by Eq.~(\mref{eq:dota})})\\
			=&\ \bullet_{x_{1}} \cdot \col(\bullet_{x_{2}}\cdots\bullet_{x_{m}})
			+\mu(\etree \ot \bullet_{x_{2}} \cdots\bullet_{x_{m}})
			-\lambda(\etree \otimes\bullet_{x_{1}}\bullet_{x_{2}} \cdots\bullet_{x_{m}})\\
			=&\ \bullet_{x_{1}} \cdot \left( \mu \sum_{i=2}^{m}\bullet_{x_{2}}\cdots\bullet_{x_{i-1}}\otimes\bullet_{x_{i+1}}\cdots\bullet_{x_{m}}
			-\lambda
			\sum_{i=1}^{m}\bullet_{x_{2}}\cdots\bullet_{x_{i}}\otimes\bullet_{x_{i+1}}
			\cdots\bullet_{x_{m}}\right)\\
			&\ +\mu(\etree \ot \bullet_{x_{2}} \cdots\bullet_{x_{m}})
			-\lambda(\etree \otimes\bullet_{x_{1}} \cdots\bullet_{x_{m}})
			\quad (\text{by the induction hypothesis})\\
			=&\ \mu \sum_{i=2}^{m}\bullet_{x_{1}} \bullet_{x_{2}}\cdots\bullet_{x_{i-1}}\otimes\bullet_{x_{i+1}}\cdots\bullet_{x_{m}}
			-\lambda
			\sum_{i=1}^{m}\bullet_{x_{1}} \bullet_{x_{2}}\cdots\bullet_{x_{i}}\otimes\bullet_{x_{i+1}}
			\cdots\bullet_{x_{m}}\\
			&\ +\mu(\etree \ot \bullet_{x_{2}} \cdots\bullet_{x_{m}})
			-\lambda(\etree \otimes\bullet_{x_{1}} \cdots\bullet_{x_{m}})\\
			=&\ \mu \sum_{i=1}^{m}\bullet_{x_{1}}\cdots\bullet_{x_{i-1}}\otimes\bullet_{x_{i+1}}\cdots\bullet_{x_{m}}
			-\lambda
			\sum_{i=0}^{m}\bullet_{x_{1}}\cdots\bullet_{x_{i}}\otimes\bullet_{x_{i+1}}\cdots\bullet_{x_{m}},
		\end{align*}
	}
	as required.
\end{proof}

\begin{lemma}\label{lem:colff}
	Let $F_1, F_2\in \hrts$. Then
	$$\col(F_1 F_2) = F_1 \cdot \col(F_2) + \col(F_1) \cdot F_2+\lambda (F_1\ot F_2).$$
\end{lemma}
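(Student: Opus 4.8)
The plan is to reduce to basis forests and then induct on the breadth of the left factor. By bilinearity of $\col$ and of the two actions in Eq.~(\mref{eq:dota}), it suffices to prove the identity for basis elements $F_1, F_2 \in \rfs$. I first dispose of the two degenerate cases using $\col(\etree) = -\lambda(\etree \ot \etree)$ from Remark~\mref{remk:4rem}~(\mref{remk:units}): if $F_2 = \etree$, then $F_1 \cdot \col(\etree) = -\lambda(F_1 \ot \etree)$ cancels against $\lambda(F_1 \ot \etree)$, leaving $\col(F_1)\cdot \etree = \col(F_1) = \col(F_1 F_2)$, and the case $F_1 = \etree$ is symmetric. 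Hence I may assume $\bre(F_1) \geq 1$ and $\bre(F_2) \geq 1$.

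For the base case $\bre(F_1) = 1$, I write $F_1 = T_1$ with $T_1 \in \rts$. Then $F_1 F_2 = T_1 F_2$ has breadth $\geq 2$, so its coproduct is computed by peeling off the first tree --- via Eq.~(\mref{eq:delee1}) when $\dep(F_1 F_2) \geq 1$, or via the third branch of Eq.~(\mref{eq:dele}) when $F_1 F_2$ is a product of trivial trees $\bullet_x$. In either case this yields exactly $T_1 \cdot \col(F_2) + \col(T_1)\cdot F_2 + \lambda(T_1 \ot F_2)$, which is the desired identity for $F_1 = T_1$.

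For the inductive step $\bre(F_1) = k \geq 2$, I write $F_1 = T_1 G_1$ with $T_1$ the first tree and $\bre(G_1) = k - 1$. Since $F_1 F_2 = T_1(G_1 F_2)$ has breadth $\geq 2$, unfolding the definition of $\col$ gives
\[
\col(F_1 F_2) = T_1 \cdot \col(G_1 F_2) + \col(T_1)\cdot(G_1 F_2) + \lambda\, T_1 \ot G_1 F_2 .
\]
Applying the induction hypothesis to $\col(G_1 F_2)$ and expanding $\col(F_1) = \col(T_1 G_1)$ by the same defining recursion, the identity reduces to a term-by-term comparison after the actions of Eq.~(\mref{eq:dota}) are applied: the term $T_1 \cdot (\lambda\, G_1 \ot F_2) = \lambda(T_1 G_1 \ot F_2) = \lambda(F_1 \ot F_2)$ supplies the weight term of the target, the pieces $T_1 \cdot \col(G_1)\cdot F_2$, $\col(T_1)\cdot(G_1 F_2)$ and $\lambda\, T_1 \ot G_1 F_2$ together reassemble into $\col(F_1)\cdot F_2$, and $T_1 \cdot G_1 \cdot \col(F_2) = F_1 \cdot \col(F_2)$.

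The routine part is the algebra; the step needing genuine care --- and the main source of potential error --- is the interplay between the weight-$\lambda$ terms and the left/right actions of Eq.~(\mref{eq:dota}). In particular one must verify that the surplus term $\lambda\, T_1 \ot G_1 F_2$ produced by peeling off $T_1$ is precisely the weight term hidden inside $\col(T_1 G_1)\cdot F_2 = \bigl(T_1 \cdot \col(G_1) + \col(T_1)\cdot G_1 + \lambda\, T_1 \ot G_1\bigr)\cdot F_2$, so that the two expansions match without leftover terms. Making this matching explicit, together with the associativity of the left and right actions, is the crux of the argument.
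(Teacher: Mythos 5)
Your proposal is correct and takes essentially the same route as the paper's proof: reduce to basis forests by linearity, dispose of the empty-forest cases via $\col(\etree)=-\lambda(\etree\ot\etree)$, and then peel off the leftmost tree of $F_1$, using the induction hypothesis on the shortened left factor and reassembling $\col(T_1G_1)\cdot F_2$ through the actions of Eq.~(\mref{eq:dota}). The only cosmetic difference is that you induct on $\bre(F_1)$ alone while the paper inducts on $\bre(F_1)+\bre(F_2)$; since the defining recursions in Eqs.~(\mref{eq:dele}) and~(\mref{eq:delee1}) only ever strip the left factor, the two inductions amount to the same argument (and your explicit remark that the depth-$0$ branch of Eq.~(\mref{eq:dele}) also peels the first tree is a point the paper glosses over).
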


\begin{proof}
	It suffices to consider basis elements $F_1, F_2\in \rfs$ by linearity, and we shall use this technique tacitly in the later proofs of this paper.
	If $\bre(F_1)=0$ or $\bre(F_2)=0$, without loss of generality, letting $\bre(F_1)=0$, then $F_1=1$ and by Eq.~(\mref{eq:dele}),
	\begin{align*}
		\col(F_1 F_2)=\col(1 F_2)&=\col(F_2)-\lambda(1\ot F_2)+\lambda(1\ot F_2)\\
		&=\col(F_2)-\lambda(1\ot 1)\cdot F_2+\lambda(1\ot F_2)\\
		&=\col(F_2)+\col(1)\cdot F_2+\lambda(1\ot F_2)\\
		&=1\cdot\col(F_2)+\col(1)\cdot F_2+\lambda(1\ot F_2).
	\end{align*}
	If $\bre(F_1)\geq1$ and $\bre(F_2)\geq1$, we proceed to prove the result by induction on the sum of breadths $\bre(F_1)+ \bre(F_2)\geq 2$. For the initial step of $\bre(F_1)+ \bre(F_2)= 2$, we have $F_1=T_1$ and $F_2=T_2$ for some decorated planar rooted trees $T_1, T_2\in \rts$. Using Eq.~(\mref{eq:delee1}), we have
	\begin{align*}
		\col(F_1 F_2) =\col(T_1 T_2)&= T_1 \cdot \col(T_2) + \col(T_1) \cdot T_2+\lambda (T_1\ot T_2)\\
		&=F_1 \cdot \col(F_2) + \col(F_1) \cdot F_2+\lambda (F_1\ot F_2).
	\end{align*}
	For the induction step of $\bre(F_1)+ \bre(F_2)\geq 3$, without loss of generality, we may suppose $\bre(F_2)\geq \bre(F_1)\geq 1$. If $\bre(F_1)=1$ and $\bre(F_2)\geq2$ , we may write $F_1=T_1$ for some decorated planar rooted trees $T_1\in \rts$. Applying Eq.~(\mref{eq:delee1}), we have
	\begin{align*}
		\col(F_1 F_2) =\col(T_1 F_2)&= T_1 \cdot \col(F_2) + \col(T_1) \cdot F_2+\lambda (T_1\ot F_2)\\
		&=F_1 \cdot \col(F_2) + \col(F_1) \cdot F_2+\lambda (F_1\ot F_2).
	\end{align*}
	If $\bre(F_1)\geq2$, we can
	write $F_1=T_{1}{F_{1}}'$ with $\bre(T_{1})=1$ and $\bre({F_{1}}') = \bre(F_1) -1 $. Then
	\begin{align*}
		&\col(F_1 F_2)=\col(T_{1}{F_{1}}' F_2)\\
		&=T_{1}\cdot\col({F_{1}}'F_2)+\col(T_{1})\cdot({F_{1}}'F_2)+\lambda(T_1\ot {F_{1}}'F_2)\quad (\text{by Eq.~(\mref{eq:delee1})})\\
		&=T_{1}\cdot\Big({F_{1}}' \cdot \col(F_2) + \col({F_{1}}') \cdot F_2+\lambda({F_{1}}'\ot F_2)\Big)+\col(T_{1})\cdot({F_{1}}'F_2)+\lambda(T_1\ot {F_{1}}'F_2) \\
		&\hspace{8cm} (\text{by\ the\ induction\ hypothesis})\\
		&=(T_{1}{F_{1}}')\cdot\col(F_2)+T_{1}\cdot\col({F_{1}}')\cdot F_2+\lambda(T_1{F_{1}}'\ot F_2)+\col(T_{1})\cdot ({F_{1}}' F_2)+\lambda(T_1\ot {F_{1}}'F_2) \\
		&=(T_{1}{F_{1}}')\cdot \col(F_2)+\Big(T_{1}\cdot \col({F_{1}}') +\col(T_{1}) \cdot {F_{1}}'+\lambda (T_1\ot {F_{1}}' )\Big)\cdot F_2+\lambda(T_1{F_{1}}'\ot F_2)\\
		&=(T_{1}{F_{1}}')\cdot \col(F_2)+\col(T_1{F_{1}}')\cdot F_2+\lambda(T_1{F_{1}}'\ot F_2)\\
		&=F_1\cdot \col(F_2)+\col(F_1)\cdot F_2 +\lambda(F_1\ot F_2)\quad \quad \ \ \text{(by\ the\ induction\ hypothesis)}.
	\end{align*}
	This completes the proof.
\end{proof}

The following lemma shows that $\hrts$ is closed under the coproduct $\col$.
\begin{lemma} \label{lem:cclosed}
	For $F\in \hrts$,
	\begin{equation}
		\col(F)\in \hrts\ot\hrts.
		\mlabel{fact:closed}
	\end{equation}
\end{lemma}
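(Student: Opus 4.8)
By linearity it suffices to treat a basis element $F\in\rfs$, so the plan is to prove $\col(F)\in\hrts\ot\hrts$ by exactly the two-layered induction — outer on $\dep(F)$, inner on $\bre(F)$ — that was used to \emph{define} $\col$ in Eqs.~(\mref{eq:dele}), (\mref{eq:dbp}) and (\mref{eq:delee1}). The guiding principle is that every elementary operation occurring in the recursion already preserves the tensor square, so closure is simply inherited at each step. Before starting the induction I would isolate two stability facts: (i) the grafting operator satisfies $B^+_\omega(\hrts)\subseteq\hrts$, whence $(\id\ot B^+_\omega)(\hrts\ot\hrts)\subseteq\hrts\ot\hrts$; and (ii) since $\hrts$ is an algebra under concatenation, the left and right actions of Eq.~(\mref{eq:dota}) send $\hrts\cdot(\hrts\ot\hrts)$ and $(\hrts\ot\hrts)\cdot\hrts$ back into $\hrts\ot\hrts$.

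For the base case $\dep(F)=0$, the forest $F$ is either $\etree$ or a product $\bullet_{x_1}\cdots\bullet_{x_m}$, and Lemma~\mref{lem:rt11} exhibits $\col(F)$ explicitly as a $\bfk$-linear combination of tensors of such forests; these manifestly lie in $\hrts\ot\hrts$.

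For the inductive step $\dep(F)=n\geq 1$, I would further induct on $\bre(F)$. If $\bre(F)=1$, write $F=B^+_\omega(\lbar{F})$; then $\lbar{F}$ has strictly smaller depth, so $\col(\lbar{F})\in\hrts\ot\hrts$ by the outer hypothesis, and Eq.~(\mref{eq:dbp}) presents $\col(F)$ as a combination of $B^+_\omega(\lbar{F})\ot\etree$, $\lbar{F}\ot\etree$ and $(\id\ot B^+_\omega)\col(\lbar{F})$, each in $\hrts\ot\hrts$ by fact (i) and the evident membership of the first two terms. If $\bre(F)=m\geq 2$, write $F=T_1\cdots T_m$; then $T_1$ and $T_2\cdots T_m$ have strictly smaller breadth and depth at most $n$, so $\col(T_1),\col(T_2\cdots T_m)\in\hrts\ot\hrts$ by the inner hypothesis, and Eq.~(\mref{eq:delee1}) then places $\col(F)$ in $\hrts\ot\hrts$ by fact (ii) together with $T_1\ot T_2\cdots T_m\in\hrts\ot\hrts$.

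The only point genuinely requiring care — and the sole place an error could slip in — is the well-foundedness of the recursion: I must verify that each reduction strictly decreases the lexicographic pair $(\dep(F),\bre(F))$, so that the two induction hypotheses invoked above are legitimately available. Peeling off a root via $B^+_\omega$ strictly lowers the depth, while splitting $T_1\cdots T_m$ strictly lowers the breadth without raising the depth, so the pair does decrease and the induction closes. Beyond this bookkeeping there is no real obstacle, as the closure is an automatic consequence of the stability of $B^+_\omega$ and of the bimodule actions.
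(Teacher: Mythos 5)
Your proposal is correct and matches the paper's own proof in all essentials: both run the outer induction on $\dep(F)$ and the inner induction on $\bre(F)$, settle the base case via Lemma~\mref{lem:rt11} (with $\col(\etree)=-\lambda(\etree\ot\etree)$ read off directly from Eq.~(\mref{eq:dele})), handle $\bre(F)=1$ through the cocycle formula~(\mref{eq:dbp}) together with stability of $\id\ot B^+_\omega$, and handle $\bre(F)\geq 2$ through Eq.~(\mref{eq:delee1}) and the bimodule actions of Eq.~(\mref{eq:dota}). The only cosmetic difference is that you peel off one tree $T_1$ at a time where the paper iterates the expansion into the full sum $\sum_i (T_1\cdots T_{i-1})\cdot\col(T_i)\cdot(T_{i+1}\cdots T_{m+1})$, and your explicit check that each recursive call strictly decreases the pair $(\dep,\bre)$ is exactly the bookkeeping implicit in the paper's two-layer induction.
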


%\begin{proof}

%By the methods of decoration of planar rooted forests introduced in Subsection~\mref{sucsec:deco}, it is enough to show that $\col(F)$ for basis elements $F\in \rfs$ is a sum of tensor products of decorated planar rooted forests whose internal vertices are not decorated by $X$.
%Then this result follows by the definition of $\col$ and the induction on $\dep(F)\geq 0$.
%\end{proof}

\begin{proof}
	We prove the result by induction on $\dep(F)\geq 0$ for basis elements $F\in \rfs$.
	For the initial step of $\dep(F)=0$,
	we have $F=\bullet_{x_1}\ldots\bullet_{x_{m}}$ for some $m\geq 0$, with the convention that $F=1$ when $m=0$.
	When $m=0$, we have
	\begin{align*}
		\col(F)=\col(1)=-\lambda(1\ot 1)\in  \hrts \ot \hrts.
	\end{align*}
	When $m\geq 1$, by Lemma~\mref{lem:rt11}, we obtain
	\begin{align*}
		\col(F)=\col(\bullet_{x_1}\ldots\bullet_{x_{m}})
		=& \mu \sum_{i=1}^{m}\bullet_{x_{1}}\cdots\bullet_{x_{i-1}}\otimes\bullet_{x_{i+1}}\cdots\bullet_{x_{m}}
		-\lambda
		\sum_{i=0}^{m}\bullet_{x_{1}}\cdots\bullet_{x_{i}}\otimes\bullet_{x_{i+1}}\cdots\bullet_{x_{m}}\\
		&\in\hrts \ot \hrts.
	\end{align*}
	
	Suppose that Eq.~(\mref{fact:closed}) holds for $\dep(F) \leq n$ for an $n\geq 0$ and consider the case of $\dep(F) =n+1$.
	For this case, we apply the induction on breadth $\bre(F)$. Since $\dep(F)=n+1 \geq 1$, we get $F\neq 1$ and $\bre(F)\geq 1$.
	If $\bre(F)=1$, since $\dep(F)\geq 1$, we have $F = B_{\omega}^{+}(\lbar{F})$ for some $\omega\in \Omega$ and $\lbar{F} \in \hrts$. By Eq.~(\mref{eq:dbp}), we have
	\begin{align*}
		\col(F)= \col\Big(B_{\omega}^{+}(\lbar{F})\Big) =-\lambda (F\ot 1)+\mu(\lbar{F}\ot 1)+(\id\ot B_{\omega}^{+})\col(\lbar{F}).
	\end{align*}
	By the induction hypothesis on $\dep(F)$,
	$$\col(\lbar{F})\in \hrts \ot \hrts \,\text{ and so }\, (\id\ot B_{\omega}^{+})\col(\lbar{F})\in \hrts\ot\hrts.$$
	Moreover,  $-\lambda(F\ot 1)\in \hrts \ot \hrts$ and $\mu(\lbar{F}\ot 1)\in \hrts \ot \hrts$ follows from $F\in \hrts$ and $\lbar{F}\in \hrts$.
	Hence
	\begin{align*}
		-\lambda({F}\ot 1)+\mu(\lbar{F}\ot 1)+(\id\ot B_{\omega}^{+})\col(\lbar{F})\in \hrts\ot\hrts.
	\end{align*}
	
	Assume that Eq.~(\mref{fact:closed}) holds for  $\dep(F) = n+1$ and $\bre(F)\leq m$, in addition to $\dep(F)\leq n$ by the first induction hypothesis, and consider the case of
	$\dep(F) = n+1$ and $\bre(F) =m+1\geq 2$.
	Then we may write $F=T_{1}T_{2}\cdots T_{m+1}$ for some $T_1,\ldots , T_{m+1} \in \rts$ and so
	\begin{align*}
		\col(F)=&\col(T_{1}T_{2}\cdots T_{m+1})\\
		=&\ T_{1}\cdot \col(T_{2}\cdots T_{m+1})+\col(T_{1})\cdot (T_{2}\cdots T_{m+1})+\lambda(T_1\ot T_2\cdots T_{m+1}) \quad (\text{by Eq.~(\mref{eq:delee1})})\\
		=&\ \cdots=\sum_{i =1}^{m+1} (T_{1} \cdots T_{i-1}) \cdot\col(T_{i}) \cdot (T_{i+1}\cdots T_{m+1})+\lambda \sum_{i =1}^{m} T_{1} \cdots T_{i}\ot T_{i+1}\cdots T_{m+1},
	\end{align*}
	with the convention that $T_1T_0=1$ and $T_{m+2}T_{m+1}=1$.
	By the induction hypothesis on breadth, we have
	$$ \col(T_i) \in \hrts\ot \hrts,$$
	hence by Eq.~(\mref{eq:dota}),
	$$\sum_{i =1}^{m+1} (T_{1} \cdots T_{i-1}) \cdot\col(T_{i}) \cdot (T_{i+1}\cdots T_{m+1})\in \hrts\ot \hrts.$$
	Thus
	$$\col(F)=\col(T_{1}T_{2}\cdots T_{m+1})\in \hrts\ot \hrts.$$
	This completes the induction on the breadth and hence the induction on the depth.
\end{proof}

We now state our first main result in this subsection.

\begin{theorem}
	The pair $(\hrts, \col)$ is a coalgebra (maybe without counit).
	\mlabel{thm:rt1}
\end{theorem}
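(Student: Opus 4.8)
The plan is to prove that $\col$ is coassociative, i.e.\ that $(\id\ot\col)\col=(\col\ot\id)\col$ on $\hrts$; since a coalgebra here is allowed to have no counit, this coassociativity is the only thing to check. I would argue by a double induction that mirrors the inductive definition of $\col$ in Eqs.~(\mref{eq:dele})--(\mref{eq:delee1}): an outer induction on the depth $\dep(F)$ of a basis forest $F\in\rfs$ and, for each fixed depth, an inner induction on the breadth $\bre(F)$. By linearity it suffices to treat basis elements.

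The core observation is that coassociativity propagates through the concatenation product, and this is exactly where Lemma~\mref{lem:colff} is used. First I would establish the reduction: if $\col$ is coassociative on forests $G$ and $H$, then it is coassociative on $GH$. This follows formally from the weighted derivation rule $\col(GH)=G\cdot\col(H)+\col(G)\cdot H+\lambda(G\ot H)$ by applying $\id\ot\col$ and $\col\ot\id$ to it and re-expanding every $\col$ of a product again via Lemma~\mref{lem:colff}. After regrouping through the bimodule actions of Eq.~(\mref{eq:dota}), both sides decompose into five matching blocks, namely the three iterated terms $\sum GH_1\ot H_2\ot H_3$, $\sum G_1\ot G_2H_1\ot H_2$, $\sum G_1\ot G_2\ot G_3H$ together with the two $\lambda$-terms $\lambda\sum G_1\ot G_2\ot H$ and $\lambda\sum G\ot H_1\ot H_2$, each of which invokes only the coassociativity of $G$ or of $H$. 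For $F=T_1\cdots T_m$ with $m\ge 2$ the factors $G=T_1$ and $H=T_2\cdots T_m$ have strictly smaller breadth and no larger depth, so the inner induction hypothesis applies and the breadth-$\ge2$ case is settled.

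This leaves the cases of breadth at most $1$. For $F=\etree$ one reads off $(\id\ot\col)\col(\etree)=\lambda^2(\etree\ot\etree\ot\etree)=(\col\ot\id)\col(\etree)$ from $\col(\etree)=-\lambda(\etree\ot\etree)$, and for $F=\bullet_x$ a short direct computation from Eq.~(\mref{eq:dele}) shows both sides equal $-2\lambda\mu(\etree\ot\etree\ot\etree)+\lambda^2(\bullet_x\ot\etree\ot\etree+\etree\ot\bullet_x\ot\etree+\etree\ot\etree\ot\bullet_x)$. The decisive case is $F=B_\omega^+(\lbar{F})$ with $\dep(F)\ge 1$ and $\bre(F)=1$, where $\dep(\lbar{F})=\dep(F)-1$ so the outer induction hypothesis gives coassociativity of $\lbar{F}$. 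Here I would insert $\col(F)=-\lambda F\ot\etree+\mu\lbar{F}\ot\etree+(\id\ot B_\omega^+)\col(\lbar{F})$ from the pair weight $(\lambda,\mu)$ cocycle condition (Eq.~(\mref{eq:cdbp})) into both $(\id\ot\col)\col(F)$ and $(\col\ot\id)\col(F)$, re-expanding the inner $\col\, B_\omega^+$ again by Eq.~(\mref{eq:cdbp}) and using $\col(\etree)=-\lambda(\etree\ot\etree)$. Writing the iterated coproduct of $\lbar{F}$ as $\sum\lbar{F}_1\ot\lbar{F}_2\ot\lbar{F}_3$ (well defined by coassociativity of $\lbar{F}$), both sides collapse to the common value
\[
\lambda^2(F\ot\etree\ot\etree)-\lambda\mu(\lbar{F}\ot\etree\ot\etree)-\lambda\sum\lbar{F}_1\ot B_\omega^+(\lbar{F}_2)\ot\etree+\mu\sum\lbar{F}_1\ot\lbar{F}_2\ot\etree+\sum\lbar{F}_1\ot\lbar{F}_2\ot B_\omega^+(\lbar{F}_3).
\]

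I expect the main obstacle to be precisely this grafting step: it is the only point where the specific pair weight $(\lambda,\mu)$ enters, and the two correction terms $-\lambda B_\omega^+\ot\etree$ and $\mu\,\id\ot\etree$ built into Eq.~(\mref{eq:cdbp}) must conspire so that every $\lambda$- and $\mu$-weighted contribution lands in the same tensor slot on both sides. The product-reduction step, though the most term-heavy, is conceptually routine once Lemma~\mref{lem:colff} is available; the genuine content is that Eq.~(\mref{eq:cdbp}) is exactly the compatibility required to lift coassociativity one level up in depth.
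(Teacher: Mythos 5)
Your proposal is correct and follows the same skeleton as the paper's proof: a double induction on $\dep(F)$ and then $\bre(F)$, with Lemma~\mref{lem:colff} powering the breadth-$\geq 2$ case via the five-block Sweedler decomposition (your five blocks are exactly the terms appearing in the paper's computation), and the pair weight cocycle condition Eq.~(\mref{eq:cdbp}) handling the grafting case $F=B_\omega^+(\lbar{F})$; your common value for both iterated coproducts in that case matches the paper's verbatim. The one organizational difference is the depth-$0$ base case: the paper verifies coassociativity on $\bullet_{x_1}\cdots\bullet_{x_m}$ by a page-long explicit double-sum computation using the closed formula of Lemma~\mref{lem:rt11}, whereas you route all forests of breadth $\geq 2$ (including strings of dots) through the uniform product-reduction step, so you only need the direct checks for $\etree$ and $\bullet_x$ -- both of which you state correctly. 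This is a genuine streamlining: since Eq.~(\mref{eq:dele}) defines $\col$ on $\bullet_{x_1}\cdots\bullet_{x_m}$ by the very derivation rule that Lemma~\mref{lem:colff} extends, the explicit index manipulation is redundant for coassociativity purposes. One small completeness point: the theorem also requires knowing that $\col$ actually maps $\hrts$ into $\hrts\ot\hrts$, which the paper records separately as Lemma~\mref{lem:cclosed}; your remark that coassociativity ``is the only thing to check'' glosses over this, though the verification is a routine induction of the same shape and does not affect the substance of your argument.
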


\begin{proof}
	By Lemma~\mref{lem:cclosed}, we only need to verify the the  coassociative law
	\begin{equation}
		(\id\otimes \col)\col(F)=(\col\otimes \id)\col(F)\,\text{ for } F\in \rfs,
		\mlabel{eq:coass}
	\end{equation}
	which will be proved by induction on $\dep(F)\geq 0$.
	For the initial step of $\dep(F)=0$, we have $F=\bullet_{x_{1}}\bullet_{x_{2}}\cdots\bullet_{x_{m}}$ for some $m\geq 0$, with the convention that $F=\etree$ if $m=0$.
	When $m=0$, we have
	\begin{align*}
		(\id\otimes \col)\col(F)&=(\id\otimes \col)\col(1)=-\lambda1\otimes \col(1)=\lambda^2 (1\ot 1\ot 1)=-\lambda \col(1)\ot 1\\
		&=(\col\otimes \id)\col(1).
	\end{align*}
	When $m\geq 1$, on the one hand,
	\allowdisplaybreaks{
		\begin{align*}
			&(\id\ot\col)\col(\bullet_{x_{1}}\bullet_{x_{2}}\cdots\bullet_{x_{m}})\\
			=&\ (\id\ot\col)\left(\mu \sum_{i=1}^{m}\bullet_{x_{1}}\cdots\bullet_{x_{i-1}}\otimes\bullet_{x_{i+1}}\cdots\bullet_{x_{m}}
			-\lambda
			\sum_{i=0}^{m}\bullet_{x_{1}}\cdots\bullet_{x_{i}}
			\otimes\bullet_{x_{i+1}}\cdots\bullet_{x_{m}}\right)\quad (\text{by Lemma~\ref{lem:rt11}})\\
			=& \mu \sum_{i=1}^{m-1}\bullet_{x_{1}}\cdots\bullet_{x_{i-1}}\otimes \col(\bullet_{x_{i+1}}\cdots\bullet_{x_{m}})
			+\mu\bullet_{x_{1}}\cdots\bullet_{x_{m-1}}\ot (-\lambda (\etree\ot \etree))-\lambda\bullet_{x_{1}}\cdots\bullet_{x_{m}}\ot (-\lambda (\etree\ot \etree))
			\\
			&-\lambda\sum_{i=0}^{m-1}\bullet_{x_{1}}\cdots\bullet_{x_{i}}
			\otimes\col(\bullet_{x_{i+1}}\cdots\bullet_{x_{m}})\\
			%it2
			=&\mu \sum_{i=1}^{m-1}\bullet_{x_{1}}\cdots\bullet_{x_{i-1}}\otimes\left(\mu \sum_{j=i+1}^m \bullet_{x_{i+1}}\cdots \bullet_{x_{j-1}} \ot \bullet_{x_{j+1}}\cdots \bullet_{x_{m}}
			-\lambda\sum_{j={i}}^m \bullet_{x_{i+1}}\cdots \bullet_{x_{j}} \ot \bullet_{x_{j+1}}\cdots \bullet_{x_{m}}\right)
			\\
			&-\lambda\sum_{i=0}^{m-1}\bullet_{x_{1}}\cdots\bullet_{x_{i}}
			\otimes \left(\mu \sum_{j=i+1}^m \bullet_{x_{i+1}}\cdots \bullet_{x_{j-1}} \ot \bullet_{x_{j+1}}\cdots \bullet_{x_{m}}
			-\lambda\sum_{j={i}}^m \bullet_{x_{i+1}}\cdots \bullet_{x_{j}} \ot \bullet_{x_{j+1}}\cdots \bullet_{x_{m}}\right)\\
			&+\lambda^2 (\bullet_{x_{1}}\cdots\bullet_{x_{m}}\ot \etree\ot \etree)-\lambda\mu(\bullet_{x_{1}}\cdots\bullet_{x_{m-1}}\ot \etree\ot \etree)\\
			%it3
			=
			&\mu^2\sum_{i=1}^{m-1}\sum_{j=i+1}^{m}\bullet_{x_{1}}\cdots\bullet_{x_{i-1}}
			\ot  \bullet_{x_{i+1}}\cdots \bullet_{x_{j-1}}\ot \bullet_{x_{j+1}}\cdots \bullet_{x_{m}}\\
			&-\lambda\mu\sum_{i=1}^{m}\sum_{j=i}^{m}\bullet_{x_{1}}\cdots\bullet_{x_{i-1}}
			\ot  \bullet_{x_{i+1}}\cdots \bullet_{x_{j}}\ot \bullet_{x_{j+1}}\cdots \bullet_{x_{m}}\\
			&-\lambda\mu\sum_{i=0}^{m-1}\sum_{j=i+1}^{m}\bullet_{x_{1}}\cdots\bullet_{x_{i}}
			\ot  \bullet_{x_{i+1}}\cdots \bullet_{x_{j-1}}\ot \bullet_{x_{j+1}}\cdots \bullet_{x_{m}}\\ &+\lambda^2\sum_{i=0}^{m}\sum_{j=i}^{m}\bullet_{x_{1}}\cdots\bullet_{x_{i}}
			\ot  \bullet_{x_{i+1}}\cdots \bullet_{x_{j}}\ot \bullet_{x_{j+1}}\cdots \bullet_{x_{m}}.
			%=
			%&\mu^2\sum_{i=1}^{m}\sum_{j=i}^{m}\bullet_{x_{1}}\cdots\bullet_{x_{i}}
			%\ot  \bullet_{x_{i}}\cdots \bullet_{x_{j}}\ot \bullet_{x_{j}}\cdots \bullet_{x_{m}}
			%-\lambda\mu\sum_{i=1}^{m}\sum_{j=i}^{m}\bullet_{x_{1}}\cdots\bullet_{x_{i}}
			%\ot  \bullet_{x_{i}}\cdots \bullet_{x_{j}}\ot \bullet_{x_{j+1}}\cdots \bullet_{x_{m}}\\
			%&-\lambda\mu\sum_{i=1}^{m}\bullet_{x_{1}}\cdots\bullet_{x_{i}}
			%\ot  1\ot \bullet_{x_{i}}\cdots \bullet_{x_{m}}
			%-\lambda\mu\sum_{i=0}^{m-1}\sum_{j=i+1}^{m}\bullet_{x_{1}}\cdots\bullet_{x_{i}}
			%\ot  \bullet_{x_{i+1}}\cdots \bullet_{x_{j}}\ot \bullet_{x_{j}}\cdots \bullet_{x_{m}}\\
			%& +\lambda^2\sum_{i=0}^{m}\sum_{j=i}^{m}\bullet_{x_{1}}\cdots\bullet_{x_{i}}
			%\ot  \bullet_{x_{i+1}}\cdots \bullet_{x_{j}}\ot \bullet_{x_{j+1}}\cdots \bullet_{x_{m}}\quad (\text{by expanding the second term }).
			%it5Expand the second item
		\end{align*}
	}
	On the other hand,
	\allowdisplaybreaks{
		\begin{align*}
			&(\col \ot \id)\col(\bullet_{x_{1}}\bullet_{x_{2}}\cdots\bullet_{x_{m}})\\
			=&(\col \ot \id)\left(\mu \sum_{i=1}^{m}\bullet_{x_{1}}\cdots\bullet_{x_{i-1}}\otimes\bullet_{x_{i+1}}\cdots\bullet_{x_{m}}
			-\lambda
			\sum_{i=0}^{m}\bullet_{x_{1}}\cdots\bullet_{x_{i}}
			\otimes\bullet_{x_{i+1}}\cdots\bullet_{x_{m}}\right)\quad (\text{by Lemma~\ref{lem:rt11}})\\
			%it1
			=& \mu \sum_{i=1}^{m}\col(\bullet_{x_{1}}\cdots\bullet_{x_{i-1}})\otimes \bullet_{x_{i+1}}\cdots\bullet_{x_{m}}
			+\lambda^2 (\etree\ot \etree \ot \bullet_{x_{1}}\cdots \bullet_{x_{m}})
			-\lambda\sum_{i=1}^{m}\col(\bullet_{x_{1}}\cdots\bullet_{x_{i}})
			\otimes\bullet_{x_{i+1}}\cdots\bullet_{x_{m}}\\
			%it2
			=& \mu \sum_{i=1}^{m}\left(\mu \sum_{j=1}^{i-1}  \bullet_{x_{1}}\cdots \bullet_{x_{j-1}} \ot \bullet_{x_{j+1}}\cdots \bullet_{x_{i-1}} -\lambda\sum_{j=0}^{i-1} \bullet_{x_{1}}\cdots \bullet_{x_{j}} \ot \bullet_{x_{j+1}}\cdots \bullet_{x_{i-1}} \right)
			\otimes \bullet_{x_{i+1}}\cdots\bullet_{x_{m}}\\
			& -\lambda\sum_{i=1}^{m}\left(\mu \sum_{j=1}^{i} \bullet_{x_{1}}\cdots \bullet_{x_{j-1}} \ot \bullet_{x_{j+1}}\cdots \bullet_{x_{i}} -\lambda\sum_{j=0}^{i} \bullet_{x_{1}}\cdots \bullet_{x_{j}} \ot \bullet_{x_{j+1}}\cdots \bullet_{x_{i}} \right)
			\otimes\bullet_{x_{i+1}}\cdots\bullet_{x_{m}}\\
			& +\lambda^2 (\etree\ot \etree \ot \bullet_{x_{1}}\cdots \bullet_{x_{m}})
			\\
			%\it3
			=&\mu^2\sum_{i=1}^{m}\sum_{j=1}^{i-1}\bullet_{x_{1}}\cdots\bullet_{x_{j-1}}
			\ot  \bullet_{x_{j+1}}\cdots \bullet_{x_{i-1}}\ot \bullet_{x_{i+1}}\cdots \bullet_{x_{m}}\\
			&-\lambda\mu\sum_{i=1}^{m}\sum_{j=0}^{i-1}\bullet_{x_{1}}\cdots\bullet_{x_{j}}
			\ot  \bullet_{x_{j+1}}\cdots \bullet_{x_{i-1}}\ot \bullet_{x_{i+1}}\cdots \bullet_{x_{m}}\\
			&-\lambda\mu\sum_{i=1}^{m}\sum_{j=1}^{i}\bullet_{x_{1}}\cdots\bullet_{x_{j-1}}
			\ot  \bullet_{x_{j+1}}\cdots \bullet_{x_{i}}\ot \bullet_{x_{i+1}}\cdots \bullet_{x_{m}}\\ &+\lambda^2\sum_{i=0}^{m}\sum_{j=0}^{i}\bullet_{x_{1}}\cdots\bullet_{x_{j}}
			\ot  \bullet_{x_{j+1}}\cdots \bullet_{x_{i}}\ot \bullet_{x_{i+1}}\cdots \bullet_{x_{m}}\\
			%it4
			=&\mu^2\sum_{j=1}^{m}\sum_{i=1}^{j-1}\bullet_{x_{1}}\cdots\bullet_{x_{i-1}}
			\ot  \bullet_{x_{i+1}}\cdots \bullet_{x_{j-1}}\ot \bullet_{x_{j+1}}\cdots \bullet_{x_{m}}\\
			&-\lambda\mu\sum_{j=1}^{m}\sum_{i=0}^{j-1}\bullet_{x_{1}}\cdots\bullet_{x_{i}}
			\ot  \bullet_{x_{i+1}}\cdots \bullet_{x_{j-1}}\ot \bullet_{x_{j+1}}\cdots \bullet_{x_{m}}\\
			&-\lambda\mu\sum_{j=1}^{m}\sum_{i=1}^{j}\bullet_{x_{1}}\cdots\bullet_{x_{i-1}}
			\ot  \bullet_{x_{i+1}}\cdots \bullet_{x_{j}}\ot \bullet_{x_{j+1}}\cdots \bullet_{x_{m}}\\ &+\lambda^2\sum_{j=0}^{m}\sum_{i=0}^{j}\bullet_{x_{1}}\cdots\bullet_{x_{i}}
			\ot  \bullet_{x_{i+1}}\cdots \bullet_{x_{j}}\ot \bullet_{x_{j+1}}\cdots \bullet_{x_{m}}\\
			&\hspace{5cm} (\text{by exchanging the index of $i$ and $j$ })\\
			=&\mu^2\sum_{i=1}^{m-1}\sum_{j=i+1}^{m}\bullet_{x_{1}}\cdots\bullet_{x_{i-1}}
			\ot  \bullet_{x_{i+1}}\cdots \bullet_{x_{j-1}}\ot \bullet_{x_{j+1}}\cdots \bullet_{x_{m}}\\
			&-\lambda\mu\sum_{i=0}^{m-1}\sum_{j=i+1}^{m}\bullet_{x_{1}}\cdots\bullet_{x_{i}}
			\ot  \bullet_{x_{i+1}}\cdots \bullet_{x_{j-1}}\ot \bullet_{x_{j+1}}\cdots \bullet_{x_{m}}\\
			&-\lambda\mu\sum_{i=1}^{m}\sum_{j=i}^{m}\bullet_{x_{1}}\cdots\bullet_{x_{i-1}}
			\ot  \bullet_{x_{i+1}}\cdots \bullet_{x_{j}}\ot \bullet_{x_{j+1}}\cdots \bullet_{x_{m}}\\ &+\lambda^2\sum_{i=0}^{m}\sum_{j=i}^{m}\bullet_{x_{1}}\cdots\bullet_{x_{i}}
			\ot  \bullet_{x_{i+1}}\cdots \bullet_{x_{j}}\ot \bullet_{x_{j+1}}\cdots \bullet_{x_{m}}
			\\
			&\hspace{5cm} (\text{by exchanging the summands}).
		\end{align*}
	}
	%Note that
	%\begin{align*}
	%\sum_{i=1}^{m}\bullet_{x_{1}}\cdots\bullet_{x_{i}}
	%\ot  1\ot \bullet_{x_{i}}\cdots \bullet_{x_{m}}=\sum_{j=1}^{m}\bullet_{x_{1}}\cdots\bullet_{x_{j}}
	%\ot  1\ot \bullet_{x_{j}}\cdots \bullet_{x_{m}}.
	%\end{align*}
	Thus
	\begin{align*}
		(\id\ot\col)\col(\bullet_{x_{1}}\bullet_{x_{2}}\cdots\bullet_{x_{m}})=(\col \ot \id)\col(\bullet_{x_{1}}\bullet_{x_{2}}\cdots\bullet_{x_{m}}).
	\end{align*}
	
	Suppose that Eq.~(\mref{eq:coass}) holds for $\dep(F)\leq n$ for an $n\geq 0$ and consider the case of $\dep(F)=n+1$.
	We now apply the induction on breadth. Since $\dep(F)=n+1\geq 1$, we have $F\neq 1$ and $\bre(F)\geq 1$.
	If $\bre(F)=1$, then we may write $F=B_{\omega}^{+}(\lbar{F})$ for some $\lbar{F}\in \rfs$ and $\omega\in \Omega$. Hence
	\allowdisplaybreaks{
		\begin{align*}
			&\ (\id\otimes\col)\col(F)\\
			=&\ (\id\otimes \col)\col(B_{\omega}^{+}(\lbar{F}))\\
			=&\ (\id\otimes \col)\Big(-\lambda B_{\omega}^{+}(\lbar{F}) \otimes \etree+ \mu\lbar{F} \otimes \etree + (\id\otimes B_{\omega}^{+})\col(\lbar{F})\Big)\quad(\text{by Eq.~(\mref{eq:dbp})})\\
			=&\ \lambda^2(F\otimes \etree\otimes \etree)-\lambda\mu(\lbar{F}\otimes \etree\otimes \etree)+(\id\otimes \col B_{\omega}^{+})\col (\lbar{F})\\
			%
			%=&\ \lambda^2F\ot 1\ot 1 +(\id\otimes (\col B_{\omega}^{+}))\col(\lbar{F})  \quad(\text{by  Eq.~(\ref{eq:dele})})\\
			%
			=&\  \lambda^2(F\otimes \etree\otimes \etree)-\lambda\mu(\lbar{F}\otimes \etree\otimes \etree)+\big(\id\otimes (-\lambda B_{\omega}^{+} \otimes \etree )+\mu(\id\ot \etree )+(\id \ot B_{\omega}^{+} )\col)\col (\lbar{F})\\ &\hspace{10cm} (\text{by Eq.~(\mref{eq:cdbp})})\\
			=&\   \lambda^2(F\otimes \etree\otimes \etree)-\lambda\mu(\lbar{F}\otimes \etree\otimes \etree)-\lambda\big((\id \ot B_{\omega}^{+})\col (\lbar{F})\ot \etree \big)+\mu(\id \ot \id \ot \etree)\col(\lbar{F})\\
			&+(\id \ot \id \ot B_{\omega}^{+})(\id\ot \col) \col(\lbar{F})\\
			=&\  \lambda^2(F\otimes \etree\otimes \etree)-\lambda\mu(\lbar{F}\otimes \etree\otimes \etree)-\lambda\big((\id \ot B_{\omega}^{+})\col (\lbar{F})\ot \etree )+\mu(\id \ot \id \ot \etree)\col(\lbar{F})\\
			&+(\id \ot \id \ot B_{\omega}^{+})(\col\ot \id) \col(\lbar{F})
			\quad (\text{by\ the\ induction\ hypothesis})\\
			=&\  -\lambda\big(-\lambda(F\ot \etree)+\mu(\lbar{F}\ot  \etree)+(\id \ot B_{\omega}^{+})\col(\lbar{F})\big)\ot \etree +\mu (\col(\lbar{F})\ot \etree)+(\col \ot B_{\omega}^{+}) \col(\lbar{F})\\
			=&\  -\lambda(\col B_{\omega}^{+}(\lbar{F}) \ot \etree)+\mu (\col(\lbar{F})\ot \etree) +(\col \ot B_{\omega}^{+}) \col(\lbar{F})\\
			=&\ (\col \ot \id  )\Big(-\lambda B_{\omega}^{+}(\lbar{F}) \otimes \etree+ \mu\lbar{F} \otimes \etree + (\id\otimes B_{\omega}^{+})\col(\lbar{F})\Big)\\
			=&\ (\col\otimes \id)\col(F) \quad(\text{by Eq.~(\mref{eq:dbp})}).
		\end{align*}
	}
	Assume that Eq.~(\mref{eq:coass}) holds for $\dep(F)=n+1$ and $\bre(F)\leq m$, in addition to $\dep(F)\leq n$ by the first induction hypothesis.
	Consider the case when $\dep(F)=n+1$ and $\bre(F)=m+1 \geq 2$.
	Then $F=F_{1}F_{2}$ for some $F_1, F_2\in \rfs$ with $0< \bre(F_1), \bre(F_2)< \bre(F)$.
	Using Sweedler notation, we write
	\begin{align*}
		\col(F_1)=\sum_{(F_1)}F_{1(1)}\otimes F_{1(2)} \text{\ and \ } \col(F_2)=\sum_{(F_2)}F_{2(1)}\otimes F_{2(2)}.
	\end{align*}
	Thus
	\allowdisplaybreaks{
		\begin{align*}
			&(\id\ot\col)\col(F_1F_2)\\
			=&\ (\id\ot\col)(F_1 \cdot \col(F_2)+\col(F_1) \cdot F_2+\lambda(F_1\ot F_2))\quad(\text{by Lemma~\mref{lem:colff}})\\
			=&\ (\id\ot\col)\left(\sum_{(F_2)}F_1F_{2(1)}\ot F_{2(2)}+\sum_{(F_1)}F_{1(1)}\ot F_{1(2)}F_2+\lambda(F_1\ot F_2) \right) \ \ (\text{by Eq.~(\ref{eq:dota})})\\
			=&\ \sum_{(F_2)}F_1F_{2(1)}\ot\col(F_{2(2)})+\sum_{(F_1)}F_{1(1)}\ot\col(F_{1(2)}F_2)+\lambda(F_1\ot \col (F_2))\\
			=&\ \sum_{(F_2)}F_1F_{2(1)}\ot F_{2(2)}\ot F_{2(3)}+\sum_{(F_1)}F_{1(1)}\ot\sum_{(F_2)}F_{1(2)}F_{2(1)}\ot F_{2(2)}\\
			&+\sum_{(F_1)}F_{1(1)}\ot F_{1(2)}\ot F_{1(3)}F_2 +\lambda\sum_{(F_1)}F_{1(1)}\ot F_{1(2)}\ot F_2+\lambda F_1\ot\col(F_2)\\
			&\hspace{8cm}(\text{by induction on breadth and Eq.~(\ref{eq:dota})})\\
			=&\ \left(\sum_{(F_2)}F_1F_{2(1)}\ot F_{2(2)}\ot F_{2(3)}+\sum_{(F_1)}F_{1(1)}\ot\sum_{(F_2)}F_{1(2)}F_{2(1)}\ot F_{2(2)}+\lambda F_1\ot\col(F_2)\right)\\
			&+\sum_{(F_1)}F_{1(1)}\ot F_{1(2)}\ot F_{1(3)}F_2 +\lambda\sum_{(F_1)}F_{1(1)}\ot F_{1(2)}\ot F_2\\
			=&\ \left(\sum_{(F_2)}F_1F_{2(1)}\ot F_{2(2)}\ot F_{2(3)}+\sum_{(F_2)}\sum_{(F_1)}F_{1(1)}\ot F_{1(2)}F_{2(1)}\ot F_{2(2)}+\lambda F_1\ot\col(F_2)\right)\\
			&+\sum_{(F_1)}F_{1(1)}\ot F_{1(2)}\ot F_{1(3)}F_2 +\lambda \col(F_1)\ot F_2\\
			=&\ \left(\sum_{(F_2)}F_1F_{2(1)}\ot F_{2(2)}\ot F_{2(3)}+\sum_{(F_2)}\sum_{(F_1)}F_{1(1)}\ot F_{1(2)}F_{2(1)}\ot F_{2(2)}
			+\lambda\sum_{(F_2)} F_1\ot F_{2(1)}\ot F_{2(2)}\right)\\
			&+\sum_{(F_1)}F_{1(1)}\ot F_{1(2)}\ot F_{1(3)}F_2 +\lambda \col(F_1)\ot F_2\\
			=&\ \sum_{(F_2)}\col(F_1F_{2(1)})\ot F_{2(2)}+\sum_{(F_1)}\col(F_{1(1)})\ot F_{1(2)}F_2 +\lambda \col(F_1)\ot F_2\\
			&\hspace{8cm}(\text{by induction on breadth})\\
			=&\ (\col\ot\id)\left(\sum_{(F_{2})}F_1F_{2(1)}\ot F_{2(2)}+\sum_{(F_1)}F_{1(1)}\ot F_{1(2)}F_2+\lambda F_1\ot F_2\right)\\
			=&\ (\col\ot\id)(F_1 \cdot \col(F_2)+\col(F_1) \cdot F_2+\lambda(F_1\ot F_2))  \quad(\text{by Eq.~(\mref{eq:dota})})\\
			=&\ (\col\ot\id)\col(F_1F_2) \quad(\text{by Lemma~\mref{lem:colff}}).
		\end{align*}
	}
	This completes the induction on the breadth and hence the induction on the depth.
\end{proof}

Now we arrive at our second main result in this subsection.
\begin{theorem}
	The quadruple $(\hrts,\, \conc, \etree, \,\col)$ is an  $\epsilon$-unitary bialgebra of weight $\lambda$.
	\mlabel{thm:rt2}
\end{theorem}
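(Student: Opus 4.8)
The plan is to verify the three defining clauses of an $\epsilon$-unitary bialgebra of weight $\lambda$ (Definition~\mref{def:iub}) for the quadruple $(\hrts,\conc,\etree,\col)$, and to observe that each clause is either standard or already established earlier in the section. First I would dispense with the algebra structure: by construction $\hrts=\bfk M(\rts)$ is the free $\bfk$-module on the free monoid $M(\rts)$, with product the concatenation $\conc=m_{\mathrm{RT}}$ and unit the empty tree $\etree$. Associativity of $\conc$ and the fact that $\etree$ is a two-sided identity are immediate from the monoid structure, so this ingredient requires no genuine work. Second, the coalgebra axioms for $(\hrts,\col)$ — that $\col$ lands in $\hrts\ot\hrts$ and is coassociative — are precisely the content of Theorem~\mref{thm:rt1} (which in turn rests on Lemma~\mref{lem:cclosed}), so I would simply invoke it.

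The crux is the third clause, the weighted derivation rule of Eq.~(\mref{eq:cocycle}), namely
\[
\col(F_1 F_2)=F_1\cdot\col(F_2)+\col(F_1)\cdot F_2+\lambda\,(F_1\ot F_2)\quad\text{for all }F_1,F_2\in\hrts,
\]
where the left and right actions are those of Eq.~(\mref{eq:dota}). But this identity is exactly the statement of Lemma~\mref{lem:colff}, already proved. Combining the routine unitary algebra structure, the coalgebra structure from Theorem~\mref{thm:rt1}, and this compatibility from Lemma~\mref{lem:colff}, Definition~\mref{def:iub}~(\mref{def:iub}) yields at once that $(\hrts,\conc,\etree,\col)$ is an $\epsilon$-unitary bialgebra of weight $\lambda$. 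As a consistency check one may note that the special case $F_1=F_2=\etree$ of the derivation rule forces $\col(\etree)=-\lambda(\etree\ot\etree)$, matching the unit compatibility of Remark~\mref{remk:4rem}~(\mref{remk:units}); this is already built into the definition of $\col$ in Eq.~(\mref{eq:dele}).

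The point to emphasize is that there is no genuinely new obstacle in this theorem: all the substantive effort — closedness of $\col$, its coassociativity, and the weighted Leibniz compatibility between $\col$ and $\conc$ — was discharged in Lemmas~\mref{lem:rt11}, \mref{lem:colff}, \mref{lem:cclosed} and Theorem~\mref{thm:rt1}, where the real difficulty lay in the nested inductions on depth and breadth. Theorem~\mref{thm:rt2} is therefore essentially a corollary that bundles these pieces together. I would also remark, matching the ``maybe without counit'' caveat of Theorem~\mref{thm:rt1}, that the statement asserts only the (possibly non-counitary) $\epsilon$-unitary bialgebra structure, so no counit $\varepsilon$ need be constructed here.
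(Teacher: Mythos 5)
Your proposal is correct and coincides with the paper's own proof, which likewise observes that $(\hrts,\conc,\etree)$ is a unitary algebra and then cites Theorem~\mref{thm:rt1} for the coalgebra structure and Lemma~\mref{lem:colff} for the weighted derivation rule. Your additional remarks (the consistency check $\col(\etree)=-\lambda(\etree\ot\etree)$ and the ``maybe without counit'' caveat) are accurate but not needed.
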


\begin{proof}
	Note that the triple $(\hrts,\, \conc, \etree)$ is a unitary algebra. Then the result follows from Lemma~\mref{lem:colff} and Theorem~\mref{thm:rt1}.
\end{proof}

\begin{remark}
	\begin{enumerate}
		\item
		When we consider an $\epsilon$-unitary bialgebra of weight $\lambda$ with a counit $\varepsilon$, the condition $\lambda$ is invertible in $\bfk$ is needed. In fact, by $\Delta(1)=-\lambda(1\ot1)$, we have $\varepsilon(1)=-\lambda^{-1}$.
		\item We shall prove the converse result and give a formula for the counit in Corollary \mref{cor:counit}.
		%Assume that  $\lambda$ is invertible in $\bfk$. Let the quadruple $(A,m,1,\Delta)$ be an $\epsilon$-unitary bialgebra of weight $\lambda$. Then the quadruple $(A,m,1,-\lambda^{-1}\Delta)$ is an $\epsilon$-unitary bialgebra of weight -1.
		%\item
		%By items (a) and (b), we conclude that it is no use to construct an $\varepsilon$-unitary bialgebra of weight $\lambda$ with a counit unless $\lambda = 1$ or $\lambda = -1$ when $\mathbf{k}$ is a unitary commutative ring. When $\mathbf{k}$ is a field, we just need to consider the case of $\lambda = -1$.
	\end{enumerate}
\end{remark}

\subsection{A combinatorial description of the coproduct $\col$}
In this subsection, we give a combinatorial description of the coproduct $\col$. Let us first recall several order relations on the set of the vertices of a decorated planar rooted forest. See~\mcite{Foi02, Foi09} for more details.

\begin{defn}\label{def:order}\cite[Sec.~2.1]{Foi09}
	Let $F = T_1 \cdots T_n\in \rfs \setminus\{\etree\}$ with $T_1, \cdots, T_n\in \rts$ and $n\geq 1$,
	and let $u,v$ be two vertices of $F$. Then
	\begin{enumerate}
		\item $v\leq_{h}u$ ({\bf being higher}) if there exists a (directed) path from $v$ to $u$ in $F$ and the edges of $F$ being oriented from roots to leaves;
		\item $v\leq_{l}u$ ({\bf being more on the left}) if $u$ and $v$ are not comparable for $\leq_{h}$ and one of the following assertions is satisfied:
		\begin{enumerate}
			\item  $u$ is a vertex of $T_i$ and $v$ is a vertex of $T_j$, with $1\leq i< j \leq n$.
			\item $u$ and $v$ are vertices of the same $T_i$, and $v\leq_{l}u$ in the forest obtained from $T_i$ by deleting its root;
		\end{enumerate}
		\item $v\lhl u$ ({\bf being higher or more on the left}) if $v\leq_{h}u$ or $v\leq_{l}u$.
	\end{enumerate}
	
\end{defn}

\begin{exam}
	Let $F=\tdtroisun{$\alpha$}{$\gamma$}{$\beta$}$. The following arrays give the order relations $\leq_{h}$ and $\leq_{l}$ for the vertices of $F$, respectively.
	The symbol $\times$ means that the vertices are not comparable for the order.
	\begin{table}[H]
		\begin{tabular}{|c|c|c|c|}
			\hline
			$\leq_h$ & $\bullet_\alpha$& $\bullet_\beta$ & $\bullet_\gamma$ \\
			\hline
			$\bullet_\alpha$ & $=$ & $\leq_h$ & $\leq_h$\\
			\hline
			$\bullet_\beta$ & $\geq_{h}$ & = & $\times$  \\
			\hline
			$\bullet_\gamma$ & $\geq_{h}$ & $\times$  & $=$  \\
			\hline
		\end{tabular}\quad \quad \quad \quad
		\begin{tabular}{|c|c|c|c|}
			\hline
			% after \\: \hline or \cline{col1-col2} \cline{col3-col4} ...
			$\leq_l$ & $\bullet_\alpha$& $\bullet_\beta$ & $\bullet_\gamma$ \\
			\hline
			$\bullet_\alpha$ & $=$ & $\times$ & $\times$\\
			\hline
			$\bullet_\beta$ & $\times$ & = & $\geq_{l}$  \\
			\hline
			$\bullet_\gamma$ & $\times$ & $\leq_l$  & $=$  \\
			\hline
		\end{tabular}
	\end{table}
	\noindent Then we have $\bullet_\alpha\lhl \bullet_\gamma\lhl \bullet_\beta$.
\end{exam}

\begin{remark}\cite[Sec.~2.1]{Foi09}
	Let $F\in \rfs$. Then
	\begin{enumerate}
		\item the order ``being higher" $\leq_{h}$ is a partial order, whose Hasse graph is the decorated planar rooted forest $F$;
		\item the order ``being more on the left'' is a  partial order for $F$;
		\item the order ``being higher or more on the left'' is a total order on the vertices of $F$.
	\end{enumerate}
\end{remark}

Using these order relations on the set of vertices of $F$, the first author~\mcite{Foi09} introduced the following concepts.

\begin{defn}\mlabel{def:fid}
	Let $F$ be a decorated planar rooted forest and $V(F)$ the set of vertices of $F$.
	A set $I\subseteq V(F)$ is called a {\bf forest biideal} of $F$, denoted by $I \fid F$, if
	$$ \forall u\in I, \, \forall v\in V(F), \,\, u\lhl v \Rightarrow v\in I.$$
	A forest biideal $I$ is called a {\bf principal forest biideal} if it is generated by a single element of $V(F)$ under the total order $\lhl $, that is, if $u$ is the generator, then any other element $v\in I$ satisfies $u\lhl v$.
	We denote such principal forest biideal by $\langle u\rangle_{\lhl }$.
\end{defn}
%If the forest biideal $I \neq V(F)$,then $I$ is called a {\bf proper biideal} of $F$,denoted by $I \sfid F$.

The forest biideals of $F$ can be characterized as:

\begin{lemma}
	Let $F\in \rfs$. The number of vertices of $F$ is denoted by $n_F$. Let $u_{n_F}\lhl \cdots \lhl u_1 $ be its vertices.
	Then $F$ has exactly $n_F+1$ forest biideals:
	\begin{align*}
		I_k=\{u_1, \ldots, u_k\} \text { for } k = 0, \ldots, n_F,
	\end{align*}
	with the convention that $I_0 = \emptyset$.
	\mlabel{lem:comid1}
\end{lemma}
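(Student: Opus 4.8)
The plan is to leverage the fact that $\lhl$ is a total order on $V(F)$ (as recorded in the Remark preceding Definition~\mref{def:fid}) to reduce the counting problem to a statement about upward-closed sets in a totally ordered set. The key observation is that the defining condition for a forest biideal, namely $u\in I$ and $u\lhl v \Rightarrow v\in I$, says precisely that $I$ is upward-closed with respect to $\lhl$. Since $\lhl$ is a \emph{total} order, listing the vertices as $u_{n_F}\lhl \cdots \lhl u_1$ means the upward-closed sets are exactly the ``final segments'' of this chain, and I claim these are precisely the sets $I_k = \{u_1,\ldots,u_k\}$ for $k=0,\ldots,n_F$.

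First I would establish that each $I_k$ is indeed a forest biideal. Fix $u\in I_k$, so $u = u_j$ for some $1\leq j\leq k$, and suppose $u_j \lhl v$ for some $v\in V(F)$. Writing $v = u_i$, the relation $u_j\lhl u_i$ together with totality and antisymmetry of $\lhl$ forces $i\leq j \leq k$ (since the indexing is decreasing in $\lhl$), hence $v = u_i\in I_k$. The empty set $I_0$ is vacuously a biideal. Second, I would show these exhaust all biideals: let $I$ be any forest biideal, and let $k=|I|$. I claim $I = I_k$. Indeed, take the $\lhl$-largest element of $V(F)\setminus I$ if $I\neq V(F)$; by upward-closedness every vertex strictly $\lhl$-above it lies in $I$, and by totality these together with the structure of the chain pin down $I$ as a final segment. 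More cleanly, I would argue that if $u_j\in I$ then every $u_i$ with $i\leq j$ lies in $I$ (since $u_i\ghl u_j$, i.e. $u_j\lhl u_i$, so upward-closedness applies), which shows $I$ is downward-closed in the index $j$; hence $I=\{u_1,\ldots,u_k\}=I_k$ where $k=\max\{j : u_j\in I\}$ (with $I_0=\emptyset$ when $I$ is empty).

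The counting then follows immediately: the sets $I_0,\ldots,I_{n_F}$ are pairwise distinct (they have distinct cardinalities $0,1,\ldots,n_F$), so $F$ has exactly $n_F+1$ forest biideals. I would present the two inclusions as the two halves of the argument and conclude.

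I do not anticipate a serious obstacle here; the entire content rests on translating the biideal condition into ``upward-closed set'' and invoking totality of $\lhl$. The only point requiring slight care is keeping the direction of the indexing convention straight (the vertices are listed in \emph{decreasing} $\lhl$-order, so smaller index means larger in $\lhl$), since a sign error there would swap which end of the chain the final segments accumulate at. I would state the indexing convention explicitly at the outset to avoid this pitfall, and otherwise the proof is a direct, essentially bookkeeping verification.
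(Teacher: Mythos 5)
Your proof is correct and takes essentially the same route as the paper: the paper gives no details here, deferring to the proof of \cite[Lemma~13]{Foi09}, which rests on precisely your observation that since $\lhl$ is a total order on $V(F)$, the forest biideals (i.e.\ the upward-closed subsets) are exactly the final segments $I_k$ of the chain $u_{n_F}\lhl \cdots \lhl u_1$. Your two-part verification --- each $I_k$ is upward-closed, and conversely any biideal containing $u_j$ must contain every $u_i$ with $i\leq j$, hence equals $I_k$ with $k=\max\{j: u_j\in I\}$ --- is sound (the first, half-finished ``largest element of $V(F)\setminus I$'' sketch is superfluous given your cleaner argument), so your write-up in fact supplies the details the paper outsources to the reference.
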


\begin{proof}
	It is the same as the proof of~\cite[Lemma~13]{Foi09}.
\end{proof}

As a direct consequence of Lemma~\ref{lem:comid1}, we obtain the following.

\begin{coro}
	Let $F\in \rfs$ and let $u_{n_F}\lhl \cdots \lhl u_1 $ be its vertices.  Then every nonempty forest biideal $I \fid F$ is principal:
	\begin{align*}
		I=I_k=\langle u_k\rangle_{\lhl } \text { for some } k\in \{1, \ldots, n_F\}.
	\end{align*}
\end{coro}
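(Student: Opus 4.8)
The plan is to read off the statement directly from Lemma~\ref{lem:comid1}, which already enumerates every forest biideal of $F$ as $I_k = \{u_1, \ldots, u_k\}$ for $k = 0, 1, \ldots, n_F$. First I would note that a nonempty forest biideal $I \fid F$ must, by that lemma, equal $I_k$ for some index $k$, and that $I \neq \emptyset$ forces $k \geq 1$. It therefore suffices to show that each such $I_k$ is principal and to pin down its generator. Recalling Definition~\ref{def:fid}, the principal forest biideal $\langle u\rangle_{\lhl}$ consists of $u$ together with all vertices $v$ satisfying $u \lhl v$, so the natural candidate for the generator of $I_k$ is its $\lhl$-minimal element, namely $u_k$.

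The key step is then the verification that $\langle u_k\rangle_{\lhl} = I_k$. Since (as recorded in the remark following Definition~\ref{def:order}) the relation $\lhl$ is a total order on $V(F)$, and the vertices are listed as $u_{n_F} \lhl \cdots \lhl u_1$, the set of vertices $v$ with $u_k \lhl v$ is exactly $\{u_k, u_{k-1}, \ldots, u_1\} = I_k$. In particular every $u_j \in I_k$ with $j < k$ satisfies $u_k \lhl u_j$, so $u_k$ generates $I_k$ in the sense of Definition~\ref{def:fid}. Combining this with the first step yields $I = I_k = \langle u_k\rangle_{\lhl}$ for some $k \in \{1, \ldots, n_F\}$, which is the assertion.

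There is essentially no obstacle here, as the combinatorial content has been front-loaded into Lemma~\ref{lem:comid1} together with the totality of $\lhl$; the corollary is a purely order-theoretic unwinding of definitions. The only point requiring a moment's care is to select the correct generator, namely the $\lhl$-smallest element $u_k$ of $I_k$ rather than the largest, since a forest biideal is closed \emph{upward} under $\lhl$ and is thus generated by its minimum.
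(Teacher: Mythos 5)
Your proof is correct and follows essentially the same route as the paper's: invoke Lemma~\ref{lem:comid1} to identify a nonempty $I$ with some $I_k$, $k\geq 1$, and then use the totality of $\lhl$ to see that $I_k$ is generated by its $\lhl$-minimal element $u_k$. Your explicit care in selecting the minimum (rather than the maximum) as generator is exactly right, since forest biideals are closed upward under $\lhl$, and this matches the paper's two-line argument.
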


\begin{proof}
	By Lemma~\ref{lem:comid1}, $I=I_k$ for some $k\in \{1, \ldots, n_F\}$. Since $u_1\lhl \cdots \lhl u_{n_F} $, it follows that $I_k = \langle u_k\rangle_{\lhl }.$
\end{proof}

So any decorated planar rooted forest $F$ has exactly $n_F+1$ (principal) biideals, which form
a chain:
\begin{align*}
	\begin{array}{ccccccccccc}
		\emptyset &\subseteq &\langle u_1\rangle_{\lhl }&\subseteq &\cdots &\subseteq &\langle u_k\rangle_{\lhl }&\subseteq &\cdots &\subseteq& \langle u_{n_F}\rangle_{\lhl }\\
		\shortparallel&&\shortparallel&&&&\shortparallel&&&&\shortparallel\\
		I_0 &\subseteq &I_1&\subseteq &\cdots &\subseteq &I_k&\subseteq &\cdots &\subseteq& I_{n_F}
	\end{array}
\end{align*}
Note that for any $k$, $|I_k|=k$.

For any set $I\subseteq V(F)$, let $F_{|I}$ be the derived decorated planar rooted subforest whose vertices are $I$ and edges are the edges of $F$ between elements of $I$. In particular,
\begin{align}
	F_{|\emptyset} := \etree \,\text{ and }\, \etree_{|I}:=\etree.
	\mlabel{eq:efoz1}
\end{align}

\begin{lemma}
	Let $F=B_{\omega}^{+}(\lbar F)$ for some $\omega\in \Omega$ and $\lbar{F}\in \rfs$ and $I \subseteq V(\lbar F)$. Then
	$I\fid \lbar F$ if and only if $I\fid F$.
	\mlabel{lem:bfid1}
\end{lemma}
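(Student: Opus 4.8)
The plan is to reduce the statement to a single order-theoretic observation: the total order $\lhl$ restricted to the common vertex set $V(\lbar{F})\subseteq V(F)$ is the same whether it is computed inside $\lbar{F}$ or inside $F$, while the one extra vertex of $F$ — its new root $r$ decorated by $\omega$ — is the $\lhl$-minimum of $V(F)$. Granting these two facts the lemma is immediate, because by Definition~\ref{def:fid} a forest biideal is precisely an up-set for the total order $\lhl$. So first I would fix notation, writing $\lbar{F}=T_1\cdots T_n$ with $T_i\in\rts$; then $F=B_{\omega}^{+}(\lbar{F})$ is a single tree with $V(F)=V(\lbar{F})\sqcup\{r\}$, and deleting the root $r$ from $F$ recovers exactly the forest $\lbar{F}$.

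The key step is the compatibility of the orders on $V(\lbar{F})$. For $\leq_{h}$ this is clear: a directed (root-to-leaf) path between two vertices of $\lbar{F}$ uses no edge incident to $r$, so both $\leq_{h}$-comparability and $\leq_{h}$ itself are unaffected by the grafting. For $\leq_{l}$ I would invoke Definition~\ref{def:order} directly: two vertices $u,v\in V(\lbar{F})$ now lie in the \emph{same} tree $F$, so the first subcase (vertices in distinct trees) cannot contribute, and the recursive subcase computes $v\leq_{l}u$ in $F$ as $v\leq_{l}u$ in the forest obtained from $F$ by deleting its root, which is precisely $\lbar{F}$. Hence $\leq_{l}$ also agrees on $V(\lbar{F})$, and therefore $\lhl$ restricts to the same total order there. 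Finally, since $r$ is an ancestor of every other vertex, $r\leq_{h}v$ and so $r\lhl v$ for all $v\in V(\lbar{F})$, which makes $r$ the $\lhl$-minimum of $V(F)$.

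With these facts I would conclude both directions. If $I\fid\lbar{F}$, take $u\in I$ and $v\in V(F)$ with $u\lhl v$; when $v\in V(\lbar{F})$ the orders agree and $v\in I$ by hypothesis, while $v=r$ is impossible because $r$ is $\lhl$-minimal and $u\neq r$, so $u\lhl r$ cannot hold — hence $I\fid F$. Conversely, if $I\fid F$ with $I\subseteq V(\lbar{F})$, then for any $u\in I$ and $v\in V(\lbar{F})$ with $u\lhl v$ (equivalently in $\lbar{F}$ or in $F$) we get $v\in I$, so $I\fid\lbar{F}$. The only point that genuinely needs care — and which I would treat as the main obstacle — is the order compatibility of the previous paragraph: making sure the recursive clause for $\leq_{l}$ is applied to the correct ambient object (now that $F$ is a single tree there is no contribution from distinct trees) and checking that the new root $r$ really sits below everything, so that it never has to be forced into a biideal. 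Once these are pinned down, no computation remains.
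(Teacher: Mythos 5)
Your proof is correct and rests on exactly the two facts that the paper's one-line proof invokes: the new root $\bullet_\omega$ is the $\lhl$-minimum of $V(F)$, and forest biideals are precisely the $\lhl$-up-sets (the content of Definition~\mref{def:fid} together with Lemma~\mref{lem:comid1}). The only difference is one of explicitness rather than approach: you spell out, via the recursive clause of Definition~\mref{def:order}, that $\leq_h$, $\leq_l$, and hence $\lhl$ on $V(\lbar{F})$ agree whether computed in $\lbar{F}$ or in $F=B_\omega^+(\lbar{F})$ --- a compatibility the paper's proof uses tacitly.
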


\begin{proof}
	It follows from Lemma~\mref{lem:comid1} and the fact that the root $\bullet_\omega$ is the minimum of $V(F)$ with respect to $\lhl$.
\end{proof}

Let $F\in \rfs$.
For any set $I\subseteq V(F)$, let $F_{|I}$ be the derived decorated planar rooted subforest in $\rfs$ whose vertices are $I$
and whose partial orders $\leq_h$ and $\leq_l$ are the restriction of the partial orders $\leq_h$ and $\leq_l$ of $F$. In particular,
\begin{align*}
	F_{|\emptyset}& = \etree,&F_{\mid V(F)}&=F.
	\mlabel{eq:efoz}
\end{align*}
\begin{exam} If $F=\tdquatredeux{$\delta$}{$\gamma$}{$\beta$}{$\alpha$}$, indexing the vertices of $F$ according to $\lhl$, as $\tdquatredeux{4}{3}{2}{1}$,
	we obtain
	\begin{align*}
		F_{\mid \emptyset}&=1,&F_{\mid \{1\}}&=\tdun{$\alpha$},&F_{\mid \{2\}}&=\tdun{$\beta$},&F_{\mid \{3\}}&=\tdun{$\gamma$},\\
		F_{\mid \{4\}}&=\tdun{$\delta$},&
		F_{\mid \{1,2\}}&=\tddeux{$\beta$}{$\alpha$},&
		F_{\mid \{1,3\}}&=\tdun{$\alpha$}\tdun{$\gamma$},&
		F_{\mid \{1,4\}}&=\tddeux{$\delta$}{$\alpha$},\\
		F_{\mid \{2,3\}}&=\tdun{$\beta$}\tdun{$\gamma$},&
		F_{\mid \{2,4\}}&=\tddeux{$\delta$}{$\beta$},&
		F_{\mid \{3,4\}}&=\tddeux{$\delta$}{$\gamma$},&
		F_{\mid \{1,2,3\}}&=\tddeux{$\beta$}{$\alpha$}\tdun{$\gamma$},\\
		F_{\mid \{1,2,4\}}&=\tdtroisdeux{$\delta$}{$\beta$}{$\alpha$},&
		F_{\mid \{1,3,4\}}&=\tdtroisun{$\delta$}{$\gamma$}{$\alpha$},&
		F_{\mid \{2,3,4\}}&=\tdtroisun{$\delta$}{$\gamma$}{$\beta$},&
		F_{\mid \{1,2,3,4\}}&=\tdquatredeux{$\alpha$}{$\beta$}{$\gamma$}{$\delta$}.
\end{align*}\end{exam}

\begin{remark}\label{rk:cut}
	Let $F=B_{\omega}^{+}(\lbar F)$ for some $\omega\in \Omega$ and $\lbar{F}\in \rfs$ and $I,H\subseteq V(\lbar F)$. Then
	\begin{align*}
		B_{\omega}^{+}({\lbar F}_{|I})&= B_{\omega}^{+}(\lbar F)_{| (I\sqcup \{\bullet_\omega\})}
	\end{align*}
	illustrated graphically as below:
	\begin{align*}
		\lbar {F}=\begin{tikzpicture}[baseline,scale=0.8]
			\draw (0,0) circle (0.5);
			\node at (0,0) {$\overline F$};
		\end{tikzpicture},\ \ \ \
		{\lbar F}_{|I}=\begin{tikzpicture}[baseline,scale=0.8]
			\draw (3,-0.5) arc (270:90:0.5)--+(0,-1);
			\node at (2.75,0) {$I$};
		\end{tikzpicture},\ \ \ \
		B_{\omega}^{+}({\lbar F}_{|I})=\begin{tikzpicture}[baseline, scale=0.8]
			\draw (6.5,-0.5) arc (270:90:0.5)--+(0,-2);
			\node at (6.25,0) {$I$};
			\fill (6.5,-1.5) circle (2pt) node[right]{$\omega$};
		\end{tikzpicture},\ \
		B_{\omega}^{+}(\lbar F)=\begin{tikzpicture}[baseline,scale=0.8]
			\draw (10,0) circle (0.5);
			\draw (10,0.5)--+(0,-2);
			\node at (9.75,0) {$I$};
			\fill (9.75,-1.5) circle (2pt) node[right]{$\omega$};
		\end{tikzpicture},\ \
		B_{\omega}^{+}(\lbar F)_{| (I\sqcup \{\bullet_\omega\})}=\begin{tikzpicture}[baseline,scale=0.8]
			\draw (15,-0.5) arc (270:90:0.5)--+(0,-2);
			\node at (14.75,0) {$I$};
			\fill (15,-1.5) circle (2pt) node[right]{$\omega$};
		\end{tikzpicture}.
	\end{align*}
\end{remark}

%\begin{lemma}
%Let $F=B_{\omega}^{+}(\lbar F)$ for some $\omega\in \Omega$ and $\lbar{F}\in \rfs$, and let $H \subseteq V(\lbar F)$. Then
%$H\sfid \lbar F$ if and only if $H\sfid F$.
%\mlabel{lem:bfid}
%\end{lemma}

\begin{proof}
	It follows from Lemma~\mref{lem:comid1} and the fact that the root $\bullet_\omega$ is minimum in $V(F)$ with respect to $\lhl$.
\end{proof}

\begin{theorem}
	Let $F\in \hrts$. We order its vertices according to $\lhl$:
	\[u_{n_F}\lhl \cdots \lhl u_1.\]
	For any $k\in \{0,\ldots,n_F\}$, we put
	$I_k=\{u_1,\ldots,u_k\}$, and $J_k=\{u_{k+1},\ldots,n_F\}$.
	Note that the $I_k$'s are the forest biideals of Lemma \mref{lem:comid1}, and the $J_k$'s are their complements.
	Then
	\begin{align}
		\col(F)&=-\lambda \sum_{k=0}^{n_F} F_{\mid I_k}\otimes F_{\mid J_k}+\mu \sum_{k=1}^{n_F} F_{\mid I_{k-1}}\otimes F_{\mid J_k}.
		\mlabel{eq:comb}
	\end{align}
	\mlabel{thm:comb}
\end{theorem}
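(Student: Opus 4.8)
The plan is to establish the identity~\eqref{eq:comb} by the same nested induction on $\dep(F)$ and then on $\bre(F)$ that governs the definition of $\col$ and the proofs of Lemma~\mref{lem:rt11}, Lemma~\mref{lem:colff} and Theorem~\mref{thm:rt1}. Denote by $D(F)$ the right-hand side of~\eqref{eq:comb}; the goal is to show $D(F)=\col(F)$ for every basis forest $F\in\rfs$. For the base case $\dep(F)=0$ we have $F=\bullet_{x_1}\cdots\bullet_{x_m}$, whose vertices carry no $\leq_h$-relations, so that $\lhl$ orders them from left to right; consequently $F_{\mid I_k}=\bullet_{x_1}\cdots\bullet_{x_k}$ and $F_{\mid J_k}=\bullet_{x_{k+1}}\cdots\bullet_{x_m}$, and substituting these into $D(F)$ recovers exactly the expression computed in Lemma~\mref{lem:rt11}, whence $D(F)=\col(F)$.

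For the inductive step I would first treat $\bre(F)=1$, writing $F=B^+_\omega(\lbar{F})$ with $\dep(\lbar{F})<\dep(F)$. The crucial combinatorial input is the comparison of biideals: by Lemma~\mref{lem:bfid1}, together with the fact that the root $\bullet_\omega$ is the $\lhl$-minimum of $V(F)$, the vertices of $F$ are those of $\lbar{F}$ with $\bullet_\omega$ adjoined at the bottom of the total order. Hence for $k<n_F$ the biideal $I_k$ of $F$ is a biideal of $\lbar{F}$ avoiding the root, so $F_{\mid I_k}=\lbar{F}_{\mid I_k}$, whereas its complement $J_k$ contains $\bullet_\omega$ and $F_{\mid J_k}=B^+_\omega(\lbar{F}_{\mid J_k})$ by Remark~\mref{rk:cut}; only the summands $k=n_F$ see the full root, giving $F_{\mid I_{n_F}}=B^+_\omega(\lbar{F})$ and $F_{\mid J_{n_F}}=\etree$. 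Isolating these $k=n_F$ summands in both sums of $D(F)$ and recombining the remainder yields
$$D(F)=-\lambda B^+_\omega(\lbar{F})\otimes\etree+\mu\,\lbar{F}\otimes\etree+(\id\otimes B^+_\omega)D(\lbar{F}),$$
which equals $\col(F)$ once we invoke $D(\lbar{F})=\col(\lbar{F})$ (induction on depth) and Eq.~\eqref{eq:dbp}.

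For $\bre(F)\geq2$ I would write $F=F_1F_2$ with $0<\bre(F_1),\bre(F_2)<\bre(F)$ and use that in the $\lhl$-order on $V(F)$ every vertex of $F_1$ is larger than every vertex of $F_2$. Setting $p=n_{F_1}$, $q=n_{F_2}$ and $n_F=p+q$, each biideal $I_k$ of $F$ is either a biideal of $F_1$ (for $k\le p$) or $V(F_1)$ together with a biideal of $F_2$ (for $k>p$), and correspondingly $F_{\mid I_k}$ and $F_{\mid J_k}$ split as concatenations of restrictions of $F_1$ and of $F_2$. Feeding these descriptions into $D(F)$ and regrouping the two sums according to whether $k\le p$ or $k>p$ --- a boundary-index correction in the $F_2$-block producing exactly the extra term $\lambda(F_1\otimes F_2)$ --- gives the combinatorial identity
$$D(F_1F_2)=F_1\cdot D(F_2)+D(F_1)\cdot F_2+\lambda(F_1\otimes F_2).$$
Since $D(F_1)=\col(F_1)$ and $D(F_2)=\col(F_2)$ by the induction hypotheses, the right-hand side is $\col(F_1F_2)$ by Lemma~\mref{lem:colff}, closing the induction.

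The main obstacle is the bookkeeping concentrated in these last two steps: correctly identifying which biideals contain the adjoined root $\bullet_\omega$ (respectively, how the $\lhl$-order interleaves $V(F_1)$ and $V(F_2)$), and verifying that the restrictions $F_{\mid I_k}$, $F_{\mid J_k}$ really factor through $B^+_\omega$ and concatenation as asserted. I expect the cleanest route is to record these two structural facts as short preliminary observations --- the grafting case being essentially Remark~\mref{rk:cut} and the concatenation case its evident analogue --- after which both induction steps reduce to routine reindexing, with care needed only at the extreme summands $k=0$ and $k=n_F$ where the conventions $F_{\mid\emptyset}=\etree$ and $F_{\mid V(F)}=F$ intervene.
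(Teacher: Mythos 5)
Your proposal is correct and follows essentially the same route as the paper's proof: the same case split (words of $\bullet_x$'s via Lemma~\ref{lem:rt11}, the grafting case via Eq.~(\ref{eq:dbp}) together with Lemma~\ref{lem:bfid1} and Remark~\ref{rk:cut}, the concatenation case via Lemma~\ref{lem:colff} plus reindexing of the biideal sums, with the double-counted $k=n_{F_1}$ term absorbing $\lambda F_1\otimes F_2$). The only cosmetic differences are that the paper inducts on the vertex count $n_F$ rather than on depth-then-breadth, and that in the product case it expands $\col(F_1F_2)$ and reindexes into the claimed formula whereas you run the identical computation in the opposite direction.
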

\begin{proof}
	We prove this equality for basis elements $F\in \rfs$ by induction on $n_F= |V(F)|$.
	If $n=0$, then $F=\etree$ and
	\[\col(F)=-\lambda 1\otimes 1=-\lambda F_{\mid I_0}\otimes F_{\mid J_0}.\]
	Assume that the result holds for any $G\in \rfs$ such that $n_G<n_F$.
	If $\bre(F)>1$, we can write $F=F_1F_2$, with $n_{F_1}, n_{F_2}<n_F$. We obtain
	\begin{align*}
		\col(F)=&\ \col(F_1)F_2+F_1\col(F_2)+\lambda F_1\otimes F_2\\
		=&\ -\lambda \sum_{k=0}^{n_{F_1}}(F_1)_{\mid I_k}\otimes (F_1)_{\mid J_k}F_2
		-\lambda \sum_{k=0}^{n_{F_2}}F_1(F_2)_{\mid I_k}\otimes (F_2)_{\mid J_k}
		+\lambda F_1\otimes F_2\\
		&\ +\mu \sum_{k=1}^{n_{F_1}}(F_1)_{\mid I_{k-1}}\otimes (F_1)_{\mid J_k}F_2
		+\mu \sum_{k=1}^{n_{F_2}}F_1(F_2)_{\mid I_{k-1}}\otimes (F_2)_{\mid J_k}\\
		=&\ -\lambda \sum_{k=0}^{n_{F_1}}F_{\mid I_k}\otimes F_{\mid J_k}
		-\lambda \sum_{k=0}^{n_{F_2}}F_{\mid I_{k+n_{F_1}}}\otimes F_{\mid J_{k+n_{F_1}}}
		+\lambda F_1\otimes F_2\\
		&\ +\mu \sum_{k=1}^{n_{F_1}}F_{\mid I_{k-1}}\otimes F_{\mid J_k}
		+\mu \sum_{k=1}^{n_{F_2}}F_{\mid I_{k+n_{F_1}-1}}\otimes F_{\mid J_{k+n_{F_1}}}\\
		=&\ -\lambda \sum_{k=0}^{n_{F_1}}F_{\mid I_k}\otimes F_{\mid J_k}
		-\lambda \sum_{k=n_{F_1}}^{n_{F_1}+n_{F_2}}F_{\mid I_k}\otimes F_{\mid J_k}
		+\lambda F_1\otimes F_2\\
		&\ +\mu \sum_{k=1}^{n_{F_1}}F_{\mid I_{k-1}}\otimes F_{\mid J_k}
		+\mu \sum_{k=n_{F_1}}^{n_{F_1}+n_{F_2}}F_{\mid I_{k-1}}\otimes F_{\mid J_k}\\
		=&\ -\lambda \sum_{k=0}^{n_{F_1}+n_{F_2}}F_{\mid I_k}\otimes F_{\mid J_k}+\mu \sum_{k=1}^{n_{F_1}+n_{F_2}}F_{\mid I_{k-1}}\otimes F_{\mid J_k}\\
		=&\ -\lambda \sum_{k=0}^{n_F}F_{\mid I_k}\otimes F_{\mid J_k}+\mu \sum_{k=1}^{n_F}F_{\mid I_{k-1}}\otimes F_{\mid J_k}.
	\end{align*}
	
	If $\bre(F)=1$, we have two cases to consider.
	
	\noindent{\bf Case 1.} $F=\bullet_{x}$ for some $x\in X$.
	Then
	\begin{align*}
		\col(F)&=-\lambda(F\otimes 1+1\otimes F)+\mu 1\otimes 1\\
		&=-\lambda (F_{\mid I_1}\otimes F_{\mid J_1}+F_{\mid I_0}\otimes F_{\mid J_0})+\mu F_{\mid I_0}\otimes F_{\mid J_1}.
	\end{align*}		
	
	\newcommand{\barF}{\lbar{F}}
	
	\noindent{\bf Case 2.} $F = B_{\omega}^{+}(\lbar{F})$ for some $\lbar{F} \in \rfs$ and $\omega\in \Omega$. Applying the induction hypothesis on $\barF$, we obtain
	\begin{align*}
		\col(F)&=-\lambda F\otimes 1+\mu \barF\otimes 1+(\id \otimes B_\omega^+)\left(-\lambda
		\sum_{k=0}^{n_{\barF}} \barF_{\mid I_k} \otimes \barF_{\mid J_k}
		+\mu \sum_{k=1}^{n_{\barF}} \barF_{\mid I_{k-1}} \otimes \barF_{\mid J_k}\right)\\
		&=-\lambda F_{\mid I_{n_F}}\otimes F_{\mid J_{n_F}}+\mu F_{\mid I_{n_F}-1}\otimes F_{\mid J_{n_F}}
		-\lambda \sum_{k=0}^{n_{\barF}} \barF_{\mid I_k} \otimes B_\omega^+(\barF_{\mid J_k})
		+\mu \sum_{k=1}^{n_{\barF}} \barF_{\mid I_{k-1}} \otimes B_\omega^+( \barF_{\mid J_k})\\
		&=-\lambda F_{\mid I_{n_F}}\otimes F_{\mid J_{n_F}}+\mu F_{\mid I_{n_F}-1}\otimes F_{\mid J_{n_F}}
		-\lambda \sum_{k=0}^{n_F-1} F_{\mid I_k} \otimes F_{\mid J_k}
		+\mu \sum_{k=1}^{n_F-1} F_{\mid I_{k-1}} \otimes F_{\mid J_k}\\
		&=-\lambda \sum_{k=0}^{n_F} F_{\mid I_k} \otimes F_{\mid J_k}
		+\mu \sum_{k=1}^{n_F} F_{\mid I_{k-1}} \otimes F_{\mid J_k}.
	\end{align*}
	Hence Eq.~(\mref{eq:comb}) holds. This completes the induction on $n_F$.
\end{proof}
\begin{exam}
	\begin{enumerate}
		\item \label{exam:a}
		Consider $F=\tdun{$\beta$}\tddeux{$\alpha$}{$x$}$. Then $\bullet_\alpha \lhl \bullet_x\lhl \bullet_\beta$.
		By  Lemma~\mref{lem:comid1}, $F$ has four forests biideals $\emptyset$, $\{\bullet_\beta\}$,$\{\bullet_\beta, \bullet_x\}$,$\{\bullet_\beta, \bullet_x,\bullet_\alpha\}$. So by Theorem~\mref{thm:comb},
		\begin{align*}
			\col(\tdun{$\beta$}\tddeux{$\alpha$}{$x$})=&\mu(F_{|\emptyset}\ot F_{| \{\bullet_x, \bullet_\alpha\}}
			+F_{|\{\bullet_\beta\}}\ot F_{| \{\bullet_\alpha\}}
			+F_{|\{\bullet_\beta, \bullet_x\}}\ot F_{|\emptyset})\\
			&-\lambda(F_{|\emptyset}\ot F_{|\{\bullet_\beta, \bullet_x, \bullet_\alpha\}}+F_{|\{\bullet_\beta\}}\ot F_{|\{\bullet_x, \bullet_\alpha\}}+F_{|\{\bullet_\beta, \bullet_x\}}\ot F_{|\{ \bullet_\alpha\}}+F_{|\{\bullet_\beta, \bullet_x, \bullet_\alpha\}}\ot F_{|\emptyset})\\
			=&\mu(\etree\ot \tddeux{$\alpha$}{$x$}+\tdun{$\beta$}\ot \tdun{$\alpha$}+ \tdun{$\beta$}\tdun{$x$}\ot \etree)\\
			&-\lambda(\etree \ot \tdun{$\beta$}\tddeux{$\alpha$}{$x$}+\tdun{$\beta$}\ot \tddeux{$\alpha$}{$x$}+\tdun{$\beta$}\tdun{$x$}\ot \tdun{$\alpha$}+ \tdun{$\beta$}\tddeux{$\alpha$}{$x$}\ot \etree).
		\end{align*}
		
		\item \label{exam:b}
		Let $F=\tdquatretrois{$\omega$}{$\beta$}{$x$}{$\alpha$}$. Then $\bullet_\omega \lhl \bullet_\beta\lhl \bullet_{x}\lhl \bullet_{\alpha}$ and so $F$ has five forests biideals $\emptyset$, $\{\bullet_\alpha\}$, $\{\bullet_\alpha, \bullet_x\}$,$\{\bullet_\alpha, \bullet_x, \bullet_\beta\}$,$\{\bullet_\alpha, \bullet_x, \bullet_\beta,\bullet_\omega\}$ by Lemma~\mref{lem:comid1}. It follows from Theorem~\mref{thm:comb} that
		\begin{align*}
			\col( \tdquatretrois{$\omega$}{$\beta$}{$x$}{$\alpha$})
			=&\mu (F_{|\emptyset}\ot F_{|\{\bullet_x,\bullet_\beta,\bullet_\omega\}}+F_{|\{\bullet_\alpha\}}\ot F_{| \{\bullet_\beta, \bullet_\omega\}}+F_{|\{\bullet_\alpha, \bullet_x\}}\ot F_{|\{\bullet_\omega\}}
			+F_{|\{\bullet_\alpha, \bullet_x,\bullet_\beta \}}\ot F_{|\emptyset})\\
			&-\lambda(F_{|\emptyset}\ot F_{|\{\bullet_x,\bullet_\beta,\bullet_\alpha,\bullet_\omega\}}+F_{|\{\bullet_\alpha\}}\ot F_{| \{\bullet_x,\bullet_\beta, \bullet_\omega\}}+F_{|\{\bullet_\alpha, \bullet_x\}}\ot F_{|\{\bullet_\omega,\bullet_\beta\}}\\
			&+F_{|\{\bullet_\alpha, \bullet_x,\bullet_\beta \}}\ot F_{|\{\bullet_\omega\}}+F_{|\{\bullet_\alpha, \bullet_x,\bullet_\beta,\bullet_\omega\} \}}\ot F_{|\emptyset})\\
			=&\mu(\etree\ot \tdtroisdeux{$\omega$}{$\beta$}{$x$}+\tdun{$\alpha$}\ot \tddeux{$\omega$}{$\beta$}+\tdun{$\alpha$}\tdun{$x$}\ot \tdun{$\omega$}+ \tdun{$\alpha$}\tddeux{$\beta$}{$x$}\ot \etree)\\
			&-\lambda(\tdquatretrois{$\omega$}{$\beta$}{$x$}{$\alpha$}\ot \etree +\tdun{$\alpha$}\tddeux{$\beta$}{$x$}\ot \tdun{$\omega$}+\tdun{$\alpha$}\tdun{$x$}\ot\tddeux{$\omega$}{$\beta$}+\tdun{$\alpha$}\ot\tdtroisdeux{$\omega$}{$\beta$}{$x$}+\etree \otimes \tdquatretrois{$\omega$}{$\beta$}{$x$}{$\alpha$}).
		\end{align*}
	\end{enumerate}
	Observe that the results in~(\mref{exam:a}) and (\mref{exam:b}) are consistent with the ones in Example~\mref{exam:cop}.
\end{exam}

\begin{coro}\mlabel{cor:counit}
	Let us assume that $\lambda$ is an invertible element of $\bfk$. Then $\col$ has a counit, defined by
	\begin{align}
		\mlabel{eqcounit}
		\varepsilon_{\lambda,\mu}(F)&=-\frac{\mu^{n_F}}{\lambda^{n_F+1}}\mbox{ for }F\in \rfs.
	\end{align}
	It is an algebra morphism if, and only if, $\lambda=-1$.
\end{coro}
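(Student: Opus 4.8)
The plan is to verify directly that $\varepsilon_{\lambda,\mu}$ satisfies the two counit axioms
\[
(\varepsilon_{\lambda,\mu}\ot\id)\col=\id=(\id\ot\varepsilon_{\lambda,\mu})\col,
\]
and the cleanest route is to substitute the closed-form expansion of Theorem~\mref{thm:comb} into these identities rather than to argue by induction on depth and breadth. The key bookkeeping observation is purely combinatorial: for $F\in\rfs$ with vertices $u_{n_F}\lhl\cdots\lhl u_1$, the subforest $F_{\mid I_k}$ has exactly $k$ vertices and $F_{\mid J_k}$ has exactly $n_F-k$ vertices. Consequently $\varepsilon_{\lambda,\mu}(F_{\mid I_k})=-\mu^{k}/\lambda^{k+1}$ and $\varepsilon_{\lambda,\mu}(F_{\mid J_k})=-\mu^{n_F-k}/\lambda^{n_F-k+1}$, so applying $\varepsilon_{\lambda,\mu}$ to either tensor factor turns $\col(F)$ into an explicit sum of the forests $F_{\mid J_k}$ (resp. $F_{\mid I_k}$) weighted by powers of $\mu/\lambda$. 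Throughout we use that $\lambda$ is invertible, which is exactly what makes $\varepsilon_{\lambda,\mu}$ well defined.

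For the left axiom, feeding the above values into $(\varepsilon_{\lambda,\mu}\ot\id)\col(F)$ gives, for the coefficient of $F_{\mid J_k}$, a contribution $(\mu/\lambda)^{k}$ from the $-\lambda$-sum and $-(\mu/\lambda)^{k}$ from the $\mu$-sum; these cancel for every $k\geq 1$, and only the $k=0$ term of the first sum survives, leaving $F_{\mid J_0}=F_{\mid V(F)}=F$. The right axiom is symmetric: in $(\id\ot\varepsilon_{\lambda,\mu})\col(F)$ the $-\lambda$-sum contributes $(\mu/\lambda)^{n_F-k}$ to the coefficient of $F_{\mid I_k}$, while after the reindexing $k\mapsto k-1$ the $\mu$-sum contributes $-(\mu/\lambda)^{n_F-k}$ to $F_{\mid I_k}$ for $k=0,\dots,n_F-1$; everything cancels except the $k=n_F$ term of the first sum, which equals $F_{\mid I_{n_F}}=F$. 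Thus both telescoping identities yield $\id$.

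For the second assertion, the point is that $\varepsilon_{\lambda,\mu}(F)$ depends on $F$ only through $n_F$ and that $n_{FG}=n_F+n_G$ under concatenation; hence $\varepsilon_{\lambda,\mu}$ is a (unital) algebra morphism precisely when the function $f\colon(\NN,+)\to(\bfk,\cdot)$ defined by $f(n)=-\mu^{n}/\lambda^{n+1}$ is a monoid homomorphism. Applying multiplicativity to the idempotent unit $\etree$ already forces $\varepsilon_{\lambda,\mu}(\etree)=\varepsilon_{\lambda,\mu}(\etree)^2$, i.e. $-1/\lambda=1/\lambda^{2}$, whence $\lambda=-1$; this gives the ``only if'' direction robustly, independently of whether one also demands unitality. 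Conversely, when $\lambda=-1$ one simplifies $\varepsilon_{-1,\mu}(F)=(-\mu)^{n_F}$, which is visibly multiplicative and sends $\etree$ to $1$, completing the equivalence.

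I expect every computation here to be routine; there is no conceptual obstacle once Theorem~\mref{thm:comb} is invoked. The only real care lies in the index-shifting and in checking that the two geometric sums telescope exactly term by term, so the main (mild) difficulty is the bookkeeping of exponents. A direct inductive verification on depth would reproduce the same cancellations far less transparently, which is why the closed form is the preferred tool.
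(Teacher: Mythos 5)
Your proof is correct and follows essentially the same route as the paper: substituting the closed form of Theorem~\ref{thm:comb}, applying $\varepsilon_{\lambda,\mu}$ to one tensor factor via $\varepsilon_{\lambda,\mu}(F_{\mid I_k})=-\mu^k/\lambda^{k+1}$, and observing the telescoping cancellation that leaves only $F_{\mid J_0}=F$ (resp.\ $F_{\mid I_{n_F}}=F$), with the paper merely remarking that the right-counit check is ``very similar'' where you spell out the reindexing. Your argument for the multiplicativity criterion is likewise identical: $\varepsilon_{\lambda,\mu}(\etree)=\varepsilon_{\lambda,\mu}(\etree)^2$ forces $\lambda=-1$, and conversely $\varepsilon_{-1,\mu}(F)=(-\mu)^{n_F}$ is multiplicative since $n_{FG}=n_F+n_G$.
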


\begin{proof}
	Let us prove that the map defined by (\mref{eqcounit}) is indeed a counit. Let $F\in \rfs$. From Theorem \ref{thm:comb},
	\begin{align*}
		(\varepsilon_{\lambda,\mu}\otimes \id)\circ \col(F)&=\lambda \sum_{k=0}^{n_F}
		\frac{\mu^k}{\lambda^{k+1}}F_{\mid J_k}-\mu \sum_{k=1}^{n_F} \frac{\mu^{k-1}}{\lambda^k}F_{\mid J_k}\\
		&=\sum_{k=0}^{n_F}
		\frac{\mu^k}{\lambda^k}F_{\mid J_k}-\sum_{k=1}^{n_F} \frac{\mu^k}{\lambda^k}F_{\mid J_k}\\
		&=\frac{\mu^0}{\lambda^0}F_{\mid J_0}\\
		&=F,
	\end{align*}
	as $J_0=V(F)$. So $\varepsilon_{\lambda,\mu}$ is a left counit for $\col$. One can show it is a right counit
	in a very similar way.
	
	If $\lambda=-1$, then $\varepsilon_{-1,\mu}(F)=(-\mu)^{n_F}$,
	which obviously defines an algebra morphism.
	If $\varepsilon_{\lambda,\mu}$ is an algebra morphism, then
	\begin{align*}
		-\frac{1}{\lambda}=\varepsilon_{\lambda,\mu}(1)
		=\varepsilon_{\lambda,\mu}(1)\varepsilon_{\lambda,\mu}(1)=
		\frac{1}{\lambda^2}.
	\end{align*}
	So $\lambda=-1$.
\end{proof}

\subsection{Dual products}

We assume in this paragraph that $\Omega$ and $X$ are finite. We identify $\hrts$ with its graded dual, through the bilinear symmetric form given by
\begin{align*}
	&\forall F,G\in \rfs,&\langle F,G\rangle&=\delta_{F,G}.
\end{align*}
If $F\in \rfs$, $\Delta_{\lambda,\mu}(F)$ is a span of tensors of two forests $F'\otimes F''$, with $n_{F'}+n_{F''}\in \{n_F,n_F-1\}$. Therefore, there exists a unique product $\star_{\lambda,\mu}$ on $\hrts$, such that if $x,y,z\in \hrts$,
\[\langle\Delta_{\lambda,\mu}(x),y\otimes z\rangle=\langle x,y\star_{\lambda,\mu}z\rangle.\]
The coassociativity of $\Delta_{\lambda,\mu}$ implies the associativity of $\star_{\lambda,\mu}$.
Moreover, if $F,G\in \rfs$, then $F\star_{\lambda,\mu} G$
is a span of forests with $n_F+n_G$ or $n_F+n_G+1$ vertices.
We put $\star=\star_{-1,0}$ and we first describe $\star$. By definition of $\Delta_{-1,0}$,
if $F,G\in \rfs$, then $F\star G$ is the sum of all forests $H\in \rfs$, such that
\[\begin{cases}
	n_H=n_F+n_G,\\
	H_{\mid I_{n_F}}=F,\\
	H_{\mid J_{n_F}}=G.
\end{cases}\]
Let us describe this set of forests.

\begin{prop}
	Let $F=T_1\ldots T_m\in \rfs$ be a forest formed by $m$ trees and $G\in \rfs$.
	We denote by $v_1,\ldots,v_n$ the vertices of $G$ on the path from the root at most on the left of $G$
	the leaf at most on the left of $G$, except the leaf itself if it is decorated by an element of $X$.
	Let $\sigma:\{1,\ldots,m\}\longrightarrow\{0,\ldots,n\}$ be an increasing map: $\sigma(1)\leq \ldots \leq \sigma(m)$.
	We denote by $F\curvearrowright_\sigma G$ the forest obtained by grafting $T_m,\ldots,T_1$ (in this order) as this:
	\begin{itemize}
		\item If $\sigma(i)=j\geq 1$, $T_i$ is grafted on $v_j$ the most possible on the left.
		\item If $\sigma(i)=0$, $T_i$ is put on the left of the forest obtained so far.
	\end{itemize}
	Then
	\[F\star G=\sum_{\substack {\sigma:\{1,\ldots,m\}\longrightarrow \{0,\ldots,n\},\\\mbox{\scriptsize increasing}}} F\curvearrowright_\sigma G.\]
\end{prop}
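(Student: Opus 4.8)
The plan is to start from the description of $\star$ recorded just before the statement: applying Theorem~\ref{thm:comb} with $(\lambda,\mu)=(-1,0)$ and the pairing $\langle F,G\rangle=\delta_{F,G}$ shows that $F\star G$ is the sum of all forests $H\in\rfs$ satisfying
\[
n_H=n_F+n_G,\qquad H_{\mid I_{n_F}}=F,\qquad H_{\mid J_{n_F}}=G.
\]
Thus the entire content of the proposition is to identify this set of forests with the family $\{F\curvearrowright_\sigma G\}$ indexed by the increasing maps $\sigma\colon\{1,\dots,m\}\to\{0,\dots,n\}$. I would prove this by exhibiting an explicit bijection between such maps $\sigma$ and the valid $H$'s, and then noting that each is counted with coefficient $1$.

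First I would record the geometric reformulation. Since, by Lemma~\ref{lem:comid1}, $I_{n_F}$ is the set of the $n_F$ largest vertices of $H$ for the total order $\lhl$, the two conditions $H_{\mid I_{n_F}}=F$ and $H_{\mid J_{n_F}}=G$ are equivalent to requiring that every vertex of the $F$-part of $H$ be $\lhl$-larger than every vertex of the $G$-part, that the induced subforest on the $F$-part equal $F$, and that the induced subforest on the $G$-part equal $G$. Because the $F$-part is then an up-set for $\lhl$, it is closed under passing to descendants; hence it is a disjoint union of $m$ rooted subtrees whose roots are either children of $G$-vertices or roots of $H$, and these subtrees must be $T_1,\dots,T_m$ read from left to right.

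The heart of the argument, and the step I expect to be the main obstacle, is to pin down exactly where each $T_i$ may be attached. I would prove the key claim: a tree $T$ grafted as a child of a vertex $p$ of $G$ (or placed as a standalone tree) has all its vertices $\lhl$-larger than all vertices of $G$ \emph{if and only if} $p$ lies on the leftmost path $v_1,\dots,v_n$ of $G$ and $T$ is inserted to the left of the subtree of $G$ hanging from $p$ (in the standalone case, $T$ is the leftmost tree, which is the case $\sigma(i)=0$). The ``only if'' direction is the delicate one and is established directly from the definitions of $\leq_h$ and $\leq_l$: any $G$-vertex strictly to the left of $p$ is not $\leq_h$-comparable to $T$ and sits to the left of $T$, hence is $\lhl$-larger than $T$, which forces $p$ to have no vertex to its left, i.e. $p\in\{v_1,\dots,v_n\}$; and any $G$-vertex lying above $p$ (a descendant of $p$ in $G$) forces $T$ to be grafted to the left of the $G$-subtree at $p$. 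The exclusion of the leftmost leaf when it carries an $X$-decoration is precisely the constraint that internal vertices must be $\Omega$-decorated, so that one cannot graft onto such a leaf. Together with the easy converse, this shows that the valid $H$ are exactly the forests obtained by grafting $T_1,\dots,T_m$ onto the leftmost path of $G$ as described.

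Finally I would check that this family is parametrized bijectively by the increasing maps $\sigma$. Recording $\sigma(i)$ as the index of the leftmost-path vertex carrying $T_i$ (with $\sigma(i)=0$ when $T_i$ is a standalone leftmost tree), the planar order $T_1\cdots T_m$ of the $F$-part forces $\sigma(1)\le\cdots\le\sigma(m)$: grafting lower on the leftmost path, or further to the left, produces a $\lhl$-larger and hence more-to-the-left tree, and among trees attached at the same vertex the one with smaller index sits further left. This is exactly why the recipe grafts $T_m,\dots,T_1$ in this order and always ``the most possible on the left'', so that distinct increasing maps $\sigma$ yield distinct forests and every valid $H$ is obtained exactly once as some $F\curvearrowright_\sigma G$. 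Summing over all increasing $\sigma$ then yields the asserted formula for $F\star G$.
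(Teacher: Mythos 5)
Your proposal is correct and follows essentially the same route as the paper's proof: both start from the dual characterization of $F\star G$ as the sum of all $H\in\rfs$ with $n_H=n_F+n_G$, $H_{\mid I_{n_F}}=F$ and $H_{\mid J_{n_F}}=G$, then verify that every $F\curvearrowright_\sigma G$ satisfies these conditions and, conversely, recover $\sigma$ from a valid $H$ by recording where the root of each $T_i$ is attached, with the left-to-right order of $T_1,\ldots,T_m$ forcing $\sigma$ to be increasing. Your key claim pinning the attachment vertices to the leftmost path $v_1,\ldots,v_n$ (and the grafts to the leftmost position) is precisely the step the paper asserts tersely (``necessarily $v$ is one of the $v_j$'s''), so your version simply supplies details, including the injectivity of $\sigma\mapsto F\curvearrowright_\sigma G$, that the paper leaves implicit.
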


\begin{proof}
	We put $k=|V(F)|$ and $l=|V(G)|$.
	Let $\sigma:\{1,\ldots,m\}\longrightarrow \{0,\ldots,n\}$, increasing. We put $H=F\curvearrowright_\sigma G$. As we do not graft on a leaf decorated by an element of $X$ in the process, this belongs to $\rfs$.
	By construction, $V(H)=V(F)\sqcup V(G)$ and:
	\begin{enumerate}
		\item $H_{\mid  V(G)}=G$.
		\item For any $i\in \{1,\ldots,m\}$, $H_{\mid  V(T_i)}=T_i$.
		\item If $x\in T_i$ and $y\in T_j$, with $i<j$, then $x\geq_l y$.
		\item If $x\in V(F)$ and $y\in V(G)$, then $x\geq_l y$ or $x\geq_h y$.
	\end{enumerate}
	As a consequence of the points 2 and 3, $H_{\mid  V(F)}=F$. The last point then implies that
	$H_{\mid I_{n_F}}=F$ and $H_{\mid J_{n_F}}=G$.
	
	Let $H\in \rfs$, with $n_F+n_G$ vertices, such that $H_{\mid I_{n_F}}=F$ and $H_{\mid J_{n_F}}=G$.
	$H$ contains a copy of $T_i$ for any $i$ and a copy of $G$.
	If the root of $T_i$, we put $\sigma(i)=0$. Otherwise, this root is grafted on a vertex $v$ of $G$.
	As $T_i$ is a tree of $H_{\mid I_k}$, necessarily $v$ is one of the $v_j$'s: we denote by $\sigma(i)$ the unique index
	such that the root of $T_i$ is grafted on $v_{\sigma(j)}$. Then, as $T_j \leq_l T_i$ if $i<j$, necessarily $\sigma$ is increasing
	and $H=F\curvearrowright_\sigma G$.
\end{proof}

\begin{remark}
	The product $F\star G$ contains $\displaystyle\binom{m+n}{n}$ terms, all different. Moreover, even if $\Omega$ or $X$ is not finite, this product $\star$ makes sense. It will not be the case of $\star_{\lambda,\mu}$, as it will be seen in Theorem \mref{theoproduitdual}.
\end{remark}

\begin{exam}
	Let $\alpha,\beta,\gamma,\delta \in \Omega$ and let $x\in X$.
	\begin{align*}
		\tdun{$\alpha$}\tdun{$\beta$}\star \tddeux{$\gamma$}{$\delta$}&=\tdun{$\alpha$}\tdun{$\beta$}\tddeux{$\gamma$}{$\delta$}
		+\tdun{$\alpha$}\tdtroisun{$\gamma$}{$\delta$}{$\beta$}+\tdun{$\alpha$}\tdtroisdeux{$\gamma$}{$\delta$}{$\beta$}+
		\tdquatreun{$\gamma$}{$\delta$}{$\beta$}{$\alpha$}+\tdquatretrois{$\gamma$}{$\delta$}{$\beta$}{$\alpha$}
		+\tdquatrequatre{$\gamma$}{$\delta$}{$\beta$}{$\alpha$},\\
		\tdun{$\alpha$}\tdun{$\beta$}\star \tddeux{$\gamma$}{$x$}&=\tdun{$\alpha$}\tdun{$\beta$}\tddeux{$\gamma$}{$x$}
		+\tdun{$\alpha$}\tdtroisun{$\gamma$}{$x$}{$\beta$}+
		\tdquatreun{$\gamma$}{$x$}{$\beta$}{$\alpha$}.
	\end{align*}	
	%{$\alpha$}{$\beta$}{$\gamma$}{$\delta$}
\end{exam}

\begin{theorem}\mlabel{theoproduitdual}
	For any $\lambda,\mu \in \bfk$, for any $x,y\in \hrts$,
	\[x\star_{\lambda,\mu}y=-\lambda x\star y+\mu\sum_{\omega \in X\sqcup \Omega} x\star \tdun{$\omega$} \star y.\]
\end{theorem}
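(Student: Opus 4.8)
The plan is to reduce everything to the combinatorial formula of Theorem~\mref{thm:comb} and then transport it through the pairing. First I would split the coproduct. Reading Eq.~(\mref{eq:comb}) with $(\lambda,\mu)=(-1,0)$ shows $\Delta_{-1,0}(F)=\sum_{k=0}^{n_F}F_{\mid I_k}\otimes F_{\mid J_k}$, so for general $\lambda,\mu$ we may write $\col=-\lambda\,\Delta_{-1,0}+\mu\,\Delta'$, where $\Delta'(F):=\sum_{k=1}^{n_F}F_{\mid I_{k-1}}\otimes F_{\mid J_k}$. Let $\diamond$ denote the product on $\hrts$ dual to $\Delta'$, i.e. $\langle\Delta'(x),y\otimes z\rangle=\langle x,y\diamond z\rangle$. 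Since the form $\langle\cdot,\cdot\rangle$ is nondegenerate and each dual product is uniquely determined, pairing $\langle\col(x),y\otimes z\rangle$ against the decomposition of $\col$ yields $x\star_{\lambda,\mu}y=-\lambda\,x\star y+\mu\,x\diamond y$. Thus the theorem is equivalent to the single identity $y\diamond z=\sum_{\omega\in X\sqcup\Omega}y\star\tdun{$\omega$}\star z$.

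To prove this identity I would set $P:=\sum_{\omega\in X\sqcup\Omega}\tdun{$\omega$}$, which is a well-defined element of $\hrts$ precisely because of the standing finiteness assumption on $\Omega$ and $X$ (this is exactly why $\star_{\lambda,\mu}$, unlike $\star$, requires finiteness). The right-hand side is then $y\star P\star z$, and by associativity of $\star$ together with coassociativity of $\Delta_{-1,0}$ (Theorem~\mref{thm:rt1}) the triple product is adjoint to the iterated coproduct: $\langle x,y\star w\star z\rangle=\langle\Delta_{-1,0}^{(2)}(x),y\otimes w\otimes z\rangle$, where $\Delta_{-1,0}^{(2)}=(\Delta_{-1,0}\otimes\id)\Delta_{-1,0}$. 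Iterating the biideal description and using that the restriction $F_{\mid I}$ carries the $\lhl$-order induced from $F$ (so that the biideals of $F_{\mid I_k}$ are exactly the initial segments $\{u_1,\ldots,u_i\}$ with $i\le k$, and restriction commutes with restriction), one computes
\[
\Delta_{-1,0}^{(2)}(F)=\sum_{0\le i\le j\le n_F} F_{\mid\{u_1,\ldots,u_i\}}\otimes F_{\mid\{u_{i+1},\ldots,u_j\}}\otimes F_{\mid\{u_{j+1},\ldots,u_{n_F}\}}.
\]

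Finally I would pair the middle tensor slot against $P$. Because $\langle w,P\rangle=1$ when $w$ is a single-vertex tree and $0$ otherwise, this selects exactly the terms with $j=i+1$, whose middle factor $F_{\mid\{u_{i+1}\}}=\tdun{$\omega$}$ is the one-vertex tree decorated by the label $\omega\in X\sqcup\Omega$ of $u_{i+1}$. Writing $k=i+1$, these surviving terms are precisely $\sum_{k=1}^{n_F}F_{\mid I_{k-1}}\otimes\tdun{$\omega$}\otimes F_{\mid J_k}$, so that for a basis forest $x=F$ we get $\langle x,y\star P\star z\rangle=\sum_{k=1}^{n_F}\langle F_{\mid I_{k-1}},y\rangle\langle F_{\mid J_k},z\rangle=\langle\Delta'(x),y\otimes z\rangle=\langle x,y\diamond z\rangle$. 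Extending by linearity and invoking nondegeneracy gives $y\star P\star z=y\diamond z$, which combined with the first paragraph yields the claimed formula. The main obstacle is the explicit evaluation of $\Delta_{-1,0}^{(2)}$, and in particular the verification that $F_{\mid I}$ inherits the restricted total order $\lhl$ so that the iterated coproduct becomes a sum over consecutive segmentations of the $\lhl$-ordered vertex set; once that structural fact is established the remaining steps are routine bookkeeping.
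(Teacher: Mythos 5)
Your proof is correct and takes essentially the same route as the paper's: the paper likewise evaluates $(\Delta_{-1,0}\otimes\id)\circ\Delta_{-1,0}(F)$ as the sum over consecutive segmentations $F_{\mid \{v_1,\ldots,v_k\}}\otimes F_{\mid \{v_{k+1},\ldots,v_l\}}\otimes F_{\mid \{v_{l+1},\ldots,v_{n_F}\}}$, notes that pairing the middle slot against $\sum_{\omega\in X\sqcup\Omega}\tdun{$\omega$}$ kills all terms except $l=k+1$, and then transports the resulting identity through the bilinear form using Theorem~\mref{thm:comb}. Your factorization through the auxiliary coproduct $\Delta'$ and its dual product $\diamond$ is merely a repackaging of the paper's single computation of $\langle x\otimes y,\col(F)\rangle$, and your flagged structural fact (restriction is compatible with the induced total order $\lhl$, so biideals of $F_{\mid I_k}$ are initial segments) is exactly what the paper uses implicitly.
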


\begin{proof}
	Firstly, for any $G\in \rfs$,
	\begin{align*}
		\sum_{\omega\in X\sqcup \Omega}\langle\tdun{$\omega$},G\rangle&=\begin{cases}
			1\mbox{ if $\exists ~\omega \in X\sqcup \Omega$, $G=\tdun{$\omega$}$},\\
			0\mbox{ otherwise.}
		\end{cases}\\
		&=\begin{cases}
			1\mbox{ if }n_G=1,\\
			0\mbox{ otherwise}.
		\end{cases}
	\end{align*}
	Let $x,y\in \hrts$, and $F\in \rfs$. The vertices of $F$ are totally ordered by $\lhl$:
	\[v_n\lhl \cdots \lhl v_1.\]
	Then we have
	\begin{align*}
		&\sum_{\omega \in X\sqcup \Omega} \langle x\otimes \tdun{$\omega$}\otimes y,(\Delta_{-1,0} \otimes \id)\circ \Delta_{-1,0}(F)\rangle\\
		&= \sum_{\omega \in X\sqcup \Omega}\sum_{0\leq k\leq l\leq n_F} \langle x\otimes \tdun{$\omega$}\otimes y,
		F_{\mid \{v_1,\ldots,v_k\}}\otimes F_{\mid \{v_{k+1},\ldots,v_l\}}\otimes F_{\mid \{v_{l+1},\ldots,v_{n_F}\}}\rangle\\
		&=\sum_{0\leq k <n_F} \langle x\otimes y,
		F_{\mid \{v_1,\ldots,v_k\}}\otimes F_{\mid \{v_{k+2},\ldots,v_{n_F}\}}\rangle+0\\
		&=\sum_{0\leq k <n_F} \langle x\otimes y,F_{\mid I_k}\otimes F_{\mid J_{k+1}}\rangle\\
		&=\sum_{1\leq k\leq n_F} \langle x\otimes y,F_{\mid I_{k-1}}\otimes F_{\mid J_k}\rangle.
	\end{align*}
	As a consequence, by Theorem \ref{thm:comb},
	\begin{align*}
		\langle x\otimes y,\col(F)\rangle&=-\lambda \langle x\otimes y,\Delta_{-1,0}(F)\rangle
		+\mu \sum_{1\leq k\leq n_F} \langle x\otimes y,F_{\mid I_{k-1}}\otimes F_{\mid J_k}\rangle\\
		&=-\lambda \langle x\otimes y,\Delta_{-1,0}(F)\rangle+\mu \sum_{\omega \in X\sqcup \Omega} \langle x\otimes \tdun{$\omega$}\otimes y,(\Delta_{-1,0} \otimes \id)\circ \Delta_{-1,0}(F)\rangle\\
		&=-\lambda \langle x\star y,F\rangle+\mu \sum_{\omega \in X\sqcup \Omega} \langle (x\star\tdun{$\omega$})\star y,F\rangle\\
		&=\langle -\lambda x\star y+\mu\sum_{\omega \in X\sqcup \Omega} x\star \tdun{$\omega$} \star y,F\rangle,
	\end{align*}
	which gives the result.
\end{proof}

\begin{exam}
	Let $\alpha,\beta,\gamma\in \Omega$.
	\begin{align*}
		\tdun{$\alpha$}\star_{\lambda,\mu} \tddeux{$\beta$}{$\gamma$}
		=&-\lambda\left(\tdun{$\alpha$}\tddeux{$\beta$}{$\gamma$}+\tdtroisun{$\beta$}{$\gamma$}{$\alpha$}
		+\tdtroisdeux{$\beta$}{$\gamma$}{$\alpha$}\right)\\
		&+\mu\sum_{\omega \in \Omega}\left(
		\tdun{$\alpha$}\tdun{$\omega$}\tddeux{$\beta$}{$\gamma$}+\tddeux{$\omega$}{$\alpha$}\tddeux{$\beta$}{$\gamma$}
		+\tdun{$\alpha$}\tdtroisun{$\beta$}{$\gamma$}{$\omega$}+\tdquatreun{$\beta$}{$\gamma$}{$\omega$}{$\alpha$}
		+\tdquatredeux{$\beta$}{$\gamma$}{$\omega$}{$\alpha$}
		+\tdun{$\alpha$}\tdtroisdeux{$\beta$}{$\gamma$}{$\omega$}+\tdquatretrois{$\beta$}{$\gamma$}{$\omega$}{$\alpha$}
		+\tdquatrequatre{$\beta$}{$\gamma$}{$\omega$}{$\alpha$}+\tdquatrecinq{$\beta$}{$\gamma$}{$\omega$}{$\alpha$}\right)\\
		&+\mu\sum_{x \in X}\left(
		\tdun{$\alpha$}\tdun{$x$}\tddeux{$\beta$}{$\gamma$}+
		\tdun{$\alpha$}\tdtroisun{$\beta$}{$\gamma$}{$x$}+\tdquatreun{$\beta$}{$\gamma$}{$x$}{$\alpha$}
		+\tdun{$\alpha$}\tdtroisdeux{$\beta$}{$\gamma$}{$x$}+\tdquatretrois{$\beta$}{$\gamma$}{$x$}{$\alpha$}
		+\tdquatrequatre{$\beta$}{$\gamma$}{$x$}{$\alpha$}\right).
	\end{align*}	
	%{$\beta$}{$\gamma$}{$\omega$}{$\alpha$}
\end{exam}	

\section{Free $\Omega$-cocycle infinitesimal unitary bialgebras}\label{uni}
The objective of this section is to investigate weighted infinitesimal unitary bialgebras within the framework of operated algebras. This perspective naturally gives rise to the notion of weighted $\Omega$-cocycle infinitesimal unitary bialgebras. We establish that $\hrts$ serves as a free object in one of these categories.
As an application, we get some isomorphisms between different cocycle infinitesimal unitary bialgebras.
\subsection{Main theorem}
\begin{defn}\cite[Section~1.2]{Guo09}
	\begin{enumerate}
		\item
		An {\bf $\Omega$-operated monoid } is a monoid $M$ together with a set of operators $P_{\omega}: M\to M$, $\omega\in \Omega$.
		\item
		An {\bf $\Omega$-operated unitary algebra } is a unitary algebra $A$ together with a set of linear operators $P_{\omega}: A\to A$, $\omega\in \Omega$.
	\end{enumerate}
\end{defn}

\begin{exam}
	Here are some examples of $\Omega$-operated monoid and $\Omega$-operated unitary algebras.
	\begin{enumerate}
		
		\item The monoid $\rfs$ generated by decorated planar rooted forests constructed in Section~\mref{sucsec:deco} of this paper, together with a family of grafting operators $ \{B_{\omega}^+\mid \omega\in \Omega\}$, forms an {\bf $\Omega$-operated monoid}.
		\item A {\bf differential algebra of weight $\lambda$} \cite{GK08} is defined to be an algebra $A$ together
		with a linear operator $d : A \to A$ such that
		\[
		d(xy) = d(x)y + x d(y) + \lambda d(x)d(y), \quad \forall x, y \in A.
		\]
		\item A {\bf difference algebra} \cite{RM65} is an algebra $A$ with an algebra endomorphism on $A$.
		\item Let $\lambda$ be fixed in the ground ring {\bf k}. A {\bf Rota-Baxter algebra} (of weight $\lambda$) \cite{Gub} is defined to be an associative algebra $A$ with a linear operator $P$ such that
		\begin{equation*}
			P(x)P(y) = P(xP(y)) + P(P(x)y) + \lambda P(xy), \quad \forall x,y \in A.
		\end{equation*}
		\item  A {\bf Rota-Baxter family} \cite{EGP,Guo09} on an algebra $A$ is a collection of linear operators $P_\omega$ on
		$A$ with $\omega$ in a semigroup $\Omega$, such that	
		\[
		P_\alpha(x) P_\beta(y) = P_{\alpha\beta}(P_\alpha(x)y) + P_{\alpha\beta}(x P_\beta(y)) + \lambda P_{\alpha\beta}(xy), \quad \forall x, y \in A,\ \alpha, \beta \in \Omega.
		\]
		Rota-Baxter families have extensive applications in the renormalization of quantum field theory\cite{EGP}.
	\end{enumerate}
\end{exam}

\begin{defn}\cite{ZCGL18}
	Let $\lambda$ be a given element of $\bfk$.
	\begin{enumerate}
		\item  An  {\bf $\Omega$-operated  $\epsilon$-bialgebra} {\bf of weight $\lambda$} is an $\epsilon$-bialgebra $H$ of weight $\lambda$ together with a set of linear operators $P_{\omega}: H\to H$, $\omega\in \Omega$.
		\mlabel{it:def1}
		\item Let $(H,\, \{P_{\omega}\mid \omega \in \Omega\})$ and $(H',\,\{P'_{\omega}\mid \omega\in \Omega\})$ be two $\Omega$-operated $\epsilon$-bialgebras of weight $\lambda$. A linear map $\phi : H\rightarrow H'$ is called an {\bf $\Omega$-operated $\epsilon$-bialgebra morphism} if $\phi$ is a morphism of $\epsilon$-bialgebras of weight $\lambda$ and $\phi \circ P_\omega = P'_\omega \circ\phi$ for $\omega\in \Omega$.
	\end{enumerate}
\end{defn}

By a  pair weight 1-cocycle condition, we then propose

\begin{defn}
	Let $\lambda$ be a given element of $\bfk$.
	\begin{enumerate}

		\item \mlabel{it:def3}
		An {\bf $\Omega$-cocycle $\epsilon$-unitary bialgebra} {\bf of weight $\lambda$} is an $\Omega$-operated  $\epsilon$-unitary bialgebra $(H,\,m,\,1_H, \Delta, \{P_{\omega}\mid \omega \in \Omega\})$ of weight $\lambda$ satisfying the  pair weight $1$-cocycle condition:
		\begin{equation}
			\Delta P_{\omega} = P_{\omega} \otimes (-\lambda1_H) + \mu \id \ot 1_H+ (\id\otimes P_{\omega}) \Delta \quad \text{ for } \omega\in \Omega.
			\mlabel{eq:eqiterated}
		\end{equation}
		
		\item The {\bf free $\Omega$-cocycle $\epsilon$-unitary bialgebra of weight $\lambda$ on a set $X$} is an $\Omega$-cocycle $\epsilon$-unitary bialgebra
		$(H_{X},\,m_{X}, \,1_{H_X}, \Delta_{X}, \,\{P_{\omega}\mid \omega \in \Omega\})$ of weight $\lambda$ together with a set map $j_X: X \to H_{X}$  with the property that,
		for any $\Omega$-cocycle $\epsilon$-unitary bialgebra $(H,\,m,\,1_H, \Delta,\,\{P'_{\omega}\mid \omega \in \Omega\})$ of weight $\lambda$ and any set map
		$f: X\to H$ such that for any $x\in X$,
		$$\Delta (f(x))=\mu (1_H \ot 1_H)-\lambda(f(x)\ot 1_H+ 1_H\ot f(x)),$$
		there is a unique $\Omega$-operated $\epsilon$-unitary bialgebras unique morphism $\free{f}:H_X\to H$
		such that the diagram
		\begin{align*}
			\xymatrix{
				X \ar[dr]_{f} \ar[r]^{j_X  }
				& H_X \ar[d]^{\bar{f}}  \\
				& H            }
		\end{align*}
		commutes.
		\mlabel{it:def4}
	\end{enumerate}
	\mlabel{defn:xcobi}
\end{defn}

When $\Omega$ is a singleton set, we will omit it.
From now on, our discussion takes place on $\hrts= \bfk \rfs$. The following results generalizes the universal properties which were studied in~\cite{CK98, Fo3, Guo09, Moe01, ZGG16}.

\begin{lemma}\cite[Theorem~4.5]{ZCGL18}
	Let $j_{X}: X\hookrightarrow \rfs$, $x \mapsto \bullet_{x}$ be the natural embedding and $m_{\RT}$ be the concatenation product. Then, we have the following.
	\begin{enumerate}
		\item
		The quadruple $(\rfs, \,\mul,\, 1, \, \{B_{\omega}^+\mid \omega\in \Omega\})$ together with the $j_X$ is the free $\Omega$-operated monoid on $X$.
		\mlabel{it:fomonoid}
		\item
		The quadruple $(\hrts, \,\mul,\,1, \, \{B_{\omega}^+\mid \omega\in \Omega\})$ together with the $j_X$ is the free $\Omega$-operated unitary algebra on $X$.
		\mlabel{it:fualg}
	\end{enumerate}
	\mlabel{lemm:free}
\end{lemma}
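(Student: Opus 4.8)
The plan is to establish (a) first and then obtain (b) by linearisation. For (a), I would verify the universal property of the free $\Omega$-operated monoid directly: given an arbitrary $\Omega$-operated monoid $(N,\cdot,1_N,\{Q_\omega\mid\omega\in\Omega\})$ and a set map $f\colon X\to N$, I must produce a \emph{unique} $\Omega$-operated monoid morphism $\bar{f}\colon\rfs\to N$ with $\bar{f}\circ j_X=f$. The structural input that makes everything work is the inductive description $\rfs=\bigcup_{n\ge 0}\calf_n$ with $\calf_{n+1}=M\bigl(\bullet_X\sqcup\bigsqcup_{\omega\in\Omega}B^+_\omega(\calf_n)\bigr)$ recalled in the excerpt: it exhibits every forest uniquely as a concatenation of trees, and every tree of positive depth uniquely as $B^+_\omega(H)$ with $\dep(H)<\dep(B^+_\omega(H))$. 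This unique factorisation is exactly what licenses a recursive definition of $\bar{f}$.

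Concretely, I would define $\bar{f}$ by induction on $\dep(F)$ with an inner induction on $\bre(F)$. Put $\bar{f}(\etree)=1_N$ and $\bar{f}(\bullet_x)=f(x)$ for $x\in X$; this settles $\dep(F)=0$, since such $F$ are products $\bullet_{x_1}\cdots\bullet_{x_m}$ and $\bar{f}(F)=f(x_1)\cdots f(x_m)$ is forced by multiplicativity. For a forest $F=T_1\cdots T_m$ of breadth $m\ge 2$, set $\bar{f}(F)=\bar{f}(T_1)\cdots\bar{f}(T_m)$ using the product of $N$, reducing to breadth $1$. For a single tree $T$ of positive depth, write $T=B^+_\omega(H)$ with $\dep(H)<\dep(T)$ and set $\bar{f}(T)=Q_\omega(\bar{f}(H))$, invoking the induction hypothesis. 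Because each decomposition is unambiguous, $\bar{f}$ is well defined; it is a monoid morphism by construction; and it intertwines the operators, since for \emph{any} $G\in\rfs$ the forest $B^+_\omega(G)$ is a tree whose defining clause gives $\bar{f}(B^+_\omega(G))=Q_\omega(\bar{f}(G))$. Uniqueness is immediate: any operated monoid morphism $h$ with $h\circ j_X=f$ satisfies the same three recursions, so $h=\bar{f}$ by the same double induction.

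For (b), I would deduce the statement from (a) by passing to the monoid algebra. By definition $\hrts=\bfk\rfs$ has $\rfs$ as a $\bfk$-basis, and each $B^+_\omega$ on $\hrts$ is the linear extension of the corresponding set map on $\rfs$. Given an $\Omega$-operated unitary algebra $(A,m,1_A,\{Q_\omega\})$ and a set map $f\colon X\to A$, I regard $(A,m,1_A,\{Q_\omega\})$ as an $\Omega$-operated monoid (forgetting linearity of the $Q_\omega$), apply (a) to get the operated monoid morphism $g\colon\rfs\to A$ with $g\circ j_X=f$, and extend $g$ linearly to $\bar{f}\colon\hrts\to A$. Since $\rfs$ is a basis, $\bar{f}$ is the unique linear extension of $g$; it is an algebra morphism because it is multiplicative and unital on the basis, and it commutes with each $B^+_\omega$ because $\bar{f}$, $B^+_\omega$ and $Q_\omega$ are all linear and the identity $\bar{f}\circ B^+_\omega=Q_\omega\circ\bar{f}$ already holds on the basis by (a). Uniqueness transfers likewise, since any operated algebra morphism extending $f$ restricts on the basis to an operated monoid morphism, which must be $g$.

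The only genuinely delicate point is the unique-factorisation claim underlying the recursion in (a): that the expression of a forest as a concatenation of trees, and of a nontrivial tree as $B^+_\omega(H)$, is unambiguous. This is not a computation but a structural fact read off the filtration $\calf_0\subseteq\calf_1\subseteq\cdots$; making it rigorous amounts to noting that each $M(-)$ appearing in the construction is a genuinely free monoid and that the generating summands $\bullet_X$ and $B^+_\omega(\calf_n)$ are pairwise disjoint with each $B^+_\omega$ injective. Everything else is routine bookkeeping, and the statement indeed coincides with \cite[Theorem~4.5]{ZCGL18}.
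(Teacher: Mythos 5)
Your proposal is correct. Note that the paper does not actually prove this lemma: it quotes it as \cite[Theorem~4.5]{ZCGL18}, and the proof given there (building on Guo's construction of free operated monoids) is essentially the argument you wrote out — the free-monoid structure $\rfs=M(\rts)$, the unique decomposition of every tree other than the $\bullet_x$ ($x\in X$) as $B^+_\omega(H)$ with $\dep(H)<\dep(B^+_\omega(H))$ (including $\bullet_\omega=B^+_\omega(\etree)$, which your ``positive depth'' clause correctly covers), the recursive definition of $\bar{f}$ by the double induction on depth and breadth, and the passage to part (b) by linearizing over the basis $\rfs$ of $\hrts=\bfk\rfs$. You also correctly isolated the one genuinely structural input, namely that $\bullet_X$ and the images $B^+_\omega(\rfs)$ are pairwise disjoint with each $B^+_\omega$ injective, so your write-up is a faithful reconstruction of the cited proof rather than a new route.
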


\begin{theorem}\label{thm:propm}
	Let $j_{X}: X\hookrightarrow \rfs$, $x \mapsto \bullet_{x}$ be the natural embedding and $m_{\RT}$ be the concatenation product. Then
	the quintuple $(\hrts, \,\mul,\,1, \, \col, \, \{B_{\omega}^+\mid \omega\in \Omega\})$ together with the $j_X$ is the free $\Omega$-cocycle $\epsilon$-unitary bialgebra of weight $\lambda$ on $X$.
	\mlabel{it:fubialg}
\end{theorem}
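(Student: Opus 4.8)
The plan is to verify the universal property directly, building on the fact (Lemma~\mref{lemm:free}(\mref{it:fualg})) that $(\hrts, \mul, 1, \{B_\omega^+\mid \omega\in\Omega\})$ is already the free $\Omega$-operated unitary algebra on $X$, combined with the bialgebra and cocycle structure supplied by Theorem~\mref{thm:rt2} and Eq.~(\mref{eq:cdbp}). First I would note that these results make the quintuple $(\hrts, \mul, 1, \col, \{B_\omega^+\})$ a genuine object of the category of $\Omega$-cocycle $\epsilon$-unitary bialgebras of weight $\lambda$, and that $j_X(x)=\bullet_x$ satisfies the required generator identity $\col(\bullet_x)=\mu(\etree\ot\etree)-\lambda(\bullet_x\ot\etree+\etree\ot\bullet_x)$ by Eq.~(\mref{eq:dele}). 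Now fix a target $\Omega$-cocycle $\epsilon$-unitary bialgebra $(H,m,1_H,\Delta,\{P'_\omega\})$ of weight $\lambda$ and a set map $f:X\to H$ with $\Delta(f(x))=\mu(1_H\ot 1_H)-\lambda(f(x)\ot 1_H+1_H\ot f(x))$.

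For the existence and uniqueness of the underlying map, I would invoke Lemma~\mref{lemm:free}(\mref{it:fualg}): there is a unique $\Omega$-operated unitary algebra morphism $\bar{f}:\hrts\to H$ with $\bar{f}\circ j_X=f$, determined by $\bar{f}(1)=1_H$, $\bar{f}(FG)=\bar{f}(F)\bar{f}(G)$ and $\bar{f}\circ B_\omega^+=P'_\omega\circ\bar{f}$. Since any $\Omega$-operated $\epsilon$-unitary bialgebra morphism is in particular such a morphism, uniqueness is immediate, and it remains only to check that this $\bar{f}$ is also a coalgebra morphism, that is, $(\bar{f}\ot\bar{f})\circ\col=\Delta\circ\bar{f}$.

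The heart of the argument — and the step I expect to be the main obstacle — is this coalgebra compatibility, which I would prove by induction on $\dep(F)$ for basis elements $F\in\rfs$, with an inner induction on $\bre(F)$, parallel to the proofs of Lemma~\mref{lem:cclosed} and Theorem~\mref{thm:rt1}. The breadth reduction is purely formal: if the identity holds for $F_1$ and $F_2$, then applying $\bar{f}\ot\bar{f}$ to the weighted Leibniz rule of Lemma~\mref{lem:colff}, using that $\bar{f}$ is multiplicative and hence compatible with the bimodule actions of Eq.~(\mref{eq:dota}), and then invoking the weighted derivation rule Eq.~(\mref{eq:cocycle}) for $\Delta$ in $H$ with $a=\bar{f}(F_1)$, $b=\bar{f}(F_2)$, yields the identity for $F_1F_2$. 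This reduces everything to the depth-zero generators — $F=\etree$, where both sides equal $-\lambda(1_H\ot 1_H)$ using Remark~\mref{remk:4rem}(\mref{remk:units}), and $F=\bullet_x$, which is precisely the hypothesis imposed on $f$ — together with the breadth-one, positive-depth case.

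For that last case, write $F=B_\omega^+(\lbar{F})$ with $\dep(\lbar{F})<\dep(F)$, so the inductive hypothesis gives $(\bar{f}\ot\bar{f})\col(\lbar{F})=\Delta(\bar{f}(\lbar{F}))$. Applying $\bar{f}\ot\bar{f}$ to the defining relation Eq.~(\mref{eq:dbp}) and using $\bar{f}B_\omega^+=P'_\omega\bar{f}$ along with the intertwining $(\bar{f}\ot\bar{f})(\id\ot B_\omega^+)=(\id\ot P'_\omega)(\bar{f}\ot\bar{f})$ gives
\[(\bar{f}\ot\bar{f})\col(F)=-\lambda P'_\omega(\bar{f}(\lbar{F}))\ot 1_H+\mu\,\bar{f}(\lbar{F})\ot 1_H+(\id\ot P'_\omega)\Delta(\bar{f}(\lbar{F})),\]
while the cocycle condition Eq.~(\mref{eq:eqiterated}) applied to $\bar{f}(\lbar{F})$ computes $\Delta(\bar{f}(F))=\Delta(P'_\omega(\bar{f}(\lbar{F})))$ to the identical expression. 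The two sides therefore coincide, completing the double induction. The only real subtlety is organizing the two nested inductions consistently — fixing depth $n+1$ and proving the claim across all breadths while assuming the full statement through depth $n$ — exactly as in Theorem~\mref{thm:rt1}; beyond that, no computation other than the cocycle and Leibniz identities is required, and the universal property follows.
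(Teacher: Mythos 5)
Your proposal is correct and follows essentially the same route as the paper: exhibit the structure via Theorem~\ref{thm:rt2} and Eq.~(\ref{eq:dbp}), obtain the unique $\bar{f}$ from the operated-algebra freeness of Lemma~\ref{lemm:free}, and prove $(\bar{f}\ot\bar{f})\circ\col=\Delta\circ\bar{f}$ by the same double induction on depth and breadth, with Lemma~\ref{lem:colff} plus Eq.~(\ref{eq:cocycle}) handling products and Eq.~(\ref{eq:dbp}) plus Eq.~(\ref{eq:eqiterated}) handling $B_\omega^+(\lbar{F})$. The only cosmetic difference is that the paper treats the depth-zero case $\bullet_{x_1}\cdots\bullet_{x_m}$ by an explicit computation via Lemma~\ref{lem:rt11}, whereas you absorb it into the formal breadth reduction, which works equally well since at depth zero the factors have strictly smaller breadth.
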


\begin{proof}
	By Theorem~\mref{thm:rt2}, $(\hrts, \,\mul,\,1, \col)$ is an $\epsilon$-unitary bialgebra of weight $\lambda$. Then it follows from Eq.~(\mref{eq:dbp}) that $(\hrts, \,\mul,\,1, \col,\,\{B_{\omega}^+\mid \omega\in \Omega\})$ is an $\Omega$-cocycle $\epsilon$-unitary bialgebra of weight $\lambda$.
	
	We next proceed to show the freeness. Let $(H,\, m,\,1_H, \Delta,\, \{P_{\omega}\mid \omega \in \Omega\})$ be an $\Omega$-cocycle $\epsilon$-bialgebra of weight $\lambda$ and
	$f: X\to H$ a set map such that
	\begin{align*}
		\Delta (f(x))=\mu (1_H \ot 1_H)-\lambda(f(x)\ot 1_H+ 1_H\ot f(x)) \text{ for all } x \in X.
	\end{align*}
	Particularly, $(H,\, m,\, 1_H,\, \{P_{\omega}\mid \omega \in \Omega\})$ is an $\Omega$-operated unitary algebra.
	By Lemma~\mref{lemm:free} (\mref{it:fualg}), there exists a unique $\Omega$-operated unitary algebra morphism $\free{f}:\hrts \to H$ such that $\free{f}\circ j_X={f}$.
	It is sufficient to  check the compatibility of the coproducts $\Delta$ and $\col$ for which we verify
	\begin{equation}
		\Delta \free{f} (F)=(\free{f}\ot \free{f}) \col (F)\quad \text{for all } F\in \rfs,
		\mlabel{eq:copcomp}
	\end{equation}
	by induction on the depth $\dep(F)\geq 0$.
	For the initial step of $\dep(F)=0$,
	we have $F = \bullet_{x_1} \cdots \bullet_{x_m}$ for some $m\geq 0$, with the convention that $F=\etree$ when $m=0$.
	If $m=0$, then by Remark~\mref{remk:4rem}~(\mref{remk:units}) and Eq.~(\mref{eq:dele}),
	\begin{align*}
		\Delta  \bar{f} (F) &= \Delta  \bar{f} (1)=\Delta (1_H)=-\lambda (1_H\ot 1_H) =-\lambda \bar{f} (1)\ot\bar{f} (1) =(\bar{f}\otimes\bar{f})(-\lambda1\ot 1)=(\bar{f}\otimes\bar{f})\col(1).
	\end{align*}
	If $m\geq 1$, then we have
	\begin{align*}
		&\ \Delta\free{f}(\bullet_{x_1} \cdots \bullet_{x_m})
		= \Delta \Big( \free{f}(\bullet_{x_1})\cdots\free{f}(\bullet_{x_{m}}) \Big)\\
		=&\ \cdots=\sum_{i=1}^m \Big(\free{f}(\bullet_{x_{1}})\cdots\free{f}(\bullet_{x_{i-1}})\Big)\cdot
		\Delta\big(\free{f}(\bullet_{x_{i}})\big) \cdot \Big(\free{f}(\bullet_{x_{i+1}})\cdots\free{f}(\bullet_{x_{m}})\Big)\\
		&+\lambda\sum_{i=1}^{m-1}\free{f}(\bullet_{x_{1}})\cdots\free{f}(\bullet_{x_{i}}) \ot \free{f}(\bullet_{x_{i+1}})\cdots \free{f}(\bullet_{x_{m}})
		\quad \text{(by Eq.~(\ref{eq:cocycle}))}\\
		=&\ \sum_{i=1}^m \Big(\free{f}(\bullet_{x_{1}})\cdots\free{f}(\bullet_{x_{i-1}})\Big)\cdot
		\Big(\mu \free{f}(1_H) \ot \free{f}(1_H)-\lambda \free{f}(\bullet_{x_{i}})\ot 1_H-\lambda 1_H\ot \free{f}(\bullet_{x_{i}})\Big)\cdot \Big(\free{f}(\bullet_{x_{i+1}})\cdots\free{f}(\bullet_{x_{m}})\Big)\\
		&+\lambda\sum_{i=1}^{m-1}\free{f}(\bullet_{x_{1}})\cdots\free{f}(\bullet_{x_{i}}) \ot \free{f}(\bullet_{x_{i+1}})\cdots \free{f}(\bullet_{x_{m}})\\
		&\ \  \ \ \ (\text{by}\,\, \Delta(\free{f}(\bullet_{x_{i}}))=\Delta(f(x_i))=\mu (f(1_H) \ot f(1_H))-\lambda (f(x_i)\ot 1_H+1_H\ot f(x_i)))\\
		=&\ \mu \sum_{i=1}^m \free{f}(\bullet_{x_{1}})\cdots
		\free{f}(\bullet_{x_{i-1}}) \ot \free{f}(\bullet_{x_{i+1}})\cdots \free{f}(\bullet_{x_{m}})
		-\lambda \sum_{i=1}^m \free{f}(\bullet_{x_{1}})\cdots\cdot
		\free{f}(\bullet_{x_{i}})\ot \free{f}(\bullet_{x_{i+1}})\cdots\free{f}(\bullet_{x_{m}})\\
		&\ -\lambda\sum_{i=1}^m \free{f}(\bullet_{x_{1}})\cdots\cdot
		\free{f}(\bullet_{x_{i-1}})\ot \free{f}(\bullet_{x_{i}})\cdots\free{f}(\bullet_{x_{m}})
		+\lambda\sum_{i=1}^{m-1}\free{f}(\bullet_{x_{1}})\cdots\free{f}(\bullet_{x_{i}}) \ot \free{f}(\bullet_{x_{i+1}})\cdots \free{f}(\bullet_{x_{m}})\\
		=&\ \mu \sum_{i=1}^m \free{f}(\bullet_{x_{1}})\cdots
		\free{f}(\bullet_{x_{i-1}}) \ot \free{f}(\bullet_{x_{i+1}})\cdots \free{f}(\bullet_{x_{m}})-\lambda \sum_{i=1}^{m-1} \free{f}(\bullet_{x_{1}})\cdots\cdot
		\free{f}(\bullet_{x_{i}})\ot \free{f}(\bullet_{x_{i+1}})\cdots\free{f}(\bullet_{x_{m}})\\
		&\ -\lambda\free{f}(\bullet_{x_{1}})\cdots\cdot
		\free{f}(\bullet_{x_{m}})\ot \free{f}(\etree)
		-\lambda\sum_{i=1}^m \free{f}(\bullet_{x_{1}})\cdots\cdot
		\free{f}(\bullet_{x_{i-1}})\ot \free{f}(\bullet_{x_{i}})\cdots\free{f}(\bullet_{x_{m}})\\
		&\
		+\lambda\sum_{i=1}^{m-1}\free{f}(\bullet_{x_{1}})\cdots\free{f}(\bullet_{x_{i}}) \ot \free{f}(\bullet_{x_{i+1}})\cdots \free{f}(\bullet_{x_{m}})\\
		=&\ \mu \sum_{i=1}^m \free{f}(\bullet_{x_{1}})\cdots
		\free{f}(\bullet_{x_{i-1}}) \ot \free{f}(\bullet_{x_{i+1}})\cdots \free{f}(\bullet_{x_{m}})
		-\lambda\free{f}(\bullet_{x_{1}})\cdots\cdot
		\free{f}(\bullet_{x_{m}})\ot \free{f}(\etree)\\
		&\ -\lambda\sum_{i=1}^m \free{f}(\bullet_{x_{1}})\cdots\cdot
		\free{f}(\bullet_{x_{i-1}})\ot \free{f}(\bullet_{x_{i}})\cdots\free{f}(\bullet_{x_{m}})\\
		=&\ \mu \sum_{i=1}^m \free{f}(\bullet_{x_{1}})\cdots
		\free{f}(\bullet_{x_{i-1}}) \ot \free{f}(\bullet_{x_{i+1}})\cdots \free{f}(\bullet_{x_{m}})
		-\lambda\free{f}(\bullet_{x_{1}})\cdots\cdot
		\free{f}(\bullet_{x_{m}})\ot \free{f}(\etree)\\
		&\ -\lambda\sum_{i=0}^{m-1} \free{f}(\bullet_{x_{1}})\cdots\cdot
		\free{f}(\bullet_{x_{i}})\ot \free{f}(\bullet_{x_{i+1}})\cdots\free{f}(\bullet_{x_{m}})\\
		=&\ (\free{f}\ot\free{f})\left(\mu \sum_{i=1}^{m}\bullet_{x_{1}}\cdots\bullet_{x_{i-1}}\otimes\bullet_{x_{i+1}}\cdots\bullet_{x_{m}}
		-\lambda
		\sum_{i=0}^{m}
		\bullet_{x_{1}}\cdots\bullet_{x_{i}}\otimes\bullet_{x_{i+1}}\cdots\bullet_{x_{m}}\right)\\
		=&\ (\free{f}\ot\free{f})\col(\bullet_{x_1} \cdots \bullet_{x_m})\quad(\text{by Lemma~\ref{lem:rt11}}).
	\end{align*}
	Suppose Eq.~(\mref{eq:copcomp}) holds for $\dep(F)\leq n$ for an $n\geq 0$ and consider the case of $\dep(F)=n+1$.
	For this case we apply the induction on the breadth $\bre(F)$. Since $\dep(F)=n+1\geq1$, we have $F\neq 1$ and $\bre(F)\geq 1$.
	If $\bre(F)=1$, we have $F=B_{\omega}^+(\lbar{F})$ for some $\lbar{F}\in\rfs$ and $\omega\in \Omega$.  Then
	\allowdisplaybreaks{
		\begin{align*}
			\Delta \free{f}(F)&=\Delta \free{f} (B_{\omega}^{+}(\lbar{F}))=\Delta P_\omega(\free{f} (\lbar{F}))\quad(\text{by $\bar{f}$ being an operated algebra morphism})\\
			&=P_{\omega}(\free{f}(\lbar{F})) \ot (-\lambda1_H)+\mu \free{f}(\lbar{F}) \ot 1_H + (\id \ot P_{\omega})\Delta \free{f}(\lbar{F})
			\quad(\text{by Eq.~(\mref{eq:eqiterated})})\\
			&=P_{\omega}(\free{f}(\lbar{F})) \ot (-\lambda1_H)+\mu \free{f}(\lbar{F}) \ot 1_H + (\id\ot P_{\omega})(\free{f}\ot \free{f}) \col (\lbar{F}) \\
			&\hspace{6cm}(\text{by the induction hypothesis on~}\dep(F)) \\
			&=P_{\omega}(\free{f}(\lbar{F})) \ot (-\lambda1_H)+\mu \free{f}(\lbar{F}) \ot 1_H + (\free{f} \ot P_{\omega}\free{f}) \col (\lbar{F})\\
			&=\free{f}(B_{\omega}^+(\lbar{F})) \ot (-\lambda1_H)+\mu \free{f}(\lbar{F}) \ot 1_H + (\free{f} \ot \free{f}B_{\omega}^+) \col (\lbar{F}) \\
			&\hspace{6cm}(\text{by $\bar{f}$ being an operated algebra morphism}) \\
			&=(\free{f}\ot \free{f})\Big(-\lambda B_{\omega}^{+}(\lbar{F}) \otimes \etree+ \mu\lbar{F} \otimes \etree + (\id\otimes B_{\omega}^{+})\col(\lbar{F})\Big) \\
			&=(\free{f}\ot \free{f}) \col (B_{\omega}^+(\lbar{F}))\quad(\text{by Eq.~(\mref{eq:dbp})}) \\
			&=(\free{f}\ot \free{f}) \col (F).
		\end{align*}
	}
	Assume Eq.~(\mref{eq:copcomp}) holds for $\dep(F)=n+1$ and $\bre(F)\leq m$, in addition to $\dep(F)\leq n$ by the first induction hypothesis, and consider the case when $\dep(F)=n+1$ and $\bre(F)=m+1\geq 2$. Then  $F=F_{1}F_{2}$ for some $F_{1},F_{2}\in\rfs$ with $0< \bre(F_{1}), \bre(F_{2}) < m+1$.
	With Sweedler notation, we may write
	\begin{align*}
		\col(F_1)=\sum_{(F_1)}F_{1(1)}\otimes F_{1(2)} \text{\ and \ } \col(F_2)=\sum_{(F_2)}F_{2(1)}\otimes F_{2(2)}.
	\end{align*}
	By the induction hypothesis on the breadth, we have
	\begin{align*}
		&\Delta(\free{f}(F_{1}))=(\free{f} \ot \free{f})\col(F_{1})=\sum_{(F_{1})}\free{f}(F_{1(1)})\ot\free{f} (F_{1(2)}),\\
		&\Delta(\free{f}(F_{2}))=(\free{f} \ot \free{f})\col(F_{2})=
		\sum_{(F_{2})}\free{f}(F_{2(1)})\ot\free{f} (F_{2(2)}).
	\end{align*}
	Thus
	\allowdisplaybreaks{
		\begin{align*}
			\Delta \free{f}(F)=&\ \Delta \free{f} (F_{1}F_{2})=\Delta(\free{f}(F_1)\free{f}(F_2))\\
			=&\ \free{f}(F_{1})\cdot \Delta (\free{f}(F_{2}))+\Delta(\free{f}(F_{1})) \cdot \free{f}(F_{2})+\lambda\free{f}(F_{1})\ot \free{f}(F_{2})\quad (\text{by Eq.~(\mref{eq:cocycle})})\\
			=&\ \free{f}(F_{1})\cdot \bigg(\sum_{(F_{2})}\free{f}(F_{2(1)})\ot \free{f}(F_{2(2)})\bigg)+
			\bigg(\sum_{(F_{1})}\free{f}(F_{1(1)})\ot\free{f} (F_{1(2)})\bigg) \cdot \free{f}(F_{2})+\lambda\free{f}(F_{1})\ot \free{f}(F_{2})\\
			=&\ \sum_{(F_{2})}\free{f}(F_{1})\free{f}(F_{2(1)})\ot\free{f}(F_{2(2)})
			+\sum_{(F_{1})}\free{f}(F_{1(1)})\ot\free{f} (F_{1(2)})\free{f}(F_{2})+\lambda\free{f}(F_{1})\ot \free{f}(F_{2}) \ (\text{by Eq.~(\ref{eq:dota})})\\
			=&\ \sum_{(F_{2})}\free{f}(F_{1}F_{2(1)})\ot\free{f}(F_{2(2)})+\sum_{(F_{1})}\free{f}(F_{1(1)})\ot\free{f} (F_{1(2)}F_{2})+\lambda\free{f}(F_{1})\ot \free{f}(F_{2})\\
			=&\ (\free{f}\ot \free{f})\bigg(\sum_{(F_{2})}F_{1}F_{2(1)}\ot F_{2(2)}\bigg)+(\free{f}\ot \free{f})\bigg( \sum_{(F_{1})}F_{1(1)}\ot F_{1(2)}F_{2}\bigg)+(\free{f}\ot \free{f})(\lambda F_1\ot F_2)\\
			=&\ (\free{f}\ot \free{f})\left(\sum_{(F_{2})}F_{1}F_{2(1)}\ot F_{2(2)}
			+\sum_{(F_{1})}F_{1(1)}\ot F_{1(2)}F_{2} +\lambda F_1\ot F_2\right)\\
			=&\ (\free{f}\ot \free{f})\left(F_{1}\cdot \sum_{(F_{2})}F_{2(1)}\ot F_{2(2)}+ \Big(\sum_{(F_{1})}F_{1(1)}\ot F_{1(2)}\Big) \cdot F_{2} +\lambda F_1\ot F_2\right) \quad (\text{by Eq.~(\ref{eq:dota})})\\
			=&\ (\free{f} \ot \free{f})(F_{1} \cdot \col(F_{2})+\col(F_{1})\cdot F_{2}+\lambda(F_1\ot F_2))\\
			=&\ (\free{f}\ot \free{f})\col(F_{1}F_{2})  \quad(\text{by Lemma~\mref{lem:colff}})\\
			=&\ (\free{f}\ot \free{f})\col(F).
		\end{align*}
	}
	This completes the induction on the breadth and hence the induction on the depth.
\end{proof}

%We consequently obtain the following result.
%\begin{coro}
%The quintuple $(\hck(\Omega), \,\mul,\,1, \col,\,\counit, \, \{B_{\omega}^+\mid \omega\in \Omega\})$ is the free $\Omega$-cocycle $\epsilon$-unitary bialgebra on the empty set.
%\mlabel{cor:propm}
%\end{coro}
%\begin{proof}
%It follows from Theorem~\mref{thm:propm} (\mref{it:fubialg}) by taking $X=\emptyset$.
%\end{proof}

Let $X=\emptyset$. Then we obtain a freeness of $\hck(\emptyset, \Omega)$, which is the infinitesimal version of decorated noncommutative Connes-Kreimer Hopf algebra by Remark~\mref{re:3ex}~(\mref{it:2ex}).
\begin{coro}
	The quintuple $(\hck(\emptyset, \Omega), \,\mul,\,\etree,\, \col,\,\{B_{\omega}^+\mid \omega\in \Omega\})$ is the free $\Omega$-cocycle $\epsilon$-unitary bialgebra of weight $\lambda$ on the empty set, that is, the initial object in the category of $\Omega$-cocycle $\epsilon$-unitary bialgebras of weight $\lambda$.
	\mlabel{cor:rt16}
\end{coro}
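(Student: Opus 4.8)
The plan is to deduce this corollary directly from Theorem~\mref{thm:propm} by specializing to $X=\emptyset$. First I would observe that when $X=\emptyset$, the decorated planar rooted forests carry no leaf decorations from $X$, so $\rfs$ reduces to $\mathcal{F}(\emptyset,\Omega)$ and $\hrts$ becomes $\hck(\emptyset,\Omega)$; this is precisely the infinitesimal version of the decorated noncommutative Connes-Kreimer Hopf algebra, as recorded in Remark~\mref{re:3ex}~(\mref{it:2ex}). Theorem~\mref{thm:propm} then immediately yields that the quintuple $(\hck(\emptyset,\Omega),\,\mul,\,\etree,\,\col,\,\{B_\omega^+\mid\omega\in\Omega\})$, equipped with the empty embedding $j_\emptyset:\emptyset\to\hck(\emptyset,\Omega)$, is the free $\Omega$-cocycle $\epsilon$-unitary bialgebra of weight $\lambda$ on the empty set.

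The remaining task is to translate this freeness into the statement that $\hck(\emptyset,\Omega)$ is the initial object of the category of $\Omega$-cocycle $\epsilon$-unitary bialgebras of weight $\lambda$. Given any such bialgebra $(H,\,m,\,1_H,\,\Delta,\,\{P_\omega\mid\omega\in\Omega\})$, the unique set map $f:\emptyset\to H$ exists and vacuously satisfies the compatibility condition $\Delta(f(x))=\mu(1_H\otimes 1_H)-\lambda(f(x)\otimes 1_H+1_H\otimes f(x))$, since there are no elements $x\in\emptyset$ to test. The universal property in Definition~\mref{defn:xcobi}~(\mref{it:def4}) therefore produces a unique $\Omega$-operated $\epsilon$-unitary bialgebra morphism $\bar{f}:\hck(\emptyset,\Omega)\to H$, with the commutativity constraint $\bar{f}\circ j_\emptyset=f$ automatic because both composites are the empty map. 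Hence for every object $H$ there is exactly one morphism $\hck(\emptyset,\Omega)\to H$, which is precisely the meaning of initiality.

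The only point requiring any care is confirming that the universal property specializes correctly at $X=\emptyset$: namely, that the hypothesis on $f$ holds vacuously and that the diagram equation $\bar f\circ j_\emptyset=f$ imposes no additional constraint, so that the entire content of the universal property collapses to the existence and uniqueness of $\bar f$. As this is exactly the defining condition for an initial object, no further computation is needed, and the corollary follows.
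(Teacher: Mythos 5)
Your proposal is correct and follows essentially the same route as the paper, which proves the corollary simply by specializing Theorem~\mref{thm:propm} to $X=\emptyset$. Your additional spelling-out of why freeness on the empty set coincides with initiality (the vacuous hypothesis on $f$ and the trivial commutativity of the diagram) is accurate but left implicit in the paper's one-line proof.
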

\begin{proof}
	It follows from Theorem~\mref{thm:propm}~(\mref{it:fubialg}) by taking $X=\emptyset$.
\end{proof}

Taking $\Omega$ to be singleton in Corollary~\mref{cor:rt16}, then all planar rooted forests are decorated by the same letter. In this case, planar rooted forests have no decorations that are precisely planar rooted forests in the classical Connes-Kreimer Hopf algebra under the noncommutative version which was introduced by the first author~\mcite{Foi02} and Holtkamp~\mcite{Hol03}.

\begin{coro}
	Let $\mathcal{F}$ be the set of planar rooted forests without decorations.
	Then the quintuple $(\bfk\calf, \,\mul,\,\etree,\, \col,\, B^+)$ is the free cocycle $\epsilon$-unitary bialgebra of weight $\lambda$ on the empty set, that is, the initial object in the category of $\Omega$-cocycle $\epsilon$-unitary bialgebras of weight $\lambda$.
	\mlabel{coro:rt16}
\end{coro}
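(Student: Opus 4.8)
The plan is to deduce this statement directly from Corollary~\mref{cor:rt16} by specializing $\Omega$ to a singleton set. Recall that the paper's convention is to omit $\Omega$ from the terminology precisely when $\Omega$ is a one-element set, so the target category of \emph{cocycle $\epsilon$-unitary bialgebras of weight $\lambda$} is literally the category of $\Omega$-cocycle $\epsilon$-unitary bialgebras of weight $\lambda$ with $\Omega = \{\omega\}$ a singleton.

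First I would fix $X = \emptyset$ and $\Omega = \{\omega\}$ and examine the free object $\hck(\emptyset, \{\omega\})$ supplied by Corollary~\mref{cor:rt16}. Since there are no leaf-decorations available from $X$ and only one symbol $\omega$ available for both internal and leaf vertices, every vertex of every forest in $\mathcal{F}(\emptyset, \{\omega\})$ carries the same label; consequently the decoration data is redundant, and the forgetful assignment that erases all labels is a bijection between $\mathcal{F}(\emptyset, \{\omega\})$ and the set $\calf$ of undecorated planar rooted forests. I would then check that this bijection is compatible with every piece of structure: it is a monoid isomorphism for concatenation sending $\etree$ to $\etree$, it intertwines the single grafting operator $B^+_\omega$ with $B^+$, and---because the coproduct $\col$ is built inductively from grafting and concatenation alone (Eqs.~(\mref{eq:dele}), (\mref{eq:dbp}), (\mref{eq:delee1}))---it is also a coalgebra isomorphism. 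Hence it is an isomorphism of $\Omega$-cocycle $\epsilon$-unitary bialgebras of weight $\lambda$, carrying $(\hck(\emptyset,\{\omega\}), \mul, \etree, \col, \{B^+_\omega\})$ to $(\bfk\calf, \mul, \etree, \col, B^+)$.

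Because initiality (equivalently, the universal property of the free object on the empty set) is preserved by any isomorphism of the ambient category, the conclusion for $\bfk\calf$ follows at once from the corresponding statement for $\hck(\emptyset,\{\omega\})$. I do not expect a genuine obstacle here: the entire content is the observation that a singleton decoration set carries no information, together with the routine verification that erasing labels respects the product, unit, grafting, and the inductively defined coproduct. The only point demanding a little care is confirming that $\col$ refers to exactly the same inductive recipe on both sides, which is immediate since that recipe never uses the specific value of a decoration.
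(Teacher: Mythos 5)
Your proposal is correct and follows the paper's own route exactly: the paper proves this corollary in one line by specializing Corollary~\mref{cor:rt16} to a singleton $\Omega$, and your argument simply spells out the (routine) verification, implicit in Remark~\mref{re:3ex}, that erasing the now-redundant decorations gives an isomorphism of $\Omega$-cocycle $\epsilon$-unitary bialgebras, under which the universal property transfers. No gap; you have merely made explicit what the paper leaves tacit.
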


\begin{proof}
	It follows from Corollary~\mref{cor:rt16}  by taking $\Omega$ to be a singleton set.
\end{proof}

\subsection{An application: isomorphisms between different $\col$}
In this subsection, we give an application for the universal property of the free $\Omega$-cocycle $\epsilon$-unitary bialgebra $\hrts$ of weight $\lambda$.

\begin{theorem}
	Let $\nu \in \bfk$. We denote by $\phi_\nu$ the unique algebra endomorphism of $\hrts$ such that:
	\begin{itemize}
		\item For any $\omega \in \Omega$, $\phi_\nu \circ B^+_\omega=(B^+_\omega+\nu \id_{\hrts})\circ \phi_\nu$.
		\item For any $x\in X$, $\phi_\nu(\tdun{$x$})=\tdun{$x$}+\nu 1$.
	\end{itemize}
	Then:
	\begin{enumerate}
		\item For any $F\in \rfs$,
		\[\phi_\nu(F)=\sum_{V_X(F)\subseteq I\subseteq V(F)} \nu^{|V(F)\setminus I|} F_{ I},\]
		where $V_X(F)$ is the set of vertices of $F$ decorated by an element of $X$.
		\item For any $\nu,\nu'\in \bfk$, $\phi_\nu\circ \phi_{\nu'}=\phi_{\nu+\nu'}$. As a consequence, $\phi_\nu$ is an algebra automorphism, of inverse $\phi_{-\nu}$.
		\item For any $\nu\in \bfk$, $\phi_\nu$ is a bialgebra isomorphism from $(\hrts,m,\Delta_{\lambda,\mu})$ to $(\hrts,m,\Delta_{\lambda,\mu-\lambda\nu})$.
\end{enumerate}\end{theorem}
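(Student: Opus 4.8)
The three parts build on each other, so the plan is to establish (a) first by induction, then derive (b) and (c) as consequences. For part (a), I would prove the closed formula $\phi_\nu(F)=\sum_{V_X(F)\subseteq I\subseteq V(F)} \nu^{|V(F)\setminus I|} F_{\mid I}$ by induction on $\dep(F)$, with a secondary induction on breadth exactly as in the proofs of Lemma~\mref{lem:colff} and Theorem~\mref{thm:comb}. The base case $\dep(F)=0$ handles $F=\bullet_{x_1}\cdots\bullet_{x_m}$: here every vertex is in $V_X(F)$, so the only admissible $I$ is $V(F)$ itself, forcing $\phi_\nu(F)=F$, which is consistent with the defining relation $\phi_\nu(\bullet_x)=\bullet_x+\nu 1$ once one checks the empty-vertex case. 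For the breadth-$1$, depth-$\geq 1$ step I would write $F=B^+_\omega(\lbar F)$ and use $\phi_\nu(B^+_\omega(\lbar F))=(B^+_\omega+\nu\id)(\phi_\nu(\lbar F))$, expand $\phi_\nu(\lbar F)$ by the induction hypothesis, and observe that applying $B^+_\omega$ corresponds to including the root vertex $\bullet_\omega$ in $I$ (using Remark~\mref{rk:cut}), while applying $\nu\id$ corresponds to excluding it; this splits the sum over subsets $I\subseteq V(F)$ according to whether $\bullet_\omega\in I$. The multiplicativity of $\phi_\nu$ handles breadth $\geq 2$ since the subsets of $V(F_1F_2)$ factor as $I_1\sqcup I_2$ and $\nu^{|V(F)\setminus I|}$ factors correspondingly.

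Part (b) follows either from the formula in (a) by a direct combinatorial computation—composing two such sums and reindexing the intermediate subset—or, more cleanly, by uniqueness: I would check that $\phi_\nu\circ\phi_{\nu'}$ is an algebra endomorphism satisfying the two defining properties of $\phi_{\nu+\nu'}$, namely $(\phi_\nu\circ\phi_{\nu'})\circ B^+_\omega=(B^+_\omega+(\nu+\nu')\id)\circ(\phi_\nu\circ\phi_{\nu'})$ and $(\phi_\nu\circ\phi_{\nu'})(\bullet_x)=\bullet_x+(\nu+\nu')1$, both of which are short direct verifications. Since $\phi_0=\id$, this gives $\phi_\nu\circ\phi_{-\nu}=\id=\phi_{-\nu}\circ\phi_\nu$, so $\phi_\nu$ is an automorphism with inverse $\phi_{-\nu}$.

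Part (c) is the heart of the statement and, I expect, the main obstacle. Since $\phi_\nu$ is already an algebra isomorphism by (b), I only need to verify the coalgebra compatibility
\[(\phi_\nu\otimes\phi_\nu)\circ\Delta_{\lambda,\mu}=\Delta_{\lambda,\mu-\lambda\nu}\circ\phi_\nu.\]
The clean route is to use the combinatorial description of Theorem~\mref{thm:comb} together with the formula from part (a): I would expand both sides as sums over forest biideals and subsets, then match terms. Applying $\phi_\nu\otimes\phi_\nu$ to $\Delta_{\lambda,\mu}(F)=-\lambda\sum_k F_{\mid I_k}\otimes F_{\mid J_k}+\mu\sum_k F_{\mid I_{k-1}}\otimes F_{\mid J_k}$ and expanding each $\phi_\nu(F_{\mid I_k})$ produces a double sum over pairs of subsets, and the key identity to extract is that the $\nu$-dependent cross terms reorganize the $\mu$-coefficient into $\mu-\lambda\nu$: intuitively, deleting a vertex on the right factor (the $\nu$ from $\phi_\nu$ acting on $F_{\mid J_k}$) combined with the $-\lambda$ term mimics the $\mu$-term but shifted by $-\lambda\nu$, precisely accounting for the change $\mu\mapsto\mu-\lambda\nu$. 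The bookkeeping of which vertex indices survive under the chain $I_0\subseteq\cdots\subseteq I_{n_F}$ is delicate, so the real work is a careful index-shifting argument. Alternatively, and likely more robust, I would invoke the universal property from Theorem~\mref{thm:propm}: it suffices to check that $\phi_\nu$ is a morphism of $\Omega$-cocycle $\epsilon$-unitary bialgebras between the source with parameters $(\lambda,\mu)$ and the target with $(\lambda,\mu-\lambda\nu)$, which reduces to verifying the single generator condition $\Delta_{\lambda,\mu-\lambda\nu}(\phi_\nu(\bullet_x))=\mu(1\otimes 1)-\lambda(\phi_\nu(\bullet_x)\otimes 1+1\otimes\phi_\nu(\bullet_x))$ and that $\phi_\nu$ intertwines the grafting operators with the shifted cocycle condition—both of which are finite checks using $\phi_\nu(\bullet_x)=\bullet_x+\nu 1$, $\Delta(1)=-\lambda(1\otimes 1)$, and the two cocycle identities Eq.~(\mref{eq:cdbp}).
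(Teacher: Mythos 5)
There is a genuine gap in your base case for (a), and it matters. From multiplicativity and the defining relation $\phi_\nu(\tdun{$x$})=\tdun{$x$}+\nu 1$ one gets $\phi_\nu(\bullet_{x_1}\cdots\bullet_{x_m})=(\bullet_{x_1}+\nu 1)\cdots(\bullet_{x_m}+\nu 1)$, which for $\nu\neq 0$ is not $\bullet_{x_1}\cdots\bullet_{x_m}$: it contains all $2^m$ terms $\nu^{m-|I|}F_{\mid I}$ with $I\subseteq V(F)$ arbitrary. So your assertion that the constraint $V_X(F)\subseteq I$ forces $\phi_\nu(F)=F$ on depth-zero forests, and that this is ``consistent with the defining relation,'' is false --- it directly contradicts that relation, and your induction cannot close (already for $F=\tdun{$x$}\,G$ multiplicativity produces the cross term $\nu\,\phi_\nu(G)$, in which the $X$-decorated vertex has been deleted). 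What you have actually run into is an inconsistency in the printed statement: the formula in (a) holds, and is what the paper's own proof establishes, \emph{without} the constraint, i.e.\ $\phi_\nu(F)=\sum_{I\subseteq V(F)}\nu^{|V(F)\setminus I|}F_{\mid I}$. Indeed the paper's proof sums over all $I\subseteq V(F)$ in both the concatenation case and the $F=B^+_\omega(G)$ case, and reads the one-vertex case $F=\tdun{$x$}$ as the ``obvious'' instance $\tdun{$x$}+\nu 1$, so $I=\emptyset$ is allowed; note also that no constraint is needed for well-definedness, since $F_{\mid I}\in\rfs$ for every $I$ (deleting vertices never turns an $X$-decorated leaf into an internal vertex). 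Once the constraint is dropped, your induction (root-in/root-out split via Remark~\mref{rk:cut} for the tree case, factorization of subsets over concatenation for breadth $\geq 2$) is exactly the paper's argument, the only cosmetic difference being that the paper inducts on $n_F$ rather than on depth and breadth. The right move was to flag and correct the formula rather than to assert a consistency that fails.

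Parts (b) and (c) of your plan match the paper, and neither depends on (a), so the error above does not propagate. For (b) the paper does precisely your ``cleaner'' option: it checks that $\psi=\phi_\nu\circ\phi_{\nu'}$ satisfies the two defining properties of $\phi_{\nu+\nu'}$ and concludes by uniqueness, then observes $\phi_0=\id_{\hrts}$ to obtain the inverse $\phi_{-\nu}$. For (c) the paper takes your second, ``more robust'' route; to make it precise, one sets $L_\omega:=B^+_\omega+\nu\,\id_{\hrts}$, verifies by a short computation from Eq.~(\mref{eq:cdbp}) that $\Delta_{\lambda,\mu-\lambda\nu}\circ L_\omega=-\lambda L_\omega\otimes 1+\mu\,\id\otimes 1+(\id\otimes L_\omega)\circ\Delta_{\lambda,\mu-\lambda\nu}$, so that the target $\bigl(\hrts,m,1,\Delta_{\lambda,\mu-\lambda\nu},\{L_\omega\mid\omega\in\Omega\}\bigr)$ is an $\Omega$-cocycle $\epsilon$-unitary bialgebra of weight $\lambda$ for the pair weight $(\lambda,\mu)$, and checks the generator condition $\Delta_{\lambda,\mu-\lambda\nu}(\tdun{$x$}+\nu 1)=\mu\,1\otimes 1-\lambda\bigl((\tdun{$x$}+\nu 1)\otimes 1+1\otimes(\tdun{$x$}+\nu 1)\bigr)$ --- exactly the finite checks you anticipate. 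Theorem~\mref{thm:propm} then yields a unique morphism of operated bialgebras, which coincides with $\phi_\nu$ because it satisfies $\phi\circ B^+_\omega=L_\omega\circ\phi$ and $\phi(\tdun{$x$})=\tdun{$x$}+\nu 1$, the defining properties of $\phi_\nu$; bijectivity from (b) upgrades it to an isomorphism. Your first suggestion for (c), the direct expansion via Theorem~\mref{thm:comb} and part (a), would plausibly work but is left as an uncompleted sketch and is not the paper's argument.
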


\begin{proof}
	(a). By induction on $n_F=|V(F)|$. If $F=1$, then $\phi_\nu(F)=1$ and the result is obvious. If $n_F>0$, two cases are possible.
	
	\textbf{Case 1.} If $\bre(F)>1$, we put $F=F_1F_2$, with $n_{F_1},n_{F_2}<n_F$ if $i=1,2$. We apply the induction hypothesis on $F_1$ and $F_2$.
	\begin{align*}
		\phi_\nu(F)&=\phi_\nu(F_1)\phi_\nu(F_2)\\
		&=\sum_{\substack{I_1 \subseteq V(F_1),\\ I_2 \subseteq V(F_2)}}\nu^{|V(F_1)\setminus I_1|+|V(F_2)\setminus I_2|}
		{F_1}_{\mid I_1}{F_2}_{\mid I_2}\\
		&=\sum_{\substack{I_1 \subseteq V(F_1),\\ I_2 \subseteq V(F_2)}} \nu^{|V(F)\setminus (I_1 \cup I_2)|}F_{\mid I_1\cup I_2}\\
		&=\sum_{I\subseteq V(F)} \nu^{|V(F)\setminus I|} F_{\mid I}.
	\end{align*}
	\textbf{Case 1.} If $F$ is a tree, two subcases can occur. If $F=\tdun{$x$}$, with $x\in X$, the result is obvious. Otherwise, we put $F=B^+_\omega(G)$ and denote by $r$ the root of $F$. Applying the induction hypothesis on $G$,
	\begin{align*}
		\phi_\nu(F)&=\phi_\nu \circ B^+_\omega(G)\\
		&=(B^+_\omega+\nu \id_{\hrts})\circ \phi_\nu(G)\\
		&=\sum_{I\subseteq V(G)} \nu^{|V(G)\setminus I|} B^+_\omega(G_{\mid I})+
		\sum_{I\subseteq V(G)} \nu^{|V(G)\setminus I|+1} (G_{\mid I})\\
		&=\sum_{I\subseteq V(F),\: r\in I} \nu^{|V(F)\setminus I|} F_{\mid I}+
		\sum_{I\subseteq V(F),\: r\notin I} \nu^{|V(F)\setminus I|} (F_{\mid I})\\
		&=\sum_{I\subseteq V(F)} \nu^{|V(F)\setminus I|} F_{\mid I}.
	\end{align*}
	
	(b). Let us consider the algebra morphism $\psi=\phi_\nu\circ \phi_{\nu'}$. For any $x\in X$,
	\[\psi(\tdun{$x$})=\phi_\nu(\tdun{$x$}+\nu'1)=\tdun{$x$}+\nu 1+\nu'1=\tdun{$x$}+(\nu+\nu')1.\]
	For any $\omega \in \Omega$,
	\begin{align*}
		\psi\circ B^+_\omega&=\phi_\nu \circ (B^+_\omega+\nu'\id_{\hrts})\circ \phi_{\nu'}\\
		&=(B^+_\omega+\nu \id_{\hrts})\circ \phi_\nu\circ \phi_{\nu'}+\nu' \phi_\nu\circ \phi_{\nu'}\\
		&=B^+_\omega \circ \psi+(\nu+\nu')\psi\\
		&=(B^+_\omega+(\nu+\nu')\id_{\hrts})\circ \psi.
	\end{align*}
	So $\psi=\phi_{\nu+\nu'}$. In particular, $\phi_0\circ B^+_\omega=B^+_\omega\circ \phi_0$ for any $\omega\in \Omega$,
	so $\phi_0=\id_{\hrts}$. This implies that for any $\nu\in \bfk$, $\phi_\nu$ is an automorphism,
	of inverse $\phi_{-\nu}$.\\
	
	(c). Let us fix $\nu\in \bfk$ and consider the map $L_\omega=B^+_\omega+\nu \id_{\hrts}$.
	Then, for any $x\in \hrts$, for any $\omega \in \Omega$,
	\begin{align*}
		\Delta_{\lambda,\mu-\lambda\nu}\circ L_\omega(x)&=\Delta_{\lambda,\mu-\lambda\nu}\circ B^+_\omega(x)+\nu \Delta_{\lambda,\mu-\lambda\nu}(x)\\
		&=-\lambda B^+_\omega(x)\otimes 1+(\mu-\lambda\nu) x\otimes 1+(\id_{\hrts} \otimes B^+_\omega)\circ \Delta_{\lambda,\mu-\lambda\nu}(x)+\nu \Delta_{\lambda,\mu-\lambda\nu}(x)\\
		&=-\lambda L_\omega(x)\otimes 1+\mu x\otimes 1+(\id_{\hrts} \otimes L_\omega)\circ \Delta_{\lambda,\mu-\lambda\nu}(x).
	\end{align*}
	For any $x\in X$,
	\begin{align*}
		\Delta_{{\lambda,\mu-\lambda\nu}}(\tdun{$x$}+\nu 1)&=(\mu-\lambda\nu)1\otimes 1-\lambda(\tdun{$x$}\otimes 1+1\otimes \tdun{$x$})-\lambda\nu 1\otimes 1\\
		&=(\mu-2\lambda\nu)1\otimes 1-\lambda(\tdun{$x$}\otimes 1+1\otimes \tdun{$x$})\\
		&=\mu 1\otimes 1-\lambda\left((\tdun{$x$}+\nu 1)\otimes 1+1\otimes (\tdun{$x$}+\nu 1)\right).
	\end{align*}
	The universal property of $(\hrts,m,\Delta_{\lambda\mu},\{B_\omega^+\mid \omega \in\Omega\})$ insures that $\phi_\nu$
	is a bialgebra morphism from $(\hrts,m,\Delta_{\lambda,\mu})$ to $(\hrts,m,\Delta_{\lambda,\mu-\lambda\nu})$.
\end{proof}

\begin{exam}
	Let $\alpha,\beta,\gamma\in \Omega$.
	\begin{align*}
		\phi_\nu(\tdun{$\alpha$})&=\tdun{$\alpha$}+\nu 1,\\
		\phi_\nu(\tddeux{$\alpha$}{$\beta$})&=\tddeux{$\alpha$}{$\beta$}+\nu(\tdun{$\alpha$}+\tdun{$\beta$})+\nu^21,\\
		\phi_\nu(\tdtroisun{$\alpha$}{$\gamma$}{$\beta$})&=\tdtroisun{$\alpha$}{$\gamma$}{$\beta$}
		+\nu(\tddeux{$\alpha$}{$\beta$}+\tddeux{$\alpha$}{$\gamma$}+\tdun{$\beta$}\tdun{$\gamma$})
		+\nu^2(\tdun{$\alpha$}+\tdun{$\beta$}+\tdun{$\gamma$})+\nu^31,\\
		\phi_\nu(\tdtroisdeux{$\alpha$}{$\beta$}{$\gamma$})&=\tdtroisdeux{$\alpha$}{$\beta$}{$\gamma$}
		+\nu(\tddeux{$\alpha$}{$\beta$}+\tddeux{$\alpha$}{$\gamma$}+\tddeux{$\beta$}{$\gamma$})
		+\nu^2(\tdun{$\alpha$}+\tdun{$\beta$}+\tdun{$\gamma$})+\nu^31.
	\end{align*}
\end{exam}

This family of isomorphisms do not content any morphism between $\Delta_{0,\nu}$ and $\Delta_{0,\nu'}$ for distinct $\nu$ and $\nu'$.
Here is another family with such morphisms.

\begin{prop}
	For any $\nu \in \bfk$, we define an endomorphism $\theta_\nu$ of $\hrts$ by
	\[\theta_\nu(F)=\nu^{n_F}F.\]
	Then $\theta_\nu$ is a bialgebra morphism from $(\hrts,m,\Delta_{\lambda,\mu\nu})$ to $(\hrts,m,\Delta_{\lambda,\mu})$.
\end{prop}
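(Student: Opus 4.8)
The plan is to check separately that $\theta_\nu$ is an algebra morphism and a coalgebra morphism from $(\hrts,m,\Delta_{\lambda,\mu\nu})$ to $(\hrts,m,\Delta_{\lambda,\mu})$. The algebra part is immediate: since the number of vertices is additive under concatenation, $n_{FG}=n_F+n_G$ for $F,G\in\rfs$, one gets $\theta_\nu(FG)=\nu^{n_F+n_G}FG=(\nu^{n_F}F)(\nu^{n_G}G)=\theta_\nu(F)\theta_\nu(G)$, and $\theta_\nu(\etree)=\etree$ since $\etree$ has no vertices, so $\theta_\nu$ is a unitary algebra endomorphism.

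For the coalgebra part, the key tool will be the combinatorial formula of Theorem~\ref{thm:comb}. Fixing $F\in\rfs$ with vertices ordered by $\lhl$ as $u_{n_F}\lhl\cdots\lhl u_1$, and writing $I_k=\{u_1,\ldots,u_k\}$, $J_k=\{u_{k+1},\ldots,u_{n_F}\}$ as in that theorem, the decisive observation is a degree count: $F_{\mid I_k}$ has exactly $k$ vertices and $F_{\mid J_k}$ has exactly $n_F-k$ vertices. Hence $\theta_\nu\otimes\theta_\nu$ multiplies each tensor $F_{\mid I_k}\otimes F_{\mid J_k}$ in the $\lambda$-summand by $\nu^k\nu^{n_F-k}=\nu^{n_F}$, and each tensor $F_{\mid I_{k-1}}\otimes F_{\mid J_k}$ in the $\mu$-summand by $\nu^{k-1}\nu^{n_F-k}=\nu^{n_F-1}$.

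Substituting into the expansion of $\Delta_{\lambda,\mu\nu}(F)$ then gives
\begin{align*}
	(\theta_\nu\otimes\theta_\nu)\Delta_{\lambda,\mu\nu}(F)
	&=-\lambda\,\nu^{n_F}\sum_{k=0}^{n_F}F_{\mid I_k}\otimes F_{\mid J_k}
	+(\mu\nu)\,\nu^{n_F-1}\sum_{k=1}^{n_F}F_{\mid I_{k-1}}\otimes F_{\mid J_k}\\
	&=\nu^{n_F}\left(-\lambda\sum_{k=0}^{n_F}F_{\mid I_k}\otimes F_{\mid J_k}
	+\mu\sum_{k=1}^{n_F}F_{\mid I_{k-1}}\otimes F_{\mid J_k}\right)
	=\nu^{n_F}\Delta_{\lambda,\mu}(F),
\end{align*}
the extra factor $\nu$ supplied by the twisted parameter $\mu\nu$ exactly absorbing the deficit $\nu^{n_F-1}$ of the $\mu$-summand. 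Since $\Delta_{\lambda,\mu}\circ\theta_\nu(F)=\nu^{n_F}\Delta_{\lambda,\mu}(F)$ by definition of $\theta_\nu$ and linearity of $\Delta_{\lambda,\mu}$, the two expressions coincide, so $\theta_\nu$ intertwines the two coproducts and is the desired bialgebra morphism. I do not expect a genuine obstacle here: the whole content is the vertex-count bookkeeping, and the only conceptual point — the reason the second parameter is rescaled while the first is kept fixed — is exactly the degree mismatch in Theorem~\ref{thm:comb}, where the $\lambda$-term preserves the total vertex count ($n_F$) whereas the $\mu$-term loses one vertex (total $n_F-1$).
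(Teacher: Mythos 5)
Your proof is correct and follows essentially the same route as the paper: both rely on the combinatorial formula of Theorem~\ref{thm:comb} together with the vertex counts $n_{F\mid I_k}+n_{F\mid J_k}=n_F$ for the $\lambda$-summand and $n_{F\mid I_{k-1}}+n_{F\mid J_k}=n_F-1$ for the $\mu$-summand, so that the extra $\nu$ in the twisted parameter $\mu\nu$ absorbs the deficit. The only differences are cosmetic: you compute $(\theta_\nu\otimes\theta_\nu)\Delta_{\lambda,\mu\nu}$ while the paper rewrites $\Delta_{\lambda,\mu}\circ\theta_\nu$, and you spell out the algebra-morphism check that the paper dismisses as obvious.
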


\begin{proof}
	Obviously, $\theta_\mu$ is an algebra morphism. Let $F\in \rfs$.
	\begin{align*}
		\Delta_{\lambda,\mu}\circ \theta_\nu(F)=&\ -\lambda\left(\sum_{k=0}^{n_F} \nu^{n_F}F_{\mid I_k}\otimes F_{\mid J_k}\right)
		+\mu \left(\sum_{k=1}^{n_F}\nu^{n_F} F_{\mid I_{k-1}}\otimes F_{\mid J_k}\right)\\
		=&\ -\lambda\left(\sum_{k=0}^{n_F} \nu^{n_{F_{\mid I_k}}}F_{\mid I_k}\otimes \nu^{n_{F_{\mid J_k}}} F_{\mid J_k}\right)\\
		&\ +\mu\nu \left(\sum_{k=1}^{n_F}\nu^{n_{F_{\mid I_{k-1}}}} F_{\mid I_{k-1}}\otimes \nu^{n_{F_{\mid J_k}}}F_{\mid J_k}\right)\\
		=&\ (\theta_\nu \otimes \theta_\nu)\circ \Delta_{\lambda,\mu\nu}(F).\qedhere
	\end{align*}
\end{proof}

\begin{remark}
	In particular, if $\bfk$ is a field, we obtain that:
	\begin{itemize}
		\item If $\lambda \neq 0$, then for all $\mu,\mu'\in \bfk$,
		$(\hrts,m,\Delta_{\lambda,\mu})$ and $(\hrts,m,\Delta_{\lambda,\mu'})$ are isomorphic,through $\phi_{\frac{\mu-\mu'}{\lambda}}$.
		\item For all $\mu,\mu'\in \bfk\setminus \{0\}$,
		$(\hrts,m,\Delta_{0,\mu})$ and $(\hrts,m,\Delta_{0,\mu'})$ are isomorphic, through $\theta_{\frac{\mu}{\mu'}}$.
	\end{itemize}
	If $\lambda\neq \lambda'$, then $(\hrts,m,\Delta_{\lambda,\nu})$ and $(\hrts,m,\Delta_{\lambda',\nu'})$ are not isomorphic,
	as they are $\epsilon$-bialgebra of different weights.  Moreover, $(\hrts,m,\Delta_{0,0})$ is not isomorphic to any other
	$(\hrts,m,\Delta_{\lambda,\nu})$, as it is the only one with a zero coproduct.
\end{remark}

\section{Pre-Lie algebras of decorated planar rooted forests}\label{sec:preLie}
In this section, we recall the connection from weighted $\epsilon$-bialgebras to pre-Lie algebras.
Using Theorem~\mref{thm:rt2}, we then construct a new pre-Lie algebraic structure on decorated planar rooted forests.

\subsection{Pre-Lie algebras and infinitesimal unitary bialgebras}
In this subsection, we first recall the concept of the pre-Lie algebras and show the connection from weighted $\epsilon$-bialgebras to pre-Lie algebras.

\begin{defn}~\cite{Man11}
	A {\bf (left) pre-Lie algebra} is a $\bfk $-module $A$ together with a binary operation $\rhd: A\ot A \rightarrow A$ satisfying the left pre-Lie identity:
	\begin{align}
		(a\rhd b)\rhd c-a\rhd (b\rhd c)=(b\rhd a)\rhd c-b\rhd (a\rhd c)\, \text{ for }\, a,b,c\in A.
		\mlabel{eq:lpre}
	\end{align}
\end{defn}

Let $(A, \rhd)$ be a pre-Lie algebra and $\mathbf{gl}(A)$ the space of all linear maps $A\rightarrow A$. For any $a,b\in A$, let
\[L_a:\left\{\begin{array}{rcl}
	A&\longrightarrow&A\\
	b&\longmapsto&a \rhd b
\end{array}\right.\]
be the left multiplication operator. Let $L$ be the linear map defined by
\[L:\left\{\begin{array}{rcl}
	A&\longrightarrow& \mathbf{gl}(A)\\
	a&\longmapsto&L_a.
\end{array}\right.\]
The close relation between pre-Lie algebras and Lie algebras is characterized by the following result.

\begin{lemma}\label{lem:preL}
	\begin{enumerate}
		\item
		\cite[Theorem~1]{Ger63}
		Let $(A, \rhd)$ be a pre-Lie algebra. Define for elements in $A$ a new bilinear operation by setting
		\begin{align*}
			[a, b] :=a\rhd b-b\rhd a\,\text{ for }\, a,b\in A.
		\end{align*}
		Then $(A, [_{-}, _{-}])$ is a Lie algebra.
		\mlabel{lem:preL1}
		\item \cite[Proposition~1.2]{Bai}
		Eq.~(\mref{eq:lpre}) rewrites as
		\begin{align*}
			L_{[a,b]}=[L_a, L_b] = L_a\circ L_b -  L_b\circ L_a,
		\end{align*}
		which implies that $L: (A, [_{-}, _{-}]) \rightarrow \mathbf{gl}(A)$ with $a\mapsto L_a$ gives a representation of the Lie algebra $(A, [_{-}, _{-}])$.
		\mlabel{lem:preL2}
	\end{enumerate}
	
\end{lemma}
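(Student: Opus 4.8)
The plan is to isolate the single structural fact encoded by Eq.~(\mref{eq:lpre}) and then apply it mechanically to both parts. I would introduce the associator
\[
\alpha(a,b,c) := (a\rhd b)\rhd c - a\rhd(b\rhd c), \qquad a,b,c\in A,
\]
so that the left pre-Lie identity says precisely that $\alpha$ is symmetric in its first two arguments, $\alpha(a,b,c)=\alpha(b,a,c)$. Everything below is then a bookkeeping consequence of this one symmetry.

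For part~(\mref{lem:preL1}), bilinearity of $[_{-}, _{-}]$ is inherited from that of $\rhd$, and antisymmetry is immediate since $[a,b]=a\rhd b-b\rhd a=-(b\rhd a-a\rhd b)=-[b,a]$. It remains to verify the Jacobi identity. I would expand the cyclic sum $[[a,b],c]+[[b,c],a]+[[c,a],b]$ directly using $[x,y]=x\rhd y-y\rhd x$, producing twelve terms: six of ``right-nested'' type $(x\rhd y)\rhd z$ and six of ``left-nested'' type $x\rhd(y\rhd z)$. Pairing each right-nested term with the matching left-nested one assembles the sum into the six associators $\alpha(a,b,c)$, $-\alpha(b,a,c)$, $\alpha(b,c,a)$, $-\alpha(c,b,a)$, $\alpha(c,a,b)$, $-\alpha(a,c,b)$. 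The symmetry $\alpha(x,y,z)=\alpha(y,x,z)$ then cancels these in the three pairs $\alpha(a,b,c)-\alpha(b,a,c)$, $\alpha(b,c,a)-\alpha(c,b,a)$, $\alpha(c,a,b)-\alpha(a,c,b)$, each vanishing, so the total is zero.

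For part~(\mref{lem:preL2}), I would simply evaluate both sides on an arbitrary $x\in A$. Since $L_a(x)=a\rhd x$, one has $[L_a,L_b](x)=a\rhd(b\rhd x)-b\rhd(a\rhd x)$, while $L_{[a,b]}(x)=(a\rhd b)\rhd x-(b\rhd a)\rhd x$; their equality is exactly $\alpha(a,b,x)=\alpha(b,a,x)$, which is Eq.~(\mref{eq:lpre}). Thus $L_{[a,b]}=[L_a,L_b]=L_a\circ L_b-L_b\circ L_a$, which is precisely the assertion that $L$ is a Lie algebra homomorphism from $(A,[_{-}, _{-}])$ into $(\mathbf{gl}(A),[_{-}, _{-}])$ equipped with the commutator bracket; combined with part~(\mref{lem:preL1}) this yields the desired representation. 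I expect no genuine obstacle beyond careful sign- and index-tracking in the twelve-term expansion of the Jacobi sum; once the associator symmetry is isolated both parts are forced, and part~(\mref{lem:preL2}) is essentially a one-line reformulation of the hypothesis.
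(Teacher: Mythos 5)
Your proof is correct: the associator symmetry $\alpha(a,b,c)=\alpha(b,a,c)$ is indeed exactly the left pre-Lie identity, your twelve-term expansion of the Jacobi sum does assemble into the three cancelling pairs $\alpha(a,b,c)-\alpha(b,a,c)$, $\alpha(b,c,a)-\alpha(c,b,a)$, $\alpha(c,a,b)-\alpha(a,c,b)$, and your evaluation of $L_{[a,b]}$ and $[L_a,L_b]$ on an arbitrary $x$ is precisely Eq.~(\mref{eq:lpre}) with $c=x$. The paper itself gives no proof of this lemma, citing \cite{Ger63} and \cite{Bai} instead, and your argument is exactly the classical one underlying those references, so nothing further is needed.
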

By Lemma~\mref{lem:preL}, a pre-Lie algebra induces a Lie algebra whose left multiplication operators give a representation of the associated commutator Lie algebra.

Let $(A, m, \Delta)$ be an $\epsilon$-bialgebra of weight $\lambda$. Define
\begin{align}
	\rhd: A\ot A \to A, \, a\ot b \mapsto a\rhd b:=\sum_{(b)}b^{(1)}a b^{(2)},
	\mlabel{eq:preope}
\end{align}
where $b^{(1)}, b^{(2)}$ are from Sweedler notation $\displaystyle \Delta (b)=\sum_{(b)}b^{(1)}\ot b^{(2)}$. The following result captures the connection from weighted $\epsilon$-bialgebras to pre-Lie algebras, which was studied in~\mcite{CLPZ}. For the completeness, we post it here.

\begin{theorem}
	Let $(A, m, \Delta)$ be an $\epsilon$-bialgebra of weight $\lambda$.
	Then $A$ equipped with the product $\rhd$ defined by Eq.~(\mref{eq:preope}) is a pre-Lie algebra.
	\mlabel{thm:preL}
\end{theorem}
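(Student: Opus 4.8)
The plan is to verify the left pre-Lie identity~(\mref{eq:lpre}) directly from the definition~(\mref{eq:preope}) of $\rhd$, by showing that the associator
\[
\alpha(a,b,c):=(a\rhd b)\rhd c-a\rhd(b\rhd c)
\]
is symmetric in its first two arguments. Indeed, the left-hand side of Eq.~(\mref{eq:lpre}) is $\alpha(a,b,c)$ and the right-hand side is $\alpha(b,a,c)$, so the identity is precisely $\alpha(a,b,c)=\alpha(b,a,c)$. First I would expand the ``outer-first'' term, which needs nothing beyond the definition: writing $a\rhd b=\sum_{(b)}b^{(1)}ab^{(2)}$ and applying $\rhd$ against $c$ gives
\[
(a\rhd b)\rhd c=\sum_{(b),(c)}c^{(1)}b^{(1)}a\,b^{(2)}c^{(2)}.
\]

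The substantive step is the ``inner-first'' term $a\rhd(b\rhd c)=a\rhd\bigl(\sum_{(c)}c^{(1)}bc^{(2)}\bigr)$, which requires the coproduct of the triple product $c^{(1)}\,b\,c^{(2)}$. Here I would first record the three-fold form of the weighted derivation rule~(\mref{eq:cocycle}), obtained by applying it twice:
\[
\Delta(xyz)=\Delta(x)\cdot yz+x\cdot\Delta(y)\cdot z+xy\cdot\Delta(z)+\lambda\,(x\ot yz)+\lambda\,(xy\ot z).
\]
Applying this with $x=c^{(1)}$, $y=b$, $z=c^{(2)}$, summing over $(c)$, and using coassociativity to collapse the nested coproducts of $c$ into $\sum_{(c)}c^{(1)}\ot c^{(2)}\ot c^{(3)}$, then inserting $a$ between the two tensor legs as dictated by Eq.~(\mref{eq:preope}), I expect to obtain
\begin{align*}
a\rhd(b\rhd c)=&\ \sum_{(b),(c)}c^{(1)}b^{(1)}a\,b^{(2)}c^{(2)}
+\sum_{(c)}c^{(1)}a\,c^{(2)}b\,c^{(3)}+\sum_{(c)}c^{(1)}b\,c^{(2)}a\,c^{(3)}\\
&\ +\lambda\sum_{(c)}c^{(1)}ab\,c^{(2)}+\lambda\sum_{(c)}c^{(1)}ba\,c^{(2)}.
\end{align*}

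Subtracting, the first summand cancels exactly against $(a\rhd b)\rhd c$, leaving
\begin{align*}
\alpha(a,b,c)=&-\sum_{(c)}c^{(1)}a\,c^{(2)}b\,c^{(3)}-\sum_{(c)}c^{(1)}b\,c^{(2)}a\,c^{(3)}\\
&-\lambda\sum_{(c)}c^{(1)}ab\,c^{(2)}-\lambda\sum_{(c)}c^{(1)}ba\,c^{(2)}.
\end{align*}
Every surviving term is visibly invariant under the swap $a\leftrightarrow b$: the two cubic sums exchange with one another, and the two $\lambda$-sums exchange with one another. Hence $\alpha(a,b,c)=\alpha(b,a,c)$, which is the pre-Lie identity. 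The main obstacle is purely organizational, namely computing $\Delta(c^{(1)}bc^{(2)})$ and applying coassociativity to reindex the nested Sweedler components correctly; once that bookkeeping is done, both the cancellation of the $b^{(1)}ab^{(2)}$ term and the manifest $a\leftrightarrow b$ symmetry of what remains are automatic, and the weight $\lambda$ requires no special treatment.
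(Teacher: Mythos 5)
Your proof is correct. Note that the paper itself gives no argument for Theorem~\mref{thm:preL}: it imports the result from~\mcite{CLPZ} (the weight-$\lambda$ generalization of Aguiar's construction), so there is no in-paper proof to diverge from. Your direct verification is exactly the standard one: the expansion of $a\rhd(b\rhd c)$ via the iterated weighted derivation rule $\Delta(xyz)=\Delta(x)\cdot yz+x\cdot\Delta(y)\cdot z+xy\cdot\Delta(z)+\lambda(x\ot yz)+\lambda(xy\ot z)$ is right (and consistent with both bracketings, as it must be), the term $\sum_{(b),(c)}c^{(1)}b^{(1)}a\,b^{(2)}c^{(2)}$ cancels against $(a\rhd b)\rhd c$, and the surviving associator
\begin{align*}
\alpha(a,b,c)=-\sum_{(c)}c^{(1)}a\,c^{(2)}b\,c^{(3)}-\sum_{(c)}c^{(1)}b\,c^{(2)}a\,c^{(3)}
-\lambda\sum_{(c)}c^{(1)}ab\,c^{(2)}-\lambda\sum_{(c)}c^{(1)}ba\,c^{(2)}
\end{align*}
is manifestly symmetric under $a\leftrightarrow b$, with the two cubic sums and the two $\lambda$-sums exchanging in pairs; the only point of care, which you handle, is using coassociativity to identify $\sum_{(c)}\Delta(c^{(1)})\ot c^{(2)}$ and $\sum_{(c)}c^{(1)}\ot\Delta(c^{(2)})$ with the three-fold Sweedler notation.
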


\subsection{Pre-Lie algebras on decorated planar rooted forests}
In this subsection, as an application of Theorem~\ref{thm:preL}, we equip  $\hrts$ with a pre-Lie algebraic structure $(\hrts, \rhd_{\RT})$
and a Lie algebraic structure $(\hrts, [_{-}, _{-}]_{\RT})$.  We also give combinatorial descriptions of $\rhd_{\RT}$ and $[_{-}, _{-}]_{\RT}$, respectively.

\begin{theorem}\label{thm:preope}
	Let $\hrts=(\hrts,m,\col)$ be the $\epsilon$-unitary bialgebra of weight $\lambda$ in Theorem~\mref{thm:rt2}.
	\begin{enumerate}
		\item The pair $(\hrts, \rhd_{\lambda,\mu})$ is a pre-Lie algebra, where
		\begin{equation*}
			F_1\rhd_{\lambda,\mu} F_2: =\sum_{(F_2)}F_{2}^{(1)}F_1 F_{2}^{(2)} \,\text{ for } \, F_1, F_2\in \hrts.
			%\mlabel{eq:preope2}
		\end{equation*}
		
		\item The pair $(\hrts, [_{-}, _{-}]_{\lambda,\mu})$ is a Lie algebra, where
		\begin{align*}
			[F_1, F_2]_{\lambda,\mu}:=F_1\rhd_{\lambda,\mu} F_2-F_2\rhd_{\lambda,\mu} F_1 \,\text{ for } \, F_1, F_2\in \hrts.
			%\mlabel{eq:lieb}
		\end{align*}
	\end{enumerate}
	
\end{theorem}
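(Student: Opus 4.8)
The plan is to deduce both statements directly from the general correspondence between weighted $\epsilon$-bialgebras and pre-Lie algebras established earlier, rather than to verify the pre-Lie and Jacobi identities by hand. The key observation is that the operation $\rhd_{\lambda,\mu}$ is nothing but the specialization of the abstract product in Eq.~(\mref{eq:preope}) to the concrete $\epsilon$-bialgebra furnished by Theorem~\mref{thm:rt2}.

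For part (a), I would first recall that Theorem~\mref{thm:rt2} asserts that $(\hrts, \conc, \etree, \col)$ is an $\epsilon$-unitary bialgebra of weight $\lambda$; forgetting the unit and counit, $(\hrts, \conc, \col)$ is in particular an $\epsilon$-bialgebra of weight $\lambda$. Writing $\col(F_2)=\sum_{(F_2)}F_2^{(1)}\ot F_2^{(2)}$ in Sweedler notation and taking $m=\conc$ in Eq.~(\mref{eq:preope}), the induced product $\rhd$ reads
\begin{align*}
	F_1 \rhd F_2 = \sum_{(F_2)} F_2^{(1)} F_1 F_2^{(2)},
\end{align*}
which is exactly $F_1 \rhd_{\lambda,\mu} F_2$. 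Hence Theorem~\mref{thm:preL}, applied to the $\epsilon$-bialgebra $(\hrts, \conc, \col)$, yields at once that $(\hrts, \rhd_{\lambda,\mu})$ satisfies the left pre-Lie identity Eq.~(\mref{eq:lpre}), proving (a).

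For part (b), the pre-Lie structure from (a) feeds directly into Lemma~\mref{lem:preL}~(\mref{lem:preL1}): for any pre-Lie algebra $(A,\rhd)$ the antisymmetrization $[a,b]:=a\rhd b-b\rhd a$ defines a Lie bracket. Taking $A=\hrts$ and $\rhd=\rhd_{\lambda,\mu}$, the bracket $[F_1,F_2]_{\lambda,\mu}=F_1\rhd_{\lambda,\mu}F_2-F_2\rhd_{\lambda,\mu}F_1$ is therefore automatically a Lie bracket, giving (b).

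There is no genuine analytic obstacle here: the substance of the argument was already absorbed into Theorem~\mref{thm:rt2} (coassociativity of $\col$ together with the weighted derivation rule for $\conc$) and into the abstract Theorem~\mref{thm:preL}. The only point requiring care is bookkeeping, namely confirming that the Sweedler components used in $\rhd_{\lambda,\mu}$ are those of $\col$ and that the ambient product is concatenation, so that the hypotheses of Theorem~\mref{thm:preL} and Lemma~\mref{lem:preL} are met verbatim. Should a self-contained verification be preferred, one would instead expand $(F_1\rhd F_2)\rhd F_3-F_1\rhd(F_2\rhd F_3)$ using coassociativity of $\col$ together with the identity $\col(ab)=a\cdot\col(b)+\col(a)\cdot b+\lambda(a\ot b)$ from Lemma~\mref{lem:colff}, and check that the resulting expression is symmetric in $F_1$ and $F_2$; this is precisely the computation encoded in the proof of Theorem~\mref{thm:preL}.
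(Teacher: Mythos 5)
Your proposal is correct and follows essentially the same route as the paper: the paper's proof likewise obtains part (a) by combining Theorem~\mref{thm:rt2} with Theorem~\mref{thm:preL}, and part (b) by invoking Lemma~\mref{lem:preL}~(\mref{lem:preL1}). Your additional check that $\rhd_{\lambda,\mu}$ is exactly the specialization of Eq.~(\mref{eq:preope}) with $m=\conc$ and Sweedler components taken from $\col$ is the right (if implicit in the paper) bookkeeping, and nothing more is needed.
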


\begin{proof}
	By Theorems~\mref{thm:rt2} and~\mref{thm:preL}, $(\hrts, \rhd_{\lambda,\mu})$ is a pre-Lie algebra.
	The remainder follows from Lemma~\mref{lem:preL}~(\mref{lem:preL1}).
\end{proof}

A combinatorial descriptions of $\rhd_{\lambda,\mu}$ and $[_{-}, _{-}]_{\lambda,\mu}$ on $\hrts$ can also be given.
With the notations in Lemma~\mref{lem:comid1} and Theorem~\mref{thm:comb}, we have

\begin{coro}\label{coro:preLcomb}
	For any $F_1, F_2\in \rfs$,
	\begin{align}
		F_1\rhd_{\lambda,\mu} F_2&=-\lambda \sum_{k=0}^{n_{F_2}} (F_2)_{\mid I_k} F_1 (F_2)_{\mid J_k}
		+\mu \sum_{k=1}^{n_{F_2}} (F_2)_{\mid I_{k-1}} F_1 (F_2)_{\mid J_k},
		\mlabel{eq:precom}
	\end{align}
	and
	\begin{align}
		\label{eq:liecom}
		[F_1, F_2]_{\lambda,\mu}=&-\lambda \sum_{k=0}^{n_{F_2}} (F_2)_{\mid I_k} F_1 (F_2)_{\mid J_k}
		+\mu \sum_{k=1}^{n_{F_2}} (F_2)_{\mid I_{k-1}} F_1 (F_2)_{\mid J_k}\\
		&+\lambda \sum_{k=0}^{n_{F_1}} (F_1)_{\mid I_k} F_2 (F_1)_{\mid J_k}
		-\mu \sum_{k=1}^{n_{F_1}} (F_1)_{\mid I_{k-1}} F_2 (F_1)_{\mid J_k}.
	\end{align}
\end{coro}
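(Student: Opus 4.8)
The plan is to read both identities directly off the combinatorial coproduct formula of Theorem~\ref{thm:comb}, so that no fresh induction is needed. First I would recall from Theorem~\ref{thm:preope}(a) that
\[F_1\rhd_{\lambda,\mu} F_2=\sum_{(F_2)}F_2^{(1)}F_1F_2^{(2)},\]
where the Sweedler components are those of $\col(F_2)=\sum_{(F_2)}F_2^{(1)}\ot F_2^{(2)}$ and the juxtaposition $F_2^{(1)}F_1F_2^{(2)}$ denotes the concatenation product $\conc$. Applying Theorem~\ref{thm:comb} to $F_2$, whose vertices are totally ordered by $\lhl$ and whose forest biideals $I_k$ (with complements $J_k$) are those of Lemma~\ref{lem:comid1}, rewrites $\col(F_2)$ as $-\lambda\sum_{k=0}^{n_{F_2}}(F_2)_{\mid I_k}\ot(F_2)_{\mid J_k}+\mu\sum_{k=1}^{n_{F_2}}(F_2)_{\mid I_{k-1}}\ot(F_2)_{\mid J_k}$.

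The only thing to check is that under the map $\sum_{(F_2)}F_2^{(1)}\ot F_2^{(2)}\mapsto\sum_{(F_2)}F_2^{(1)}F_1F_2^{(2)}$ each summand $(F_2)_{\mid I_k}\ot(F_2)_{\mid J_k}$ is sent to $(F_2)_{\mid I_k}\,F_1\,(F_2)_{\mid J_k}$, which is immediate from the identification $F_2^{(1)}=(F_2)_{\mid I_k}$, $F_2^{(2)}=(F_2)_{\mid J_k}$. Substituting term by term then yields Eq.~(\ref{eq:precom}). For the bracket I would simply antisymmetrize: by definition $[F_1,F_2]_{\lambda,\mu}=F_1\rhd_{\lambda,\mu}F_2-F_2\rhd_{\lambda,\mu}F_1$, so I apply the formula just obtained to each term, taking care that in $F_1\rhd_{\lambda,\mu}F_2$ the indices $I_k,J_k$ run over the biideals of $F_2$, whereas in $F_2\rhd_{\lambda,\mu}F_1$ they run over those of $F_1$. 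Collecting the four resulting sums, with the sign of the second pair flipped because it is subtracted, reproduces Eq.~(\ref{eq:liecom}) exactly.

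Since both formulas arise by pure substitution into results already established---Theorem~\ref{thm:comb} for the coproduct and Theorem~\ref{thm:preope} for the definitions of $\rhd_{\lambda,\mu}$ and $[\cdot,\cdot]_{\lambda,\mu}$---I do not expect any genuine obstacle. The only delicate point is bookkeeping: correctly transcribing the summation ranges, maintaining the identifications of the Sweedler components with the restricted subforests $(F_2)_{\mid I_k}$ and $(F_2)_{\mid J_k}$, and keeping the $F_1$-indexed and $F_2$-indexed families of biideals separate in the bracket computation.
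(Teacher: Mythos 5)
Your proposal is correct and matches the paper's own argument, which derives the corollary directly by substituting the combinatorial coproduct formula of Theorem~\ref{thm:comb} into the definition of $\rhd_{\lambda,\mu}$ in Theorem~\ref{thm:preope} and antisymmetrizing for the bracket. The paper states this in one line; you have merely spelled out the same substitution and bookkeeping explicitly.
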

\begin{proof}
	It follows directly from Theorems~\mref{thm:comb} and~\mref{thm:preope}.
\end{proof}

\begin{exam}\label{exam:preL}
	Let $F_1=\tdun{$x$}$,  $F_2=\tddeux{$\alpha$}{$\beta$}$,  $F_3=\tdun{$\gamma$}\tdun{$y$}$.
	with $\alpha, \beta,  \gamma \in \Omega$ and $x, y\in X$.
	Then $F_1$ has two forest biideals $\emptyset$ , $\{\bullet_x\}$, $F_2$ has three forests biideals $\emptyset$, $\{\bullet_\beta\}$, $\{\bullet_\beta, \bullet_\alpha\}$, and
	$F_3$ has three forests biideals $\emptyset$, $\{\bullet_\gamma\}$ , $\{\bullet_\gamma, \bullet_y\}$.
	By Eqs.~(\mref{eq:precom}) and~(\mref{eq:liecom}),
	\begin{align*}
		F_1\rhd_{\lambda,\mu} F_2=&\ \mu(F_{2|\emptyset}F_1 F_{2|\{\bullet_\alpha\}}+F_{2|\{\bullet_\beta\}}F_1 F_{2|\emptyset})\\
		&-\lambda (F_{2|\emptyset}F_1 F_{2|\{\bullet_\beta,\bullet_\alpha\}}+F_{2|\{\bullet_\beta\}}F_1 F_{2|\{\bullet_\alpha\}}+F_{2|\{\bullet_\beta,\bullet_\alpha\}}F_1 F_{2|\emptyset})\\
		=&\ \mu(\etree F_1 \tdun{$\alpha$}+\tdun{$\beta$} F_1 \etree)-\lambda (\etree F_1 \tddeux{$\alpha$}{$\beta$} +\tdun{$\beta$}F_1 \tdun{$\alpha$}+\tddeux{$\alpha$}{$\beta$} F_1 \etree)\\
		=&\ \mu(\tdun{$x$} \tdun{$\alpha$}+\tdun{$\beta$}\tdun{$x$})-\lambda (\tdun{$x$} \tddeux{$\alpha$}{$\beta$} +\tdun{$\beta$}\tdun{$x$} \tdun{$\alpha$}+\tddeux{$\alpha$}{$\beta$} \tdun{$x$} ),\\
		F_2\rhd_{\lambda,\mu} F_1 =&\ \mu(\tddeux{$\alpha$}{$\beta$})-\lambda (\tdun{$x$} \tddeux{$\alpha$}{$\beta$}+\tddeux{$\alpha$}{$\beta$} \tdun{$x$} ),\\
		[F_1, F_2]_{\lambda,\mu}=&\ \mu(\tdun{$x$} \tdun{$\alpha$}+\tdun{$\beta$}\tdun{$x$}-\tddeux{$\alpha$}{$\beta$})-\lambda(\tdun{$\beta$}\tdun{$x$} \tdun{$\alpha$}),\\
		F_2\rhd_{\lambda,\mu} F_3=&\ \mu(\etree F_2 \tdun{$y$}+\tdun{$\gamma$} F_2 \etree)-\lambda (\etree F_2 \tdun{$\gamma$}\tdun{$y$}+\tdun{$\gamma$}F_2\tdun{$y$}+\tdun{$\gamma$}\tdun{$y$}F_2\etree) \\
		=&\ \mu(\tddeux{$\alpha$}{$\beta$}\tdun{$y$}+\tdun{$\gamma$}\tddeux{$\alpha$}{$\beta$})-\lambda (\tddeux{$\alpha$}{$\beta$} \tdun{$\gamma$}\tdun{$y$}+\tdun{$\gamma$}\tddeux{$\alpha$}{$\beta$}\tdun{$y$}
		+\tdun{$\gamma$}\tdun{$y$}\tddeux{$\alpha$}{$\beta$}).
	\end{align*}
	Moreover, using Theorem~\mref{thm:comb}, we have
	\begin{align*}
		\col(F_3)=&~\col(\tdun{$\gamma$}\tdun{$y$})=\mu(\etree \ot \tdun{$y$} + \tdun{$\gamma$} \ot \etree)-\lambda (\etree \ot \tdun{$\gamma$}\tdun{$y$}+\tdun{$\gamma$}\ot\tdun{$y$}+\tdun{$\gamma$}\tdun{$y$}\ot \etree),\\
		\col(F_2\rhd_{\lambda,\mu} F_3)=&~\col(\mu(\tddeux{$\alpha$}{$\beta$}\tdun{$y$}+\tdun{$\gamma$}\tddeux{$\alpha$}{$\beta$})-\lambda (\tddeux{$\alpha$}{$\beta$} \tdun{$\gamma$}\tdun{$y$}+\tdun{$\gamma$}\tddeux{$\alpha$}{$\beta$}\tdun{$y$}
		+\tdun{$\gamma$}\tdun{$y$}\tddeux{$\alpha$}{$\beta$}))\\
		=&~\mu^2(\etree \ot \tdun{$\alpha$}\tdun{$y$}+\tdun{$\beta$}\ot \tdun{$y$}+\tddeux{$\alpha$}{$\beta$} \ot \etree+\etree \ot \tddeux{$\alpha$}{$\beta$}+\tdun{$\gamma$}\ot \tdun{$\alpha$}+\tdun{$\gamma$}\tdun{$\beta$} \ot \etree)\\&
		-\mu\lambda(\etree \ot \tddeux{$\alpha$}{$\beta$}\tdun{$y$}+\tdun{$\beta$} \ot\tdun{$\alpha$}\tdun{$y$}+ \tddeux{$\alpha$}{$\beta$} \ot \tdun{$y$}+\tddeux{$\alpha$}{$\beta$}\tdun{$y$}\ot \etree+\etree \ot \tdun{$\gamma$}\tddeux{$\alpha$}{$\beta$}\\
		&+\tdun{$\gamma$} \ot \tddeux{$\alpha$}{$\beta$}+\tdun{$\gamma$}\tdun{$\beta$} \ot \tdun{$\alpha$}+\tdun{$\gamma$}\tddeux{$\alpha$}{$\beta$} \ot \etree+\etree \ot  \tdun{$\alpha$}\tdun{$\gamma$}\tdun{$y$}+\tdun{$\beta$} \ot \tdun{$\gamma$}\tdun{$y$} \\
		&+ \tddeux{$\alpha$}{$\beta$} \ot \tdun{$y$}+\tddeux{$\alpha$}{$\beta$}\tdun{$\gamma$}\ot \etree+\etree \ot \tddeux{$\alpha$}{$\beta$}\tdun{$y$}+\tdun{$\gamma$} \ot\tdun{$\alpha$}\tdun{$y$}+\tdun{$\gamma$}\tdun{$\beta$} \ot \tdun{$y$}\\
		&+\tdun{$\gamma$}\tddeux{$\alpha$}{$\beta$}\ot \etree+\etree \ot \tdun{$y$}\tddeux{$\alpha$}{$\beta$}+\tdun{$\gamma$} \ot \tddeux{$\alpha$}{$\beta$}+\tdun{$\gamma$}\tdun{$y$} \ot \tdun{$\alpha$}+\tdun{$\gamma$}\tdun{$y$}\tdun{$\beta$} \ot \etree)\\
		&+\lambda^2(\etree\ot \tddeux{$\alpha$}{$\beta$}\tdun{$\gamma$}\tdun{$y$}
		+\tdun{$\beta$}\ot \tdun{$\alpha$}\tdun{$\gamma$}\tdun{$y$}
		+\tddeux{$\alpha$}{$\beta$}\ot \tdun{$\gamma$}\tdun{$y$}
		+\tddeux{$\alpha$}{$\beta$}\tdun{$\gamma$}\ot \tdun{$y$}
		+\tddeux{$\alpha$}{$\beta$}\tdun{$\gamma$} \tdun{$y$}\ot \etree\\
		&+\etree \ot \tdun{$\gamma$}\tddeux{$\alpha$}{$\beta$}\tdun{$y$}
		+ \tdun{$\gamma$}\ot \tddeux{$\alpha$}{$\beta$}\tdun{$y$}
		+\tdun{$\gamma$}\tdun{$\beta$}\ot \tdun{$\alpha$}\tdun{$y$}
		+\tdun{$\gamma$}\tddeux{$\alpha$}{$\beta$}\ot \tdun{$y$}
		+\tdun{$\gamma$}\tddeux{$\alpha$}{$\beta$}\tdun{$y$}\ot \etree\\
		&+\etree \ot \tdun{$\gamma$} \tdun{$y$}\tddeux{$\alpha$}{$\beta$}
		+ \tdun{$\gamma$}\ot \tdun{$y$}\tddeux{$\alpha$}{$\beta$}
		+ \tdun{$\gamma$} \tdun{$y$}\ot \tddeux{$\alpha$}{$\beta$}
		+\tdun{$\gamma$}\tdun{$y$}\tdun{$\beta$}\ot \tdun{$\alpha$}
		+\tdun{$\gamma$} \tdun{$y$}\tddeux{$\alpha$}{$\beta$}\ot \etree).
	\end{align*}
	Applying Theorem~\mref{thm:preope}, we obtain
	\begin{align*}
		(F_1\rhd_{\lambda,\mu} F_2)\rhd_{\lambda,\mu} F_3=& \Big(\mu(\tdun{$x$} \tdun{$\alpha$}+\tdun{$\beta$}\tdun{$x$})-\lambda (\tdun{$x$} \tddeux{$\alpha$}{$\beta$} +\tdun{$\beta$}\tdun{$x$} \tdun{$\alpha$}+\tddeux{$\alpha$}{$\beta$} \tdun{$x$} )\Big)\rhd_{\lambda,\mu} F_3\\
		=&\mu^2(\tdun{$x$}\tdun{$\alpha$}\tdun{$y$}+\tdun{$\beta$}\tdun{$x$}\tdun{$y$}+\tdun{$\gamma$}\tdun{$x$}\tdun{$\alpha$}+\tdun{$\gamma$}\tdun{$\beta$}\tdun{$x$})\\
		&-\mu\lambda(\tdun{$x$}\tddeux{$\alpha$}{$\beta$}\tdun{$y$}+\tddeux{$\alpha$}{$\beta$}\tdun{$x$}\tdun{$y$}+\tdun{$\gamma$}\tddeux{$\alpha$}{$\beta$}\tdun{$x$}+\tdun{$\gamma$}\tdun{$x$}\tddeux{$\alpha$}{$\beta$}\\
		&+\tdun{$\beta$}\tdun{$x$}\tdun{$\alpha$}\tdun{$y$}+\tdun{$\gamma$}\tdun{$\beta$}\tdun{$x$}\tdun{$\alpha$}+\tdun{$x$}\tdun{$\alpha$}\tdun{$\gamma$}\tdun{$y$}+\tdun{$\beta$}\tdun{$x$}\tdun{$\gamma$}\tdun{$y$}\\
		&+\tdun{$\gamma$}\tdun{$x$}\tdun{$\alpha$}\tdun{$y$}+\tdun{$\gamma$}\tdun{$\beta$}\tdun{$x$}\tdun{$y$}+\tdun{$\gamma$}\tdun{$y$}\tdun{$x$}\tdun{$\alpha$}+\tdun{$\gamma$}\tdun{$y$}\tdun{$\beta$}\tdun{$x$})\\
		&+\lambda^2 (\tdun{$x$} \tddeux{$\alpha$}{$\beta$}\tdun{$\gamma$}\tdun{$y$} +\tdun{$\beta$}\tdun{$x$} \tdun{$\alpha$}\tdun{$\gamma$}\tdun{$y$}+\tddeux{$\alpha$}{$\beta$} \tdun{$x$}  \tdun{$\gamma$}\tdun{$y$}
		+\tdun{$\gamma$}\tdun{$x$} \tddeux{$\alpha$}{$\beta$}\tdun{$y$} +\tdun{$\gamma$}\tdun{$\beta$}\tdun{$x$} \tdun{$\alpha$}\tdun{$y$}\\
		&+\tdun{$\gamma$}\tddeux{$\alpha$}{$\beta$} \tdun{$x$} \tdun{$y$}+\tdun{$\gamma$}\tdun{$y$}\tdun{$x$} \tddeux{$\alpha$}{$\beta$} +\tdun{$\gamma$}\tdun{$y$}\tdun{$\beta$}\tdun{$x$} \tdun{$\alpha$}
		+\tdun{$\gamma$}\tdun{$y$}\tddeux{$\alpha$}{$\beta$} \tdun{$x$}) ,\\
		F_1\rhd_{\lambda,\mu} (F_2\rhd_{\lambda,\mu} F_3)=&\mu^2(\tdun{$x$} \tdun{$\alpha$}\tdun{$y$}+\tdun{$\beta$}\tdun{$x$} \tdun{$y$}+\tddeux{$\alpha$}{$\beta$}\tdun{$x$}+\tdun{$x$} \tddeux{$\alpha$}{$\beta$}+\tdun{$\gamma$}\tdun{$x$} \tdun{$\alpha$}+\tdun{$\gamma$}\tdun{$\beta$}\tdun{$x$}\\&
		-\mu\lambda(\tdun{$x$} \tddeux{$\alpha$}{$\beta$}\tdun{$y$}+\tdun{$\beta$} \tdun{$x$}\tdun{$\alpha$}\tdun{$y$}+ \tddeux{$\alpha$}{$\beta$} \tdun{$x$} \tdun{$y$}+\tddeux{$\alpha$}{$\beta$}\tdun{$y$}\tdun{$x$}+\tdun{$x$} \tdun{$\gamma$}\tddeux{$\alpha$}{$\beta$}\\
		&+\tdun{$\gamma$} \tdun{$x$} \tddeux{$\alpha$}{$\beta$}+\tdun{$\gamma$}\tdun{$\beta$} \tdun{$x$} \tdun{$\alpha$}+\tdun{$\gamma$}\tddeux{$\alpha$}{$\beta$} \tdun{$x$} +\tdun{$x$}  \tdun{$\alpha$}\tdun{$\gamma$}\tdun{$y$}+\tdun{$\beta$} \tdun{$x$} \tdun{$\gamma$}\tdun{$y$} \\
		&+ \tddeux{$\alpha$}{$\beta$}\tdun{$x$} \tdun{$y$}+\tddeux{$\alpha$}{$\beta$}\tdun{$\gamma$}\tdun{$x$}+\tdun{$x$} \tddeux{$\alpha$}{$\beta$}\tdun{$y$}+\tdun{$\gamma$} \tdun{$x$}\tdun{$\alpha$}\tdun{$y$}+\tdun{$\gamma$}\tdun{$\beta$} \tdun{$x$} \tdun{$y$}\\
		&+\tdun{$\gamma$}\tddeux{$\alpha$}{$\beta$}\tdun{$x$}+\tdun{$x$} \tdun{$y$}\tddeux{$\alpha$}{$\beta$}+\tdun{$\gamma$} \tdun{$x$} \tddeux{$\alpha$}{$\beta$}+\tdun{$\gamma$}\tdun{$y$} \tdun{$x$} \tdun{$\alpha$}+\tdun{$\gamma$}\tdun{$y$}\tdun{$\beta$} \tdun{$x$})\\
		&+\lambda^2(\tdun{$x$} \tddeux{$\alpha$}{$\beta$}\tdun{$\gamma$}\tdun{$y$}
		+\tdun{$\beta$}\tdun{$x$} \tdun{$\alpha$}\tdun{$\gamma$}\tdun{$y$}
		+\tddeux{$\alpha$}{$\beta$}\tdun{$x$} \tdun{$\gamma$}\tdun{$y$}
		+\tddeux{$\alpha$}{$\beta$}\tdun{$\gamma$}\tdun{$x$} \tdun{$y$}
		+\tddeux{$\alpha$}{$\beta$}\tdun{$\gamma$} \tdun{$y$}\tdun{$x$} \\
		&+\tdun{$x$} \tdun{$\gamma$}\tddeux{$\alpha$}{$\beta$}\tdun{$y$}
		+ \tdun{$\gamma$}\tdun{$x$} \tddeux{$\alpha$}{$\beta$}\tdun{$y$}
		+\tdun{$\gamma$}\tdun{$\beta$}\tdun{$x$} \tdun{$\alpha$}\tdun{$y$}
		+\tdun{$\gamma$}\tddeux{$\alpha$}{$\beta$}\tdun{$x$} \tdun{$y$}
		+\tdun{$\gamma$}\tddeux{$\alpha$}{$\beta$}\tdun{$y$}\tdun{$x$} \\
		&+\tdun{$x$} \tdun{$\gamma$} \tdun{$y$}\tddeux{$\alpha$}{$\beta$}
		+ \tdun{$\gamma$}\tdun{$x$} \tdun{$y$}\tddeux{$\alpha$}{$\beta$}
		+ \tdun{$\gamma$} \tdun{$y$}\tdun{$x$} \tddeux{$\alpha$}{$\beta$}
		+\tdun{$\gamma$}\tdun{$y$}\tdun{$\beta$}\tdun{$x$} \tdun{$\alpha$}
		+\tdun{$\gamma$} \tdun{$y$}\tddeux{$\alpha$}{$\beta$}\tdun{$x$}).
	\end{align*}
	Thus
	\begin{align*}
		F_1\rhd_{\lambda,\mu} (F_2\rhd_{\lambda,\mu} F_3)-(F_1\rhd_{\lambda,\mu} F_2)\rhd_{\lambda,\mu} F_3
		=&~\mu^2(\tddeux{$\alpha$}{$\beta$}\tdun{$x$}+\tdun{$x$}\tddeux{$\alpha$}{$\beta$})-\mu\lambda\Big((\tddeux{$\alpha$}{$\beta$}\tdun{$y$}\tdun{$x$}+\tdun{$x$}\tdun{$y$}\tddeux{$\alpha$}{$\beta$})\\
		&+(\tddeux{$\alpha$}{$\beta$}\tdun{$\gamma$}\tdun{$x$}+\tdun{$x$}\tdun{$\gamma$}\tddeux{$\alpha$}{$\beta$})+(\tddeux{$\alpha$}{$\beta$}\tdun{$x$}\tdun{$y$}+\tdun{x}\tddeux{$\alpha$}{$\beta$}\tdun{$y$})\\
		&+(\tdun{$\gamma$}\tddeux{$\alpha$}{$\beta$}\tdun{$x$}+\tdun{$\gamma$}\tdun{$x$}\tddeux{$\alpha$}{$\beta$})\Big)+\lambda^2\Big((\tddeux{$\alpha$}{$\beta$}\tdun{$\gamma$}\tdun{$x$} \tdun{$y$}+\tdun{$x$} \tdun{$\gamma$}\tddeux{$\alpha$}{$\beta$}\tdun{$y$})\\
		&+(\tddeux{$\alpha$}{$\beta$}\tdun{$\gamma$} \tdun{$y$}\tdun{$x$} +\tdun{$x$} \tdun{$\gamma$} \tdun{$y$}\tddeux{$\alpha$}{$\beta$})
		+(\tdun{$\gamma$}\tddeux{$\alpha$}{$\beta$}\tdun{$y$}\tdun{$x$} +\tdun{$\gamma$}\tdun{$x$} \tdun{$y$}\tddeux{$\alpha$}{$\beta$})\Big),
	\end{align*}
	which is symmetric in $F_1=\tdun{$x$}$ and $F_2=\tddeux{$\alpha$}{$\beta$}$ and hence
	\begin{align*}
		(F_1\rhd_{\lambda,\mu} F_2)\rhd_{\lambda,\mu} F_3-F_1\rhd_{\lambda,\mu} (F_2\rhd_{\lambda,\mu} F_3)=(F_2\rhd_{\lambda,\mu} F_1)\rhd_{\lambda,\mu} F_3-F_2\rhd_{\lambda,\mu} (F_1\rhd_{\lambda,\mu} F_3).
	\end{align*}
\end{exam}

\begin{remark}
	Let us emphasize that our pre-Lie algebra $(\hrts, \rhd_{\lambda,\mu})$ is different from the one
	introduced by Chapoton and Livernet~\mcite{CL01} from the viewpoint of operad.
	Under the framework of~\mcite{CL01}, Matt gave an example:
	\begin{align*}
		\tdun{$\beta$}\rhd \tddeux{$\alpha$}{$\beta$}=2\ \tdtroisun{$\alpha$}{$\beta$}{$\beta$}+\tdtroisdeux{$\alpha$}{$\beta$}{$\beta$},
	\end{align*}
	which is different from $\tdun{$\beta$}\rhd_{\lambda,\mu} \tddeux{$\alpha$}{$\beta$}= -\lambda (\tdun{$\beta$} \tddeux{$\alpha$}{$\beta$} +\tdun{$\beta$}\tdun{$\beta$} \tdun{$\alpha$}+\tddeux{$\alpha$}{$\beta$} \tdun{$\beta$} )$ if we replace $F_1$ by $\tdun{$\beta$}$ in Example~\mref{exam:preL}.
\end{remark}

\noindent
{\bf Acknowledgments.} This work is supported by the National Natural Science Foundation of China (12101316).

\noindent
{\bf Declaration of interests.} The authors have no conflicts of interest to disclose.

\noindent
{\bf Data availability.} Data sharing is not applicable as no new data were created or analyzed.

\end{document}

\textcolor{blue}{To keep for later}

\textcolor{blue}{I suggest to cut the part on the counit, as a more general result is obtained later.}

-----------------------------------------------------------------

When $\lambda=-1$, an $\epsilon$-unitary bialgebra of weight -1 with a counit can be constructed on $\hrts$.  Define a counit $\dpl$ : $\hrts$ $\to$ $\mathbf{k}$.For the initial step of dep$(F)=0$, we define
\begin{equation}
\varepsilon_{RT}(F) :=
\begin{cases}
	1_k & \text{if } F = 1, \\
	-\mu 1_k & \text{if } F = \bullet_x \text{ for some } x \in X, \\
	\varepsilon_{RT}(\bullet_{x_1}) \cdots \varepsilon_{RT}(\bullet_{x_m}) & \text{if } F = \bullet_{x_1} \cdots \bullet_{x_m} \text{ with } m \geq 2.	
\end{cases}
\mlabel{eq:ccounit}
\end{equation}
For the induction step of $\mathrm{dep}(F) \geq 1$, then $F \neq 1$ and $\mathrm{bre}(F) \geq 1$. If $\mathrm{bre}(F) = 1$, we have $F = B_\omega^{+}(\overline{F})$, for some $\overline{F} \in \rfs$ and $\omega \in \Omega$, and define
\begin{equation}
\varepsilon_{RT}(F) = \varepsilon_{RT}(B_{\omega}^{+}(\overline{F})) = -\mu \varepsilon_{RT}(\overline{F}).
\mlabel{eq:counitt}
\end{equation}
If $\mathrm{bre}(F) \geq 2$, we write $F = T_1 \cdots T_m$ with $m \geq 2$ and $T_1, \cdots, T_m \in \rfs$, and define
\begin{equation}
\varepsilon_{RT}(F) = \varepsilon_{RT}(T_1) \cdots \varepsilon_{RT}(T_m),
\mlabel{eq:couunit}
\end{equation}
where $\varepsilon_{RT}(T_1), \cdots, \varepsilon_{RT}(T_m)$ are defined in Eqs.\eqref{eq:ccounit} and \eqref{eq:counitt}.
\begin{lemma}\label{lem:cohom}
The linear map $\varepsilon_{RT} : H_{RT}(X, \Omega) \to $ $\mathbf{k}$ is an algebra homomorphism.
\end{lemma}
\begin{proof}
By Eq.\eqref{eq:ccounit}, we have $\varepsilon_{RT}(1) = 1_k$. We needs to show that $\varepsilon_{RT}(FF') = \varepsilon_{RT}(F)\varepsilon_{RT}(F')$, for $F, F' \in \rfs$. If $F = 1$ or $F' = 1$, without loss of generality, we take $F' = 1$. Then $\varepsilon_{RT}(F') = 1_k$, and so
$$
\varepsilon_{RT}(FF') = \varepsilon_{RT}(F) = \varepsilon_{RT}(F)1_k = \varepsilon_{RT}(F)\varepsilon_{RT}(F').
$$
If $F \neq 1$ and $F' \neq 1$, we write $F = T_1 \cdots T_m$ and $F' = T'_1 \cdots T'_{m'}$, where $m, m' \geq 1$ and $T_i, T_j \in \mathcal{F}_{RT}(X, \Omega)$ for $1 \leq i \leq m$ and $1 \leq j \leq m'$. By Eq.\eqref{eq:couunit}, we have

$$
\begin{aligned}
	\varepsilon_{RT}(FF') &= \varepsilon_{RT}(T_1 \cdots T_m T'_1 \cdots T'_{m'}) \\
	&= \varepsilon_{RT}(T_1) \cdots \varepsilon_{RT}(T_m) \varepsilon_{RT}(T'_1) \cdots \varepsilon_{RT}(T'_{m'}) \\
	&= \varepsilon_{RT}(T_1 \cdots T_m) \varepsilon_{RT}(T'_1 \cdots T'_{m'}) \\
	&= \varepsilon_{RT}(F) \varepsilon_{RT}(F').
\end{aligned}
$$
as required.
\end{proof}
\begin{theorem}
The quintuple $(\hrts,\, \conc, \etree, \,\col,\dpl)$ is an  $\epsilon$-unitary bialgebra of weight $-1$ (with counit).
\end{theorem}

\begin{proof}
By Theorem~\mref{thm:rt1},we only need to verify the counicity for $F \in \hrts $. We apply the induction on $\dep(F) \geq 0$ to check the counicity conditions:
\begin{align}
	(\dpl \ot \id)\col(F)=\beta_l(F) \quad\text{and}\quad
	(\id \ot \dpl)\col(F)=\beta_r(F),
\end{align}
where $\beta_{l}: \hrts \to \bfk \ot \hrts$ is defined by $F \mapsto 1_k \ot F$ and $\beta_{r}: \hrts \to \hrts \ot \bfk$ is defined by $F \mapsto F \ot 1_k$.
\\ \hspace*{1em} For the initial step of $\dep(F)=0$,we have $F=\bullet_{x_{1}}\bullet_{x_{2}}\cdots\bullet_{x_{m}}$ for some $m \geq 0$,with the convention that $F=1$.When $m=0$,we have
\begin{align*}
	&(\dpl \ot \id)\col(F)=(\dpl \ot \id)\col(1)=\dpl(1) \ot 1=1_k \ot 1=\beta_l(1), \\
	&(\id \ot \dpl)\col(F)=(\id \ot \dpl)\col(1)=1 \ot \dpl(1)=1 \ot 1_k=\beta_r(1).
\end{align*}
When $m \geq 1$,by Lemma~\mref{lem:rt11}
\begin{align*}
	&(\dpl \ot \id)\col(F) \\
	=&\ (\dpl \ot \id)\left(\mu \sum_{i=1}^{m}\bullet_{x_{1}}\cdots\bullet_{x_{i-1}}\otimes\bullet_{x_{i+1}}\cdots\bullet_{x_{m}}
	+
	\sum_{i=0}^{m}\bullet_{x_{1}}\cdots\bullet_{x_{i}}
	\otimes\bullet_{x_{i+1}}\cdots\bullet_{x_{m}}\right)\\
	=&\ \mu \sum_{i=1}^{m}\dpl(\bullet_{x_{1}}\cdots\bullet_{x_{i-1}})\otimes\bullet_{x_{i+1}}\cdots\bullet_{x_{m}}
	+
	\sum_{i=0}^{m}\dpl(\bullet_{x_{1}}\cdots\bullet_{x_{i}})
	\otimes\bullet_{x_{i+1}}\cdots\bullet_{x_{m}}\\
	=&\ \mu \sum_{i=1}^{m}\dpl(\bullet_{x_{1}})\cdots\dpl(\bullet_{x_{i-1}})\otimes\bullet_{x_{i+1}}\cdots\bullet_{x_{m}}
	+
	\sum_{i=0}^{m}\dpl(\bullet_{x_{1}})\cdots\dpl(\bullet_{x_{i}})
	\otimes\bullet_{x_{i+1}}\cdots\bullet_{x_{m}}\\
	&\hspace{9cm}(\text{by Eq.~(\mref{eq:ccounit})})\\
	=&\ -\sum_{i=1}^{m}(-\mu)^{i}1_k\otimes\bullet_{x_{i+1}}\cdots\bullet_{x_{m}}
	+
	\sum_{i=1}^{m}(-\mu)^{i}1_k
	\otimes\bullet_{x_{i+1}}\cdots\bullet_{x_{m}}+1_k\ot\bullet_{x_{1}}\cdots\bullet_{x_{m}}\\
	=&\ 1_k\ot F=\beta_l(F).
\end{align*}
On the other hand,
\begin{align*}
	&(\id \ot \dpl)\col(F) \\
	=&\ (\id \ot \dpl)\left(\mu \sum_{i=1}^{m}\bullet_{x_{1}}\cdots\bullet_{x_{i-1}}\otimes\bullet_{x_{i+1}}\cdots\bullet_{x_{m}}
	+
	\sum_{i=0}^{m}\bullet_{x_{1}}\cdots\bullet_{x_{i}}
	\otimes\bullet_{x_{i+1}}\cdots\bullet_{x_{m}}\right)\\
	=&\ \mu \sum_{i=1}^{m}\bullet_{x_{1}}\cdots\bullet_{x_{i-1}}\otimes\dpl(\bullet_{x_{i+1}}\cdots\bullet_{x_{m}})
	+
	\sum_{i=0}^{m}\bullet_{x_{1}}\cdots\bullet_{x_{i}}
	\otimes\dpl(\bullet_{x_{i+1}}\cdots\bullet_{x_{m}})\\
	=&\ \mu \sum_{i=1}^{m}\bullet_{x_{1}}\cdots\bullet_{x_{i-1}}\otimes\dpl(\bullet_{x_{i+1}})\cdots\dpl(\bullet_{x_{m}})
	+
	\sum_{i=0}^{m}\bullet_{x_{1}}\cdots\bullet_{x_{i}}\otimes\dpl(\bullet_{x_{i+1}})\cdots\dpl(\bullet_{x_{m}})\\
	&\hspace{9cm}(\text{by Eq.~(\mref{eq:ccounit})})\\
	=&\ -\sum_{i=1}^{m}\bullet_{x_{1}}\cdots\bullet_{x_{i-1}}\ot(-\mu)^{m-i+1}1_k
	+
	\sum_{i=0}^{m}\bullet_{x_{1}}\cdots\bullet_{x_{i}}
	\otimes(-\mu)^{m-i}1_k\\
	=&\ -\sum_{i=1}^{m}\bullet_{x_{1}}\cdots\bullet_{x_{i-1}}\ot(-\mu)^{m-i+1}1_k+\sum_{i=1}^{m}\bullet_{x_{1}}\cdots\bullet_{x_{i}}
	\otimes(-\mu)^{m-i+1}1_k+\bullet_{x_{1}}\cdots\bullet_{x_{m}}\ot 1_k\\
	=&\ F\ot 1_k=\beta_r(F).
\end{align*}
\hspace*{1em} Suppose that Eq.~(\mref{eq:couunit}) holds for \(\text{dep}(F) \leq n\) for an \(n \geq 0\) and consider the case of \(\text{dep}(F) = n+1\). We next apply the induction on breadth. Since \(\text{dep}(F) = n+1 \geq 1\), we have \(F \neq 1\) and \(\text{bre}(F) \geq 1\). When \(\text{bre}(F) = 1\), we may write \(F = B^+_\omega(\overline{F})\) for some \(\overline{F} \in \mathcal{F}(X, \Omega)\) and \(\omega \in \Omega\). Then
\begin{align*}
	(\dpl \ot \id)\col(F)
	=& (\dpl \ot \id)(F \ot 1+\mu\overline{F} \ot 1+(\id \ot B^+_\omega)\col(\overline{F}) )\quad(\text{by Eq.~(\mref{eq:cdbp})}) \\
	=&\ \dpl(B^+_\omega(F)) \ot 1+\mu\dpl(\overline{F}) \ot 1+(\id \ot B^+_\omega)(\dpl \ot \id)\col(\overline{F}) \\
	=&\ -\mu\dpl(\overline{F}) \ot 1+\mu\dpl(\overline{F}) \ot 1+(\id \ot B^+_\omega)(\dpl \ot \id)\col(\overline{F})\quad(\text{by Eq.~(\mref{eq:counitt})}) \\
	=&\ (\id \ot B^+_\omega)(1_k \ot \overline{F})\quad(\text{by the induction hypothesis})\\
	=&\ 1_k \ot F=\beta_l(F).
\end{align*}
On the other hand,
\begin{align*}
	(\id \ot \dpl)\col(F)
	=& (\id \ot \dpl)(F \ot \id+\mu\overline{F} \ot 1+(\id \ot B^+_\omega)\col(\overline{F}) )\quad(\text{by Eq.~(\mref{eq:cdbp})}) \\
	=&\ F \ot 1_k+\mu\overline{F} \ot 1_k+(\id \ot \dpl B^+_\omega)\col(\overline{F}) \quad(\text{by Eq.~(\mref{eq:ccounit})}) \\
	=&\ F \ot 1_k+\mu\overline{F} \ot 1_k-\mu(\id \ot \dpl)\col(\overline{F}) \quad(\text{by Eq.~(\mref{eq:counitt})}) \\
	=&\ F \ot 1_k+\mu\overline{F} \ot 1_k-\mu\overline{F} \ot 1_k\quad(\text{by the induction hypothesis})\\
	=&\  F \ot 1_k=\beta_r(F).
\end{align*}
\hspace*{1em} Assume that Eq.~(\mref{eq:couunit}) holds for \(\text{dep}(F) = n + 1\) and \(\text{bre}(F) \leq m\), in addition to \(\text{dep}(F) \leq n\) by the first induction hypothesis. Consider the case when \(\text{dep}(F) = n + 1\) and \(\text{bre}(F) = m + 1 \geq 2\). As in the proof of the coassociativity, let \(F = F_1F_2\) for some \(F_1, F_2 \in \mathcal{F}(X, \Omega)\) with \(0 < \text{bre}(F_1), \text{bre}(F_2) < \text{bre}(F)\). Thus
\begin{align*}
	(\dpl \ot \id)\col(F)=& (\dpl \ot \id)\col(F_{1}F_{2})\\
	=&\ (\varepsilon_{\text{RT}} \otimes \text{id})(F_1 \cdot \Delta_\epsilon(F_2) + \Delta_\epsilon(F_1) \cdot F_2 - F_1 \otimes F_2) \quad (\text{by Lemma~\mref {lem:colff}}) \\
	=&\ (\varepsilon_{\text{RT}} \otimes \text{id}) \left( \sum_{(F_2)} F_1 F_{2(1)} \otimes F_{2(2)} + \sum_{(F_1)} F_{1(1)} \otimes F_{1(2)} F_2 - F_1 \otimes F_2 \right)\quad(\text{by Eq.~(\mref{eq:dota})})\\
	=&\ \sum_{(F_2)} \varepsilon_{\text{RT}}(F_1 F_{2(1)}) \otimes F_{2(2)} + \sum_{(F_1)} \varepsilon_{\text{RT}}(F_{1(1)}) \otimes F_{1(2)} F_2 - \varepsilon_{\text{RT}}(F_1) \otimes F_2\\
	=&\ \sum_{(F_2)} \varepsilon_{\text{RT}}(F_1)\dpl( F_{2(1)}) \otimes F_{2(2)} + \sum_{(F_1)} \varepsilon_{\text{RT}}(F_{1(1)}) \otimes F_{1(2)} F_2 - \varepsilon_{\text{RT}}(F_1) \otimes F_2 \\
	&\hspace{9cm}(\text{by Lemma~\mref {lem:cohom}}) \\
	=&\ \dpl(F_1)[(\dpl \ot \id)\col(F_2))]+[(\dpl \ot \id)\col(F_1)]F_2-\varepsilon_{\text{RT}}(F_1) \otimes F_2\\
	=&\ \dpl(F_1)(1_k \ot F_2)+(1_k \ot F_1)F_2-\dpl(F_1)(1_k \ot F_2)\\
	&\hspace{7cm}(\text{by the induction hypothesis}) \\
	=&\ 1_k \ot F_1 F_2=1_k \ot F=\beta_{l}(F).
\end{align*}
On the other hand,
\begin{align*}
	(\id \ot \dpl)\col(F)=& (\id \ot \dpl)\col(F_{1}F_{2})\\
	=&\ (\id \otimes \dpl)(F_1 \cdot \Delta_\epsilon(F_2) + \Delta_\epsilon(F_1) \cdot F_2 - F_1 \otimes F_2) \quad (\text{by Lemma~\mref {lem:colff}}) \\
	=&\ (\id \otimes \dpl) \left( \sum_{(F_2)} F_1 F_{2(1)} \otimes F_{2(2)} + \sum_{(F_1)} F_{1(1)} \otimes F_{1(2)} F_2 - F_1 \otimes F_2 \right)\quad(\text{by Eq.~(\mref{eq:dota})})\\
	=&\ \sum_{(F_2)} F_1 F_{2(1)} \otimes \dpl(F_{2(2)}) + \sum_{(F_1)} F_{1(1)} \otimes \dpl(F_{1(2)} F_2) - F_1 \otimes \dpl(F_2)\\
	=&\ \sum_{(F_2)} F_1 F_{2(1)} \otimes \dpl(F_{2(2)}) + \sum_{(F_1)} F_{1(1)} \otimes \dpl(F_{1(2)})\dpl( F_2) - F_1 \otimes \dpl(F_2)\\
	&\hspace{9cm}(\text{by Lemma~\mref {lem:cohom}}) \\
	=&\ F_1[(\id \ot \dpl)\col(F_2)]+[(\id \ot \dpl)\col(F_1)]\dpl(F_2) - F_1 \otimes \dpl(F_2)\\
	=&\ F_1(F_2 \ot 1_k)+(F_1 \ot 1_k)\dpl(F_2)- (F_1 \ot 1_k)\dpl(F_2)\\
	&\hspace{7cm}(\text{by the induction hypothesis}) \\
	=&\ F_1 F_2 \ot 1_k=F \ot 1_k=\beta_{r}(F).
\end{align*}
This completes the induction on the breadth and hence the induction on the depth.
\end{proof}
-----------------------------------------------------------------

-------------------------------------------------------
\section{An alternative construction}

\subsection{A general construction on coalgebras}

\begin{prop}
Let $(C,\Delta)$ be a coassociative coalgebra (not necessarily counitary) and let $f\in C^*$.
We put $\Delta_f=(\id_C\otimes f\otimes \id_C)\circ \Delta^{(2)}$. Then $\Delta_f$ is coassociative.
\end{prop}

\begin{proof}
Let $x\in C$. We use Sweedler's notation:
\begin{align*}
	\Delta(x)&=\sum x^{(1)}\otimes x^{(2)},&
	\Delta^{(2)}(x)&=(\Delta\otimes \id_C)\circ \Delta(x)=\sum x^{(1)}\otimes x^{(2)}\otimes x^{(3)},
\end{align*}
etc. Then
\[\Delta_f(x)=\sum f\left(x^{(2)}\right)x^{(1)}\otimes x^{(3)}.\]
Therefore,
\begin{align*}
	(\Delta_f \otimes \id_C)\circ \Delta_f(x)&=\sum\sum f\left(x^{(2)}\right)f\left(x^{(1)(2)}\right)x^{(1)(1)}\otimes x^{(1)(3)}\otimes x^{(3)}\\
	&=\sum f\left(x^{(2)}\right)f\left(x^{4}\right)x^{(1)}\otimes x^{(3)}\otimes x^{(5)}\\
	&=\sum\sum f\left(x^{(2)}\right)f\left(x^{(3)(2)}\right)x^{(1)}\otimes x^{(3)(1)}\otimes x^{(3)(3)}\\
	&=(\id_C\otimes \Delta_f)\circ \Delta_f(x).
\end{align*}
So $\Delta_f$ is coassociative.
\end{proof}

\begin{remark}
If $C$ is counitary, of counit $\varepsilon$, then $\Delta_\varepsilon=\Delta$.
\end{remark}

\begin{prop}\label{propcounite}
Let $(C,\Delta)$ be a coassociative coalgebra and $f\in C^*$. Then $\Delta_f$ is counitary if, and only if,
$\Delta$ is counitary and $f$ is invertible for the convolution product $*$ induced by $\Delta$ on $C^*$.
If so, the counit of $\Delta_f$ is the inverse $f^{*-1}$ of $f$ for this product $*$.
\end{prop}

\begin{proof}
$\Longrightarrow$. Let us denote by $g$ the counit of $\Delta_f$. Then
\[(g\otimes \id_C)\circ \Delta_f=(g\otimes f\otimes \id_C)\circ (\Delta \otimes \id_C)\circ \Delta
=(g*f \otimes \id_C)\circ \Delta=\id_C.\]	
So $g*f$ is a left counit for $\Delta$. Similarly, $f*g$ is a right counit for $\Delta$.
As $\Delta$ is coassociative, $f*g=g*f$ is a counit for $\Delta$, and $g=f^{*-1}$.\\

$\Longleftarrow$. Let us put $g=f^{*-1}$. Then, as computed before,
\[(g\otimes \id_C)\circ \Delta_f=(g*f \otimes \id_C)\circ \Delta=(\varepsilon_\Delta \otimes \id)\circ \Delta=\id_C.\]
Similarly, $(\id_C\otimes g)\circ \Delta=\id_C$, so $g$ is a counit for $\Delta_C$. 	
\end{proof}

\subsection{Application to $\epsilon$-bialgebras}

\begin{defn}
Let  $(A,m,\Delta)$ be a unitary and counitary $\epsilon$-bialgebra of weight $-1$. An infinitesimal character of $A$
is a map $f\in A^*$ such that
\begin{align*}
	&\forall x,y\in A,&f(xy)=f(x)\varepsilon_\Delta(y)+\varepsilon_\Delta(x)f(y).
\end{align*}
\end{defn}

\begin{prop}\label{prop2}
Let  $(A,m,\Delta)$ be a counitary $\epsilon$-bialgebra of weight $-1$ and $f\in A^*$.
Then $(A,m,\Delta_f)$ is an $\epsilon$-bialgebra of weight $0$ if, and only if, $f$ is an infinitesimal character of $A$.
\end{prop}

\begin{proof}
Firstly, for any $x,y\in A$,
\begin{align*}
	\Delta^{(2)}(xy)&=\sum x^{(1)}\otimes x^{(2)}\otimes x^{(3)}y+\sum\sum x^{(1)}\otimes x^{(2)}y^{(1)}\otimes y^{(2)}\\
	&+\sum xy^{(1)}\otimes y^{(2)}\otimes y^{(3)}-\sum x\otimes y^{(1)}\otimes y^{(2)}-\sum x^{(1)}\otimes x^{(2)}\otimes y,
\end{align*}
so
\begin{align*}
	\Delta_f(xy)&=\Delta_f(x)y+\sum\sum f\left(x^{(2)}y^{(1)}\right)x^{(1)}\otimes y^{(2)}\\
	&+x\Delta_f(y)-\sum f\left(y^{(1)}\right) x\otimes y^{(2)}-\sum f\left(x^{(2)}\right) x^{(1)}\otimes y.
\end{align*}
So $(A,m,\Delta_f)$ is infinitesimal of weight $0$ if, and only if, for any $x,y\in A$,
\[\sum\sum f\left(x^{(2)}y^{(1)}\right)x^{(1)}\otimes y^{(2)}=\sum f\left(y^{(1)}\right) x\otimes y^{(2)}+\sum f\left(x^{(2)}\right) x^{(1)}\otimes y.\]

$\Longrightarrow$. Let us apply $\varepsilon_\Delta\otimes \varepsilon_\Delta$ to this. This gives
\[f(xy)=\varepsilon_\Delta(x)f(y)+f(x)\varepsilon_\Delta(y).\]
So $f$ is an infinitesimal character.

$\Longleftarrow$. Then
\begin{align*}
	\sum\sum f\left(x^{(2)}y^{(1)}\right)x^{(1)}\otimes y^{(2)}
	&=\sum\sum f\left(x^{(2)}\right)\varepsilon_\Delta\left(y^{(1)}\right)x^{(1)}\otimes y^{(2)}\\
	&+\sum\sum \varepsilon_\Delta\left(x^{(2)}\right)f\left(y^{(1)}\right)x^{(1)}\otimes y^{(2)}\\
	&=\sum f\left(x^{(2)}\right)x^{(1)}\otimes y+\sum f\left(y^{(1)}\right)x\otimes y^{(2)},
\end{align*}
so $\Delta_f$ is infinitesimal of weight $0$.
\end{proof}

\begin{coro}\label{corinfinitesimal}
Let  $(A,m,\Delta)$ be a counitary $\epsilon$-bialgebra of weight $-1$ and $f\in A^*$.
Then $(A,m,\Delta_f)$ is an $\epsilon$-bialgebra of weight $-\lambda$ if, and only if, $f+\lambda \varepsilon_\Delta$ is an infinitesimal character of $A$.
\end{coro}

\begin{proof}
We put $g=f+\lambda \Delta_\varepsilon$. Then
\[\Delta_g=\Delta_f+\lambda \Delta_{\varepsilon_\Delta}=\Delta_f+\lambda \Delta.\]
As $(A,m,\Delta)$ is infinitesimal of weight $-1$, $(A,m,\Delta_g)$ is infinitesimal of weight $0$ if, and only if,
$(A,m,\Delta_f)$ is infinitesimal of weight $-\lambda$. The result then comes directly from Proposition \ref{prop2}.
\end{proof}

\begin{prop}\label{propcouniteconnexe}
Let us assume that $(A,m,\Delta)$ is a connected, unitary and counitary $\epsilon$-bialgebra of weight $-1$, and let $f\in A^*$.
We denote by $*$ the convolution product on $A^*$ induced by $\Delta$.
Then $\Delta_f$ has a counit if, and only if, $f(1_A)$ is an invertible element of $\bfk$. If so, we put $f(1_A)=-\lambda$ and $g=f+\lambda \varepsilon_\Delta$.
The the counit of $\Delta_f$ is
\begin{align}
	\label{eq1}\varepsilon_{\Delta_f}&=-\sum_{k=0}^\infty \frac{1}{\lambda^{k+1}}g^{*k}.
\end{align}
\end{prop}

\begin{proof}
We have
\[\Delta_f(1_A)=-\lambda 1_A\otimes 1_A,\]
so, if $\lambda$ is not invertible in $\bfk$, $\Delta_f$ has no counit. Otherwise,
as $g(1_A)=0$, the formal series $h$ defined by (\ref{eq1}) makes sense when apply to any $x\in A$. Moreover,
\begin{align*}
	f*h&=(-\lambda \varepsilon_\Delta+g)*\left(-\sum_{k=0}^\infty \frac{1}{\lambda^{k+1}}g^{*k}\right)=\sum_{k=0}^\infty \frac{1}{\lambda^k}g^{*k}-\sum_{k=0}^\infty \frac{1}{\lambda^{k+1}}g^{*(k+1)}=\varepsilon_\Delta,
\end{align*}
so $h$ is a left inverse of $f$. It is similarly a right inverse.  By Proposition \ref{propcounite}, it is the counit of $\Delta_f$.
\end{proof}

\begin{exam} Let us consider the tensor algebra $T(V)$, as in Example \ref{exam:bialgebras}-(d).
Let $f\in V^*$. We extend it to $T(V)$ by
\[f(v_1\ldots v_n)=\begin{cases}
	f(v_1)\mbox{ if }n=1,\\
	0\mbox{ otherwise}.
\end{cases}\]	
Then $f$ is an infinitesimal character. The coproduct $\Delta_f$ is given by
\[\Delta_f(v_1\ldots v_n)=\sum_{i=1}^n f(v_i) v_1\ldots v_{i-1}\otimes v_{i+1}\ldots v_n.\]
With this coproduct and the usual concatenation product, $T(V)$ is an $\epsilon$-bialgebra of weight $0$.
\end{exam}

\begin{prop}\label{prop:antipode}
Let $(A,m,\Delta)$ be an infinitesimal Hopf algebra of weight $-1$. Its antipode is denoted by $S_0$.
Let $h$ be an infinitesimal character of $A$, such that $f=\varepsilon_\Delta+h$ is invertible, of inverse denoted by $g$
(which is the counit of $\Delta_f$). Then $(A,m,\Delta_f)$ is an infinitesimal Hopf algebra of weight $-1$,
and its antipode is $g\star \id_A\star g$.
\end{prop}

\begin{proof}
Let us prove that $T=g\star \id_A\star g$ in an antipode for $\Delta_f$. Let $x\in A$.
\begin{align*}
	T\star_f\id(x)&=m\circ (T\otimes \id)\circ \Delta_f(x)\\
	&=\sum T(x^{(1)})f(x^{(2)}) x^{(3)}\\
	&=\sum g(x^{(1)})S_0(x^{(2)}) g(x^{(3)})f(x^{(4)})x^{(5)}\\
	&=\sum g(x^{(1)})S_0(x^{(2)}) x^{(3)}&\mbox{ as $g\star f=\varepsilon_\Delta$}\\
	&=\sum g(x^{(1)})\varepsilon_\Delta(x^{(2)})1_A&\mbox{by definition of $S_0$}\\
	&=g(x)1_A.
\end{align*}
So $T$ is a left antipode for $(A,m,\Delta_f)$. Similarly, it is a right antipode.
\end{proof}

\subsection{Application to planar forests}

Let $F\in \rfs$. We denote by $n_F$ the number of vertices of $F$, and we index them according to the total order $\leq_{h,l}$:
\[v_{n_F}\leq_{h,l}\ldots \leq_{h,l} v_1.\]
For any $I\subseteq \{1,\ldots,n_F\}$ we denote by $F_I$ the element of $\rfs$ whose vertices are the elements of $I$,
with the partial orders $\leq_h$ and $\leq_l$ obtained by restriction to $I$.
Then (\cite{Foi09}) $\hrts$ is a connected, unitary and counitary $\epsilon$-bialgebra of weight $-1$ with the coproduct defined by
\[\Delta(F)=\sum_{k=0}^{n_F} F_{\{1,\ldots,k\}}\otimes F_{\{k+1,\ldots,n_F\}}.\]

We consider the map $f \in \hrts^*$ defined by
\begin{align*}
&\forall F\in \rfs,&f(F)&=\begin{cases}
	1\mbox{ if }|V(F)|=1,\\
	0\mbox{ otherwise}.
\end{cases}
\end{align*}
It is an infinitesimal character of $\hrts$.
By construction of $\Delta$, for any $F\in \rfs$,
\begin{align*}
\Delta_f(F)&=(\id_{\hrts} \otimes f\otimes \id_{\hrts})
\left(\sum_{0\leq k\leq l\leq n_F} F_{\{1,\ldots,k\}}\otimes  F_{\{{k+1},\ldots,l\}}
\otimes  F_{ \{{l+1},\ldots,n\}}\right)\\
&=\sum_{k=0}^{n_F-1} F_{\{1,\ldots,k\}}\otimes  f(F_{\{{k+1}\}})
\otimes  F_{ \{{k+2},\ldots,n_F\}}+0\\
&=\sum_{k=0}^{n_F-1} F_{\{1,\ldots,k\}}\otimes F_{\{k+2,\ldots,n_F\}}.
\end{align*}
This gives, with Proposition \ref{propcounite} and Corollary \ref{corinfinitesimal}, up to an reindexation:

\begin{theorem}
Let $\lambda,\mu\in \bfk$. We put $\Delta_{\lambda,\mu}=\Delta_{-\lambda\varepsilon_\Delta+\mu f}$.
For any $F\in \rfs$,
\[\Delta_{\lambda,\mu}(F)=-\lambda\left(\sum_{k=0}^{n_F} F_{\{1,\ldots,k\}}\otimes F_{\{k+1,\ldots,n_F\}}\right)
+\mu \left(\sum_{k=1}^{n_F} F_{\{1,\ldots,k-1\}}\otimes F_{\{k+1,\ldots,n_F\}}\right).\]
Moreover, $(\hrts,m,\Delta_{\lambda,\mu})$ is an infinitesimal bialgebra of weight $-\lambda$. It has a counit if, and only if, $\lambda$ is invertible in $\bfk$. If, so its counit is given  by
\[\varepsilon_{\lambda,\mu}(F)=-\frac{\mu^{n_F}}{\lambda^{n_F+1}}.\]
\end{theorem}

\begin{proof}
From Proposition \ref{propcouniteconnexe}, $\Delta_{\lambda,\mu}$ is counitary if, and only if, $\lambda$ is invertible in $\bfk$.
If so, still by Proposition \ref{propcouniteconnexe}, if $n\geq 1$,
\begin{align*}
	\varepsilon_{\lambda,\mu}(F)&=\sum_{k=1}^\infty
	\frac{(-1)^k\mu^k}{(-\lambda)^{k+1}}  f^{\otimes k}\circ \Delta^{(k-1)}(F)
	=-\sum_{k=1}^\infty
	\frac{\mu^k}{\lambda^{k+1}}  f^{\otimes k}\circ \Delta^{(k-1)}(F).
\end{align*}
As $f$ vanishes on any forest of degree $\neq 1$, $f^{\otimes k}\circ \Delta^{(k-1)}(F)=0$ if $k\neq n_F$, and
\[f^{\otimes n_F}\circ \Delta^{(n_F-1)}=f\left(F_{ \{1\}}\right)\ldots f\left(F_{ \{n_F\}}\right)+0=1,\]
so
\[\varepsilon_{\lambda,\mu}(F)=-\frac{\mu^n}{\lambda^{n+1}}+0.\qedhere\]
\end{proof}

The infinitesimal bialgebra $(\hrts,m,\Delta_{-1,0})$ is an infinitesimal Hopf algebra, and its antipode can be computed by Takeuchi's formula:
\[S_0(F)=\sum_{k=1}^{n_F}(-1)^k\sum_{\substack{V(F)=I_1\sqcup_{\leq_{h,l}}\ldots \sqcup_{\leq_{h,l}} I_k,\\ I_1,\ldots,I_k\neq \emptyset}} F_{\mid I_1}\ldots F_{\mid I_k}.\]
The theory of infinitesimal bialgebra shows that $S_0(F)$ is a primitive element (for $\Delta_{-1,0}$) if $F\neq 1$, and is zero if $\bre(F)\geq 2$.

\begin{prop}
$(\hrts,m,\Delta_{-1,\mu})$ in an infinitesimal Hopf algebra of weight $-1$. Its antipode is given by
\[S_\mu(F)=(-\mu)^{n_F}(n_F+1)+\sum_{1\leq k\leq l\leq n_F}(-\mu)^{n_F-l+k-1}S_0(F_{\mid \{k,\ldots,l\}}).\]
\end{prop}

\begin{proof}
We apply Proposition \ref{prop:antipode}: $S_\mu=g\star S0 \star g$. This gives
\begin{align*}
	S_\mu(F)&=\sum_{0\leq k\leq l\leq n_F}g(F_{\mid \{1,\ldots,k\}}) S_0(F_{\mid \{k+1,\ldots,l\}})g(F_{\mid \{l+1,\ldots,n_F\}})\\
	&=\sum_{0\leq k\leq l\leq n_F}(-\mu)^{k+n_F-l}S_0(F_{\mid \{k+1,\ldots,l\}})\\
	&=\sum_{0\leq k\leq n_F} (-\mu)^{n_F}+\sum_{0\leq k<l\leq n_F}(-\mu)^{k+n_F-l}S_0(F_{\mid \{k+1,\ldots,l\}})\\
	&=(-\mu)^{n_F}(n_F+1)+\sum_{1\leq k\leq l\leq n_F}(-\mu)^{n_F-l+k-1}S_0(F_{\mid \{k,\ldots,l\}}).
\end{align*}
where we have regrouped the terms where $l=k$,
for which $F_{\mid \{k+1,\ldots,l\}}=1$.
\end{proof}

\begin{exam}
Let $\alpha,\beta,\gamma \in \Omega$.
\begin{align*}
	S_\mu(\tdun{$\alpha$})&=-\tdun{$\alpha$}-2\mu,\\
	S_\mu(\tdun{$\alpha$}\tdun{$\beta$})&=\mu\tdun{$\alpha$}+\mu\tdun{$\beta$}+3\mu^2,\\
	S_\mu(\tddeux{$\alpha$}{$\beta$})&=-\tddeux{$\alpha$}{$\beta$}+\tdun{$\alpha$}\tdun{$\beta$}
	+\mu \tdun{$\alpha$}+\mu \tdun{$\beta$}+3\mu^2,\\
	S_\mu(\tdtroisdeux{$\alpha$}{$\beta$}{$\gamma$})&=-\tdtroisdeux{$\alpha$}{$\beta$}{$\gamma$}
	+\tddeux{$\beta$}{$\gamma$}\tdun{$\alpha$}+\tdun{$\gamma$}\tddeux{$\alpha$}{$\beta$}
	-\tdun{$\gamma$}\tdun{$\beta$}\tdun{$\alpha$}\\
	&-\mu(-\tddeux{$\beta$}{$\gamma$}+\tdun{$\gamma$}\tdun{$\beta$}-\tddeux{$\alpha$}{$\beta$}+\tdun{$\beta$}\tdun{$\alpha$})
	+\mu^2(-\tdun{$\alpha$}-\tdun{$\beta$}-\tdun{$\gamma$})-4\mu^3.
\end{align*}
\end{exam}

\begin{remark}
In particular, if $F=\bullet_{x_1}\ldots \bullet_{x_n}$ and $I\subseteq V(F)$, of cardinality $\geq 2$, then $\bre(F_{\mid I})\geq 2$, so $S_0(F_{\mid I})=0$. This gives
\[S_\mu(\bullet_{x_1}\ldots \bullet_{x_n})
=(n+1)(-\mu)^n+(-\mu)^{n-1}\sum_{i=0}^n S_0(\bullet_{x_i})
=(-1)^n\mu^{n-1}\left((n+1)\mu+\sum_{i=0}^n \bullet_{x_i}\right).\]
\end{remark}

\begin{prop}
For any $x\in \hrts$, for any $\omega \in \Omega$,
\[\Delta_{\lambda,\mu} \circ B^+_\omega(x)=-\lambda B^+_\omega(x)\otimes 1+\mu x\otimes 1+(\id_{\hrts}\otimes B^+_\omega)\circ\Delta_{\lambda,\mu}(x).\]
So $\Delta_{\lambda,\mu}$ is the coproduct $\Delta_\epsilon$ defined earlier.
\end{prop}

\begin{proof}
We know that for any $x\in \hrts$,
\[\Delta \circ B^+_\omega(x)=B^+_\omega(x)\otimes 1+(\id_{\hrts}\otimes B^+_\omega)\circ\Delta(x).\]
Therefore,
\[\Delta^{(2)}\circ B^+_\omega(x)=B^+_\omega(x)\otimes 1\otimes 1+(\id_{\hrts}\otimes B^+_\omega)\circ\Delta(x) \otimes 1
+(\id_{\hrts}\otimes\id_{\hrts}\otimes B^+_\omega)\circ\Delta(x),\]
which gives
\[\Delta_f\circ B^+_\omega(x)=0+(\id_{\hrts}\otimes f\circ B^+_\omega)\circ\Delta(x) \otimes 1+(\id_{\hrts}\otimes B^+_\omega)\circ \Delta_f(x).\]
Moreover, for any $F\in \hrts$,
\begin{align*}
	f\circ B^+_\omega(F)&=\begin{cases}
		1\mbox{ if }|V(B^+_\omega(F))|=1,\\
		0\mbox{ otherwise}
	\end{cases}\\
	&=\begin{cases}
		1\mbox{ if }F=1,\\
		0\mbox{ otherwise}
	\end{cases}\\
	&=\varepsilon_\Delta(F).
\end{align*}
So $f\circ B^+_\omega=\varepsilon_\Delta$, which implies that
\[\Delta_f\circ B^+_\omega(x)=x \otimes 1+(\id_{\hrts}\otimes B^+_\omega)\circ \Delta_f(x).\]
Finally,
\begin{align*}
	\Delta_{\lambda,\mu}\circ B^+_\omega(x)&=
	-\lambda \Delta \circ B^+_\omega(x)+\mu \Delta_f\circ B^+_\omega(x)\\
	&=-\lambda B^+_\omega(x)\otimes 1+\lambda (\id_{\hrts}\otimes B^+_\omega)\circ \Delta(x)+\mu x\otimes 1+\mu(\id_{\hrts}\otimes B^+_\omega)\circ \Delta_f(x)\\
	&=-\lambda B^+_\omega(x)\otimes 1++\mu x\otimes 1+(\id_{\hrts}\otimes B^+_\omega)\circ \Delta_{\lambda,\mu}(x). \qedhere
	\end{align*}\end{proof}

	-------------------------------------------------------